\documentclass[11pt,letterpaper]{article}

\usepackage[left=28mm, right=28mm, top=30mm, bottom=35mm]{geometry}
\usepackage{amsmath,amsfonts,amsthm,amssymb}
\usepackage[abbrev,lite,nobysame]{amsrefs}
\usepackage{mathtools}

\usepackage{enumerate}

\mathtoolsset{showonlyrefs}
\usepackage{tikz,tikzscale}
\usepackage{hyperref,color}
\usepackage[capitalize]{cleveref}
\usepackage{aliascnt}

\newtheorem{theorem}{Theorem}[section]
\numberwithin{equation}{section}

\newaliascnt{proposition}{theorem}
\newaliascnt{lemma}{theorem}
\newaliascnt{corollary}{theorem}
\newaliascnt{remark}{theorem}

\newtheorem{proposition}[proposition]{Proposition}
\newtheorem{lemma}[lemma]{Lemma}
\newtheorem{corollary}[corollary]{Corollary}
\newtheorem{remark}[remark]{Remark}

\aliascntresetthe{proposition}
\aliascntresetthe{lemma}
\aliascntresetthe{corollary}
\aliascntresetthe{remark}

\crefname{appendix}{Appendix}{Appendices}

\newcommand{\mf}{\mathfrak}  

\newcommand{\R}{\mathbb{R}}
\newcommand{\C}{\mathbb{C}}
\newcommand{\Z}{\mathbb{Z}}
\newcommand{\X}{\mathbb{X}}
\newcommand{\N}{\mathbb{N}}
\newcommand{\T}{\mathbb{T}}
\renewcommand{\P}{\mathbb{P}}
\newcommand{\Q}{\mathbb{Q}}
\newcommand{\J}{\mathbb{J}}
\newcommand{\I}{\mathbb{I}}

\newcommand{\cA}{\mathcal A}
\newcommand{\cS}{\mathcal S}
\newcommand{\cL}{\mathcal L}
\newcommand{\cT}{\mathcal T}
\newcommand{\cK}{\mathcal K}
\newcommand{\cJ}{\mathcal J}
\newcommand{\cH}{\mathcal H}
\newcommand{\cR}{\mathcal R}
\newcommand{\cX}{\mathcal X}
\newcommand{\cY}{\mathcal Y}
\newcommand{\cO}{\mathcal O}
\newcommand{\cB}{\mathcal B}
\newcommand{\cF}{\mathcal F}
\newcommand{\HX}{\cH _\cX}

\newcommand{\e}{\varepsilon}
\newcommand{\om}{\omega}

\newcommand{\cLe}{\cL _\e}
\newcommand{\cTe}{\cT _\e}
\newcommand{\cKe}{\cK _\e}

\newcommand{\cHe}{\cH _\e}
\newcommand{\ue}{u _\e}
\newcommand{\ome}{\omega _\e}
\newcommand{\fe}{\phi _\e}
\newcommand{\vfe}{\varrho _\e}
\newcommand{\ye}{\psi _\e}
\newcommand{\Fe}{F _\e}
\newcommand{\Ae}{A _\e}
\newcommand{\Be}{B _\e}
\newcommand{\Xe}{X _\e}
\newcommand{\Ye}{Y _\e}
\newcommand{\Phie}{\Phi _\e}
\newcommand{\Pe}{\P _\e}
\newcommand{\Qe}{\Q _\e}
\newcommand{\lae}{\lambda _\e}

\newcommand{\unu}{u ^\nu}
\newcommand{\omnu}{\omega ^\nu}
\newcommand{\pnu}{p ^\nu}

\newcommand{\uE}{u ^{\mathrm E}}
\newcommand{\omE}{\omega ^{\mathrm E}}
\newcommand{\pE}{p ^{\mathrm E}}
\newcommand{\yE}{\psi ^{\mathrm E}}
\newcommand{\CE}{C ^{\mathrm E}}
\newcommand{\cE}{c ^{\mathrm E}}
\newcommand{\dE}{\delta ^{\mathrm E}}

\renewcommand{\d}{\mathop{\kern0pt\text{\upshape d}}\!{}}
\renewcommand{\div}{\operatorname{div}}
\DeclareMathOperator{\curl}{curl}
\DeclareMathOperator{\Lip}{Lip}
\newcommand{\expo}[1]{\operatorname e^{#1}}
\renewcommand{\Re}{\operatorname{Re}}

\title{Fast separation of Euler and Navier--Stokes flows in~two~dimensions}

\author{Siming He\thanks{Department of Mathematics, University of South Carolina. Email: \url{siming@mailbox.sc.edu}} , 
Yupei Huang\thanks{Department of Mathematics, Imperial College London. Email: \url{yhuang9@ic.ac.uk}} , 
Jincheng Yang\thanks{Department of Applied Mathematics and Statistics, Johns Hopkins University. Email: \url{jincheng@jhu.edu}}}

\date{} 
\begin{document}

\maketitle

\begin{abstract}
We study the long-time validity of the inviscid limit problem by measuring when the two-dimensional ideal fluid ceases to accurately approximate a slightly viscous fluid with the same initial datum. We construct a stationary solution of the Euler equation from which the Navier--Stokes solution separates at a time scale proportional to the logarithm of the inverse viscosity, achieving the fastest separation from Euler steady states.
\end{abstract}

\medskip
\noindent
\textbf{2020 Mathematics Subject Classification: } 35Q30, 35Q31, 76D09, 76E30.

\noindent
\textbf{Key Words: } Euler equation, Navier--Stokes equation, Vanishing viscosity limit.
\par
\par\medskip

\tableofcontents

\section{Introduction}\label{sec:Introduction} 
The 2D Navier--Stokes equation is a fundamental fluid model to describe the motion of a viscous fluid. On the torus $\T ^2$, the equation takes the form  
\begin{equation}
    \label{eq:ns}
    \partial_t \unu+\unu\cdot\nabla \unu+\nabla \pnu
    =\nu\Delta \unu,\quad 
    \div \unu=0,\quad
    \unu(0,x)=f(x),
\end{equation}
where $\unu$ is the velocity, $\pnu$ is the pressure, and $\nu$ is the kinematic viscosity. The model reflects the interaction between nonlinear advection and viscous diffusion. The corresponding inviscid model, obtained by setting the kinematic viscosity to zero, is the incompressible Euler equation:
\begin{equation}
    \label{eq:euler}
    \partial_t \uE+\uE\cdot\nabla \uE+\nabla \pE=0,\quad
    \div \uE=0,\quad 
    \uE(0,x)=f(x).
\end{equation}
When the viscous effect is negligible, physicists treat the incompressible Euler equations as an idealized approximation to the Navier--Stokes dynamics; see, e.g., \cites{Batchelor1967,Acheson1990,LandauLifshitz1987}. A fundamental mathematical question is to characterize the functional framework and time scales on which the Navier--Stokes solutions $\unu$ are well-approximated by the Euler flow $\uE$ as the kinematic viscosity approaches zero. In particular, we are interested in whether a fixed smooth Euler steady state can generate a fixed positive $L^2$ discrepancy on a time scale of order $\log(\nu^{-1})$ when used as the common initial velocity for \eqref{eq:ns} and \eqref{eq:euler}.

\subsection{The validity of inviscid approximation}

Over a fixed time horizon, inviscid limit is valid for sufficiently regular initial data. In the work of \cites{Constantin1988navier,Masmoudi2007remarks}, by assuming the initial data is the smooth function $f$, the authors establish that the Navier--Stokes solution $\unu$ converges to the Euler solution $\uE$ in the smooth category on any $\nu$-independent finite time interval. These inviscid limit results have also been extended to less regular initial data. When the initial vorticity  lies in the Yudovich class, Constantin, Drivas, and Elgindi \cite{ConstantinDrivasElgindi2022} proved strong convergence of the velocity on $\T ^2$ in every finite $L^p$ space, uniformly on fixed finite time intervals.  

Besides the inviscid limit problem in fixed time, people also  derived qualitative bound for velocity difference global in time.  In the whole-space setting, Chemin \cite{Chemin1996} established an $L^2$ velocity difference estimate in the Yudovich class, subsequently refined by Seis \cite{Seis2021}. Related velocity estimates on the flat torus are included in \cite{ConstantinDrivasElgindi2022}. Those works imply that on the time scale of $o (\log \log \nu^{-1})$, the solutions to the Navier--Stokes equation will converge uniformly to the solution of the Euler equation. Such results provide sufficient conditions for the solutions to remain close, but do not identify the time scale when a
fixed positive discrepancy actually develops.

Our aim is to characterize how viscosity and the structure of the initial datum affect the separation time scale
\begin{equation}\label{eq:sep-time}
    T^{\delta} _\nu(f) := \inf \left\{ t \ge 0: \| \unu(t)-\uE(t) \|_{L ^2 (\T ^2)} \geq \delta \| f \| _{L ^2 (\T ^2)} \right\}.
\end{equation}  
Here $\delta > 0$ is a chosen separation level. Due to the dissipation effect, it is clear that in the diffusive time scale, which is proportional to $\nu^{-1}$, the solution of the Navier--Stokes equation will separate from the corresponding Euler solution. As a matter of fact, if the initial data is a shear flow or the stream function corresponds to an eigenfunction for the Laplacian on the torus, the Navier--Stokes equation behaves exactly as the heat equation. In this case, the separation time scale is exactly proportional to the diffusive time scale. As we mentioned before, the work \cite{ConstantinDrivasElgindi2022} demonstrates the separation time scale should be at least the size of $\cO (\log \log \nu^{-1})$. To date, there is no explicit example in which the separation time scale is substantially shorter than the diffusion time scale $\nu ^{-1}$, let alone one that attains the $\cO (\log \log \nu^{-1})$ lower bound. 

It is an important task to construct examples exhibiting a fast Euler-Navier--Stokes separation. In the vanishing viscosity regime, this mission is naturally linked to various 2D Euler instabilities   \cites{KiselevSverak2014,FriedlanderStraussVishik1997,FriedlanderStraussVishik1999}. Our goal is to demonstrate that the instability of Euler steady states could trigger the desired fast Euler-Navier--Stokes separation. To this end, we present the necessary background in the next subsection.

\subsection{A fast separation scenario}
In this section, we introduce the  separation mechanism. Let us consider the vorticity form of Navier--Stokes and Euler equation, which govern the dynamics of $\omnu : = \nabla ^\perp \cdot \unu=- \partial_y \unu _1 + \partial_{x} \unu _2$ and $\omE := \nabla ^\perp \cdot \uE,$
\begin{align}
    \label{eq:NS_vor}
    \partial _t \omnu + \unu \cdot \nabla \omnu &= \nu \Delta \omnu, &
    \partial_t \omE+\uE\cdot\nabla \omE &= 0. 
\end{align}
Throughout this paper, we consider  steady state $\uE$ of the Euler equation: 
\begin{align}
    \label{eq:steady-Euler}
    \uE \cdot \nabla \omE = 0.
\end{align}
To study the separation time $T_\nu^\delta$ \eqref{eq:sep-time}, it is natural to consider the velocity and vorticity differences $v := \unu - \uE$, $w := \omnu - \omE = \nabla ^\perp \cdot v$, governed by the dynamics
\begin{equation}\label{eq:w_cpt}
    \begin{split}&\partial _t w-\cL w  = \nu \Delta w+ \nu \Delta \omE - \curl^{-1} w \cdot \nabla w, \quad w(t=0)\equiv 0,\\
    &\cL w:= \cT w + \cK w := - \uE \cdot \nabla w - \nabla \omE \cdot \curl^{-1} w.
    \end{split}
\end{equation}
Here $\curl ^{-1} := \nabla ^\perp \Delta ^{-1}$ is the Biot--Savart operator and $\cL$ is the linearized Euler operator.

The static external force $\nu\Delta\omE$ drives the growth in \eqref{eq:w_cpt}. To maximize the growth rate, we seek a scenario where this forcing triggers the most unstable mode of $\cL$. An extensive literature uses the most unstable modes of $\cL$ to study the nonlinear instability of the 2D Euler equations; see, for example, \cites{BelenkayaFriedlanderYudovich1999,Lin2003,bardos2002stable,lin2004nonlinear,lin2013unstable,Vishik2018PartI,Vishik2018PartII,AlbrittonEtAl2024Vishik}. The classical strategy is organized in two stages. In the first stage, one identifies an Euler steady state whose linearized operator has an unstable mode. In the second stage, one introduces carefully tuned initial perturbation or forcing that trigger the unstable mode of the \emph{given} linearized Euler operator and invokes the so-called bootstrap lemma to close the nonlinear argument. In this approach, the choice of the perturbation and that of the underlying Euler steady state are \emph{decoupled}. 

In our separation scenario, the forcing $\nu \Delta \omE$ and the Euler steady state $\uE$ are \emph{coupled}. Hence, the existence of an unstable mode alone is insufficient for fast separation, because we also need to ensure that the forcing has a nontrivial spectral projection onto the unstable modes. It turns out that for most steady states in the literature, the corresponding forcing $\nu \Delta \omE$ has a trivial projection onto the unstable modes. A major challenge is to design a steady state and a function space in which the forcing has a nontrivial projection. Our innovation is to introduce a symmetry class preserved by the Euler/Navier--Stokes evolution, within which we find a steady state for which the forcing is guaranteed to have a nontrivial projection onto the unstable modes.

\subsection{Main result}
We define a symmetry class
\begin{align}
    \label{def:X}
    \cX &= \left\{ 
        f \in L ^2 _0 (\T ^2) \mid f (x, y) = f (-x, -y) = f \left(x + \frac{2 \pi} 3, y\right) = f \left( \frac{2 \pi}{15} - x, \frac{2 \pi}{5} - y \right)
    \right\}.
\end{align}
Here $L ^2 _0 (\T ^2)$ represents the space of $L ^2$ function with mean zero. An illustration of this class is given in \cref{fig:symmetry}.
A function $f \in \cX$ has the following Fourier series expansion:
\begin{align}
    \label{eqn: fourier}
    f = \sum _{\substack{j \ge 0 \\ n \in \Z \\ 5 \mid j + n}} c _{j, n} \cos (3 j x + n y), \qquad \sum _{j, n} |c _{j, n}| ^2 < \infty.
\end{align}
To avoid repetition, we enforce $c _{0, n} = 0$ for $n < 0$, and mean zero gives $c_{0,0}=0$.

\begin{theorem}
    \label{thm:main}
    There exists a non-shear stationary Euler solution $\uE$ near $(5 \sin{5y}, 0) ^\top$ in the smooth category such that the vorticity $\nabla^{\perp}\cdot\uE $ is in the symmetry class $ \cX$. Moreover, there exist $\uE$-dependent parameters $\CE \geq 1$, $\dE, \cE \in (0, \expo{-1})$ such that for all $0<\delta\leq \dE$ and all $0<\nu \le \cE \delta$, the following estimate holds 
    \begin{align*}
        T ^\delta _\nu (\uE) \le \CE \log \frac \delta \nu.
    \end{align*}
\end{theorem}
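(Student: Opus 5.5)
The proof splits into two parts: a construction of the steady state inside the symmetry class $\cX$, and a quantitative instability argument for the difference equation \eqref{eq:w_cpt}.

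\textbf{Constructing the steady state.} I would look for a perturbation of the Kolmogorov-type shear $(5\sin 5y,0)^\top$, whose vorticity $-25\cos 5y$ lies in $\cX$ (here $j=0$, $n=5$, and $5\mid 5$). The perturbation would be built by bifurcation on the torus within $\cX$. The natural ansatz is a stream function $\psi$ solving a semilinear elliptic equation $\Delta\psi = F(\psi)$ with $F$ close to linear, say $F(\psi)=-25\psi+\varepsilon g(\psi)$ or a Crandall--Rabinowitz bifurcation in the parameter. One then adds a small component along a non-shear mode in $\cX$, for example $\cos(3x+2y)$ or $\cos(3x-2y)$ with $|k|^2=13$. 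Since $\uE\cdot\nabla\omE=0$ holds automatically when $\omE=F(\psi)$, the resulting flow is stationary. Smoothness follows from elliptic regularity, and membership in $\cX$ follows because the equation commutes with the symmetries defining $\cX$, so one may work in the invariant subspace throughout.

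\textbf{Spectral input.} Next I would show that $\cL$ restricted to $\cX$, which is invariant under $\cL$ because $\omE\in\cX$, has an unstable eigenvalue $\lambda$ with $\Re\lambda>0$ and $\Re\lambda$ maximal among spectral points. The key additional requirement is that the Riesz projection $P$ of the forcing $\Delta\omE$ onto the corresponding eigenspace is nonzero.

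For the unperturbed shear, the unstable modes of $-u\cdot\nabla-\nabla\omega\cdot\curl^{-1}$ are known from Meshalkin--Sinai-type analysis: $\sin 5y$ on a torus allowing long $x$-wavelengths, and here period $2\pi/3$ in $x$ is used. I would track these eigenvalues under the small perturbation by analytic perturbation theory. The essential spectrum of $\cT$ sits on the imaginary axis in $L^2$, so isolated eigenvalues with positive real part persist.

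Nonvanishing of $P\Delta\omE$ is the \emph{main obstacle}. For the pure shear, $\Delta\omE$ is itself a shear mode and projects trivially. I would therefore compute the projection to first order in the bifurcation parameter $\varepsilon$: expand $\Delta\omE=\Delta\omega_0+\varepsilon\Delta\omega_1+\dots$ and pair against the adjoint eigenfunction. The symmetry class is designed so that the non-shear part $\omega_1$ shares Fourier support with the unstable eigenfunction. The resulting explicit leading coefficient should be nonzero, and it has to be verified by a finite Fourier computation.

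\textbf{Nonlinear growth.} Write $w = \nu\int_0^t e^{(t-s)(\cL+\nu\Delta)}\Delta\omE\,ds + (\text{nonlinear})$. Using a semigroup bound $\|e^{t\cL}(I-P)\|\le C e^{(\Re\lambda-\eta)t}$ for the remaining part and $\nu\Delta$ as a small perturbation on this time scale, the linear part has $P$-component of size comparable to $\nu e^{\Re\lambda t}|P\Delta\omE|/\Re\lambda$. The semigroup bound holds in the velocity/$H^{-1}$ norm; if it is not available directly, I would use a Vishik-type growth bound that holds on $\cX$.

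I would then run a bootstrap in the style of Grenier, controlling $\|w\|_{H^s}\lesssim \nu e^{\Re\lambda t}$ (with a slightly bigger rate in higher norms to absorb loss of derivatives) until time $T$ with $\nu e^{\Re\lambda T}\sim\delta$. The quadratic term contributes $O(\nu^2e^{2\Re\lambda t})\ll \nu e^{\Re\lambda t}$ while $\nu e^{\Re\lambda t}\le \dE$. Converting the lower bound on $\|Pw\|$ to a lower bound on $\|v\|_{L^2}$ gives $T^\delta_\nu\le \CE\log(\delta/\nu)$, with $\CE\approx 1/\Re\lambda$ plus constants. The restriction $\nu\le\cE\delta$ guarantees that the logarithm is positive and large enough for the unstable mode to dominate the transient.
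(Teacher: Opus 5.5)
The step you single out as the main obstacle fails as planned. For every steady state of the form you propose ($\ome=\Fe(\ye)$ with $\Fe(s)=-25s+\e F_1(s)+\cO(\e^2)$ and $\ye=\cos5y+\cO(\e)$), the projection of $\Delta\ome$ onto the unstable mode vanishes \emph{identically} at first order in $\e$. So no first-order Fourier computation can give a nonzero coefficient.

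To see this, use that $\lambda_\e$ is real and $\cLe^*\fe^*=\lambda_\e\fe^*$, so $(\Delta\ome,\fe^*)=\lambda_\e^{-1}(\cLe\Delta\ome,\fe^*)$. A direct computation gives $\cLe\Delta\ome=-\Fe''(\ye)\,\ue\cdot\nabla|\ue|^2$. Here $\Fe''=\cO(\e)$, and $\ue\cdot\nabla|\ue|^2=\cO(\e)$ because $|u_0|^2$ is constant along the streamlines of the shear. Hence the pairing is $\cO(\e^2)$.

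In particular, the term $(\Delta\omega_1,\phi_0^*)$ you intend to compute is zero. The non-shear part of $\Delta\omega_1$ lies in $\ker(\Delta+25)\cap\cX=\operatorname{Span}\{\cos5y,\cos(3x+4y)\}$, which $\cH_0$ annihilates, so it is orthogonal to $\phi_0^*=\cH_0\phi_0^-$. That $\cos(3x+4y)$ lies in the Fourier support of $\phi_0$ is beside the point.

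The missing idea is to use this identity to go to second order. First, $\lim_{\e\to0}\e^{-2}\cLe\Delta\ome$ is proportional to $F_1''(\psi_0)$, so $g$ must be genuinely nonlinear. The paper inserts $\e T_2$ into $\Fe$ for exactly this purpose; an affine $F$ kills the projection exactly. Second, after integrating by parts and using $\cJ_0\phi_0^*=-\lambda_0\phi_0^-$, the pairing with $\phi_0^*$ reduces to the $\cos(3x-y)$ coefficient of $\phi_0^-$. That mode is the unique negative direction of $\cH_0$, and the coefficient is nonzero because $c_{-1}\neq0$. This yields $\|\P_{\lambda_\e}\Delta\ome\|_{L^2}\ge\kappa\e^2$.

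Several further points also need repair.
\begin{itemize}
\item \textbf{Bifurcation direction.} The modes $\cos(3x\pm2y)$ are not in $\cX$, since $\cos(3jx+ny)\in\cX$ requires $5\mid j+n$ and $5\nmid 1\pm2$. Nor are they in $\ker(\Delta+25)$, so they cannot be bifurcation directions from $\cos5y$ with $F\approx-25s$. The only admissible direction is $\cos(3x+4y)$.
\item \textbf{Two-dimensional kernel.} Because the kernel above is two-dimensional, a one-parameter Crandall--Rabinowitz argument does not apply as such. The paper meets the two solvability conditions by tuning two coefficients $\Ae,\Be$ of $\Fe$ in a fixed-point scheme.
\item \textbf{Simplicity of the unstable eigenvalue.} You need it to be simple and the only spectral point with large real part in $\cX$. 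The paper obtains this from the Lin--Zeng index formula with $n^-(\cH_0)=1$.
\item \textbf{Perturbation of the eigenvalue.} Analytic perturbation theory does not apply directly, because $\cLe-\cL_0$ contains $(\ue-u_0)\cdot\nabla$, which is not relatively bounded by $\cL_0$. The paper instead uses smallness of $\cLe-\cL_0$ from $\cX^{k+1}$ to $\cX^k$ together with compactness of $\cR_z(\cTe)\cKe$.
\item \textbf{Nonlinear step.} Neither $\nu\Delta$ nor the quadratic term can be treated perturbatively in a Duhamel bootstrap of $\|w\|_{H^s}$ without losing derivatives. The paper uses a Grenier expansion $\omnu=\sum_{k<m}\nu^k\omega_k+\tilde\omega$. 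In it, $\Delta\omega_{k-1}$ forces $\omega_k$, which is bounded in $H^{2(m+1-k)}$ thanks to $H^k$ semigroup bounds that rely on the small Lyapunov exponent $\Lambda_\e\lesssim\sqrt\e$. It then closes with a pure $L^2$ energy estimate for $\tilde\omega$, choosing $m$ so that $(m-2)\lambda\ge4\|\nabla\omE\|_{L^\infty}$.
\end{itemize}
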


\begin{figure}[!h]
    \centering
    \begin{tikzpicture}[scale=2.5]
        \node (0, 0) {\includegraphics[width=0.3155\textwidth]{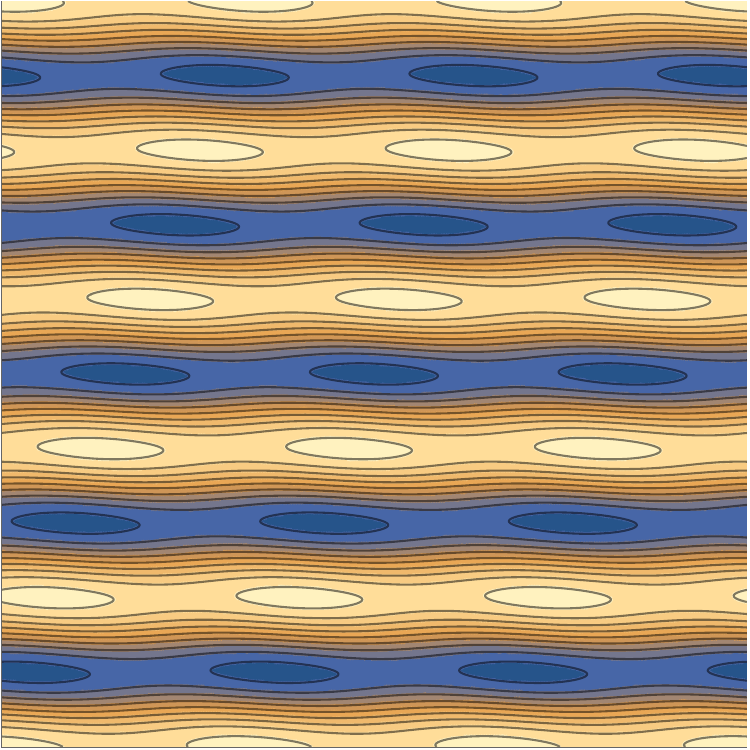}};
        \draw[red, thick] (-1, -1) -- (-1, 1) -- (1, 1) -- (1, -1) -- (-1, -1);
        \draw[red, thick] (-1/3, -1) -- (-1/3, 1);
        \draw[red, thick] (1/3, -1) -- (1/3, 1);
        \draw[green,ultra thick,dashed] (0, -1) -- (-4/3, -1) -- (-2/3, 1) -- (0, 1) -- (-2/3, -1);
        \draw[green,ultra thick,dashed] (0, 1) -- (4/3, 1) -- (2/3, -1) -- (0, -1) -- (2/3, 1);
        \fill[blue, opacity=0.4] (-2/3, -1) -- (-1.6/3, -0.6) -- (0.4/3, -0.6) -- (0, -1);

        \fill (-1/3, -1) circle (0.02) node[below] {$O$};
        \fill[white] (-0.8/3, -0.8) circle (0.02);
    \end{tikzpicture}
    \caption{the vorticity $\omE$ and the symmetry class of a function $f \in \cX$. Each red rectangle is $\frac{2\pi}3$ by $2 \pi$, and the periodicity condition in $x$ requires that $f$ repeats itself in each green dashed parallelogram. Central symmetry around the origin $O$ enforces the graph inside each parallelogram to be centrally symmetric, and central symmetry around $(\frac\pi{15}, \frac\pi5)$ (marked by the white dot) implies $f$ is centrally symmetric in the shadowed blue parallelogram, obtained by dividing the green one evenly into five pieces; and $f$ repeats itself in these pieces. The background contour plot is our construction of the vorticity $\omE$.}
    \label{fig:symmetry}
\end{figure}

\begin{remark}
     We prove in the Appendix \ref{sec:lower-bound} for every smooth stationary Euler flow $\uE$, there exist constants $\cE$, $\CE$ depending on $\uE$ with
    \begin{align}
        T _\nu ^\delta (\uE) \ge \cE \log \left(\frac{\delta^2}\nu\right) - \CE,
    \end{align}
    hence the separation time scale for the state $\uE$ in Theorem \ref{thm:main} is exactly the same order as $\log (1/\nu)$.
\end{remark}

\begin{remark}
    
    In the case of a bounded domain, the unconditional inviscid limit is valid for the Navier boundary condition \cites{filho2005,Kelliher2006Navier,Lions1996}, but is in general unknown for the no-slip boundary condition \cites{Bardos2013,maekawa2018}, with possible exceptions including additional assumptions on energy dissipation \cites{Kato1984,Kelliher2007Kato}, analyticity \cites{SammartinoCaflisch1998II}, or symmetry \cites{BonaWu2002,MazzucatoTaylor2008,GieEtAl2019}. See also \cites{CaflischSammartino2006,ConstantinKukavicaVicol2015,GerardVaretMaekawaMasmoudi2018,GerardVaretNguyen2012,GrenierGuoNguyen2016,KukavicaVicolWang2017,LacaveMazzucato2016,Maekawa2014,BedrossianHeIyerWang2024} and the references therein. Nevertheless, it is still possible to establish a lower bound on the separation time. Following the separation estimate in \cites{vasseur2023,vasseur2024}, one can show that $T _\nu ^\delta (\uE) \ge \delta ^2 / C \| \uE \| _{L^\infty}$ as $\nu \to 0$ in periodic channels, and a similar lower bound for a general smooth domain.
\end{remark}

In the remainder of the introduction, we present our general non-shear construction and motivate the introduction of the symmetry class $\cX$. We start by choosing a shear flow for which we can explicitly calculate the most unstable modes. The sinusoidal flow serves as a natural candidate. The results in \cites{MeshalkinSinai1961,BelenkayaFriedlanderYudovich1999} enable us to apply Fourier expansions and continued-fraction techniques to describe the most unstable modes. One can hope to construct non-shear steady states near this sinusoidal flow based on perturbation methods detailed in \cites{coti2023stationary,lin2011inviscid,elgindi2026flexibility}. For instance, we can get a family of non-shear steady states near $(5 \sin{5y}, 0) ^\top$ by rescaling the steady states in \cite{coti2023stationary} near the Kolmogorov flow. However, one can check that the forcing $\nu\Delta\omE$ has trivial projection onto the most unstable mode (see \cref{rem:rescale-bad}). Similarly, other methods fail to provide the desired steady states directly. In contrast, if we apply the method in \cite{coti2023stationary} in the smaller symmetry class $\cX\subsetneq L^2$, the most unstable mode is fundamentally different, and the forcing has nontrivial projection onto this mode.

Our paper is organized as follows. In \cref{sec:Preliminary}, we outline the proof strategy and state the main steps as propositions that lead to the proof of \cref{thm:main} at the end of the section. In \cref{sec:kolmogorov}, we perform a spectral analysis of the sinusoidal shear flow within the symmetry class $\cX$. An explicit stationary solution $\uE$ is constructed in \cref{sec:stationary_Euler}, where we then analyze the associated linearized Euler operator and spectral projection. Finally, we complete the energy estimates by proving the bootstrap lemma in \cref{sec:Grenier}. Auxiliary lemmas are collected in the appendix.

\section{Outline}\label{sec:Preliminary}
Our analysis has two main components.
\begin{itemize}
     \item We perform spectral analysis for the linearized Euler equation at a non-shear steady state. In this part, we construct a smooth non-shear steady state $\uE$ near the sinusoidal flow, prove the static forcing $\nu \Delta \omE $ has non-trivial projection to the most unstable direction of $\cL$ and we can prove $\cO (1)$ separation in the time scale $\cO (\frac1{\Re \lambda} \log \frac1{\nu})$ if we drop the nonlinear term and the viscous term.    
     \item To generalize the linear separation result to the nonlinear separation result, we adapt a now-classical argument of Grenier \cite{grenier2000nonlinear} to decompose the nonlinear solution to \eqref{eq:w_cpt} as a finite sum of explicit ansatz together with a remainder. In contrast to the classical approach, we will not introduce an initial perturbation to the system, and the growth is purely driven by the Riesz projection of the force $\nu \Delta \omE$. By carefully analyzing the Grenier decomposition, the final separation estimate can be obtained. As a side comment, the viscous error $\nu \Delta w$ in \eqref{eq:w_cpt} can be controlled in a straightforward fashion by decomposing it into structured components and a controllable viscous remainder.  
\end{itemize} 

To start with, we consider special Euler steady states, which are given by semilinear elliptic equations. Consider the vorticity $ \omE$ whose associated stream function $\yE$ satisfies the equation
\begin{align}
    \label{eqn:semilinear}
    \Delta \yE = \omE = F (\yE)
\end{align}
for some $C^1$ function $F$ with nonvanishing derivative. It is standard to check that
\begin{align}\label{semi_rel_1}
    \nabla  \omE = \nabla  F (\yE) = F' (\yE) \nabla \yE \quad \leadsto \quad \uE \cdot \nabla \omE= \nabla ^\perp \yE \cdot F'(\yE) \nabla \yE \equiv 0,
\end{align}
and the steady Euler equation \eqref{eq:steady-Euler} is solved. 
 
\subsection{Spectral analysis of the linearized Euler equation near steady states}
Our argument will center around the Riesz projections $\P _\lambda$ of the linearized Euler operator $\cL$. Let $\lambda$ be an isolated point in the spectrum of $\cL$ defined in $\cX$, endowed with $L ^2$ inner product $(\cdot, \cdot)$. The Riesz projection is defined as
\begin{align}
    \label{eqn:riesz}
    \P _\lambda f := \frac1{2 \pi i} \oint _{\Gamma _\lambda} \cR _z (\cL)f \d z = \frac1{2 \pi i} \oint _{\Gamma _\lambda} (z - \cL) ^{-1}f \d z,
\end{align}
where $\Gamma _\lambda$ is a positively oriented simple closed contour around $\lambda$ and nothing else in the spectrum of $\cL$, and $\cR _z (\cL)$ is known as the resolvent. The following lemma marks the starting point of our discussion.
    
\begin{lemma}[Riesz projection]
    \label{lem:riesz}
    Suppose $\yE$ satisfies \eqref{eqn:semilinear} with $\omE:=\Delta \yE \in \cX$, where $F \in C ^1$ has a nonvanishing derivative. Then $\cL$ in \eqref{eq:w_cpt} is a densely defined closed operator on $\cX$ \eqref{def:X}. There exist a skew-adjoint operator $\cJ$ $(\cJ^*=-\cJ)$ and a bounded self-adjoint operator $\HX$ $(\HX^*=\HX)$ such that $\cL$ can be factorized as $\cL = \cJ \HX$ on the domain $D (\cL) \subset \cX$. If $\lambda \ne 0$ is an isolated eigenvalue of $\cL$ with algebraic multiplicity one and associated with an eigenvector $\phi$, then the following assertions hold:
    \begin{itemize}
    \item The $\bar \lambda$ is an algebraically simple eigenvalue of the adjoint operator $\cL ^*$, associated with an eigenfunction $\phi ^*$, and the Riesz projection \eqref{eqn:riesz} is equal to 
    \begin{align}
        \label{eqn:riesz-equiv}
        \P _\lambda f = \frac{(f, \phi ^*)}{(\phi, \phi ^*)} \phi.
    \end{align}
    
    \item 
    Further assume that the $-\bar \lambda$ is another algebraically simple eigenvalue of $\cL$. Let $\phi^-$ be the associated eigenfunction, then one can take $\phi^*=\HX\phi^-$ in \eqref{eqn:riesz-equiv}.
    \end{itemize}
\end{lemma}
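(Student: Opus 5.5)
The plan is to write the linearization in Hamiltonian form and then apply standard spectral theory for isolated eigenvalues of finite algebraic multiplicity.

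\textbf{Factorization.} From \eqref{semi_rel_1}, $\nabla \omE = F'(\yE)\nabla\yE$ and $\uE = \nabla^\perp \yE$. Writing $\psi := \Delta^{-1}w$ and $\curl^{-1} w = \nabla^\perp \psi$, we get
\begin{align}
    \cL w = -\nabla^\perp\yE\cdot\nabla w - F'(\yE)\nabla\yE\cdot\nabla^\perp\psi .
\end{align}
Since $\nabla\yE\cdot\nabla^\perp\psi = -\nabla^\perp\yE\cdot\nabla\psi$, this becomes
\begin{align}
    \cL w = -\nabla^\perp\yE\cdot\nabla\bigl(w - F'(\yE)\psi\bigr).
\end{align}
Here I used that $\nabla^\perp\yE\cdot\nabla F'(\yE) = 0$, which holds formally and, for $F\in C^1$ only, should be understood via the chain rule along level sets. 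I therefore set
\begin{align}
    \cJ g := -\nabla^\perp\yE\cdot\nabla\Bigl(\frac{g}{F'(\yE)}\Bigr)\cdot(\text{correction}),
\end{align}
or, more cleanly, $\cJ g := -\uE\cdot\nabla(F'(\yE) g)$ together with $\HX w := w/F'(\yE) - \Delta^{-1} w$, with $\Delta^{-1}$ taken on mean-zero functions. Then $\cJ\HX w = -\uE\cdot\nabla\bigl(w - F'(\yE)\Delta^{-1}w\bigr) = \cL w$.

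\textbf{Properties of $\cJ$, $\HX$, and $\cL$.} $\HX$ is bounded and self-adjoint because $1/F'(\yE)$ is a bounded real multiplier, since $F'$ is continuous and nonvanishing on the compact range of $\yE$, and $\Delta^{-1}$ is self-adjoint. $\cJ = -M\,\uE\cdot\nabla\,M$ with $M = F'(\yE)$ is skew-adjoint, because $\uE\cdot\nabla$ is skew for divergence-free $\uE$ and $M$ commutes with it (as $\uE\cdot\nabla M = 0$). Writing $\cJ = -M(\uE\cdot\nabla)M$ avoids the commutation issue altogether.

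I must also check that $\cX$ is invariant under these operators. The symmetries defining $\cX$ are isometries of $\T^2$ that preserve $\omE$, hence $\yE$ and $F'(\yE)$ are preserved as well. A point reflection preserves $\nabla^\perp$ up to the sign of both the gradient and the Jacobian, so $\uE\cdot\nabla$ commutes with the pullback. $\Delta^{-1}$ commutes with these isometries, so $\cJ$, $\HX$, and $\cL$ all map $\cX$ into $\cX$. Closedness and dense definition of $\cL$ follow because $\cL = \cT + \cK$, where $\cT$ is skew-adjoint on its maximal domain (a transport operator by a smooth divergence-free field) and $\cK$ is bounded, indeed compact.

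\textbf{First bullet.} For an isolated $\lambda$, the Riesz projection $\P_\lambda$ has range equal to the generalized eigenspace, which is $\mathrm{span}\,\phi$ by algebraic simplicity. Its adjoint is $\P_\lambda^* = \frac{1}{-2\pi i}\oint_{\overline{\Gamma_\lambda}} (\bar z - \cL^*)^{-1}\,\d\bar z$, which is the Riesz projection of $\cL^*$ at $\bar\lambda$ and has rank one. Hence $\bar\lambda$ is an algebraically simple eigenvalue of $\cL^*$ with eigenvector $\phi^*$ spanning the range of $\P_\lambda^*$. Writing $\P_\lambda f = \ell(f)\phi$, the identity $\P_\lambda\cL = \cL\P_\lambda$ gives $\ell(\cL f) = \lambda\ell(f)$, so $\ell = (\cdot,\eta)$ with $\cL^*\eta = \bar\lambda\eta$, i.e.\ $\eta = c\phi^*$. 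Idempotency $\ell(\phi) = 1$ then fixes $c$ and yields \eqref{eqn:riesz-equiv}. In particular $(\phi,\phi^*) \neq 0$.

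\textbf{Second bullet.} We have $\cL^* = \HX\cJ^* = -\HX\cJ$. If $\cJ\HX\phi^- = -\bar\lambda\phi^-$, then
\begin{align}
    \cL^*(\HX\phi^-) = -\HX\cJ\HX\phi^- = \bar\lambda\,\HX\phi^-.
\end{align}
The vector $\HX\phi^-$ is nonzero, since otherwise $\cL\phi^- = 0$, contradicting $\lambda \neq 0$. By the simplicity established in the first bullet, $\HX\phi^-$ is a multiple of $\phi^*$. Domain issues, namely that $\HX\phi^- \in D(\cL^*)$, follow from $D(\cL^*) = D(\cT)$ together with the identity above interpreted distributionally.

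\textbf{Main obstacle.} I expect the hardest part to be the rigorous functional-analytic details: the domain of the factorization when $F$ is only $C^1$, and the identification $D(\cL^*) = D(\cT^*)$ via bounded perturbation. These are standard but need care.
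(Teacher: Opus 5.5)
Your proof is correct and follows the paper's argument. You use the same Hamiltonian factorization (your $\cJ$ and $\HX$ are just the negatives of the paper's), Kato's identification of $\P_\lambda^*$ with the Riesz projection of $\cL^*$ at $\bar\lambda$, and the identity $\cL^*(\HX\phi^-)=-\HX\cJ\HX\phi^-=\bar\lambda\,\HX\phi^-$; deriving $\ell(\cL f)=\lambda\ell(f)$ from $\P_\lambda\cL\subset\cL\P_\lambda$ is only a cosmetic variant of the paper's step $(\P_\lambda f,\phi^*)=(f,\P_\lambda^*\phi^*)$. Two small slips to fix: $\HX$ needs the mean-zero projection $\P_{\ne 0}$ to map $\cX$ into $\cX$ (harmless, since $\cJ$ annihilates constants), and the sandwich $-F'(\yE)\,(\uE\cdot\nabla)\,F'(\yE)$ equals $F'(\yE)\cJ$ rather than $\cJ$, so it does not factorize $\cL$ and the invariance $\uE\cdot\nabla F'(\yE)=0$ cannot be bypassed.
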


The proof of \cref{lem:riesz} will be presented in Appendix \ref{sec:spectral_Euler}. Our first goal is to apply Lemma \ref{lem:riesz} to show that the Riesz projection of the forcing $\nu\Delta \omE$ \eqref{eq:w_cpt} onto the most unstable eigen-space of $\cL$ (with eigenvalue $\lambda$)  
is nontrivial. To this end, two conditions are essential: 
\begin{enumerate}[a)] 
    \item The eigenvalue $\lambda$ must be algebraically simple and isolated. 
    \label{item: simple}
    \item The inner product $(\Delta \omE, \HX \phi^-) = (\Delta \omE, \cH \phi^-)$ is nontrivial, where $\phi^-$ is a stable eigenfunction of $\cL$.
    \label{item: inner-prod}
\end{enumerate}

For the Euler steady states determined by the semilinear elliptic equation \eqref{eqn:semilinear}, the linearized operator $\cL$ has nice structures. The nonlocal operator $\cK$ in \eqref{eq:w_cpt} can be rephrased using \eqref{semi_rel_1},
\begin{align*}
    \cK g=-\nabla \omE \cdot \curl ^{-1} g = \nabla ^\perp \omE \cdot \nabla \Delta ^{-1}g = F' (\yE) \uE \cdot \nabla \Delta ^{-1}g.
\end{align*}
Since $F' (\yE)$ is invariant along the flow $\uE$, we observe that $\uE\cdot \nabla (F'(\yE))\equiv 0$, and  the operator $\cL$ in \eqref{eq:w_cpt} can be rephrased as a composition
\begin{equation}
\begin{aligned}
    \cL g & = -\uE \cdot \nabla g+ F' (\yE) \uE \cdot \nabla \Delta ^{-1} g
    = F' (\yE) \uE \cdot \nabla \left(
        -\frac g{F' (\yE)} + \Delta ^{-1} g 
    \right) = \cJ  \cH g=\cJ \HX g,\\
    \cJ &= F' (\yE) \uE \cdot \nabla, \qquad 
    \cH = -\frac1{F' (\yE)} + \Delta ^{-1},\qquad \HX = \P_{\ne0}\left[-\frac{1}{F'(\yE)}+\Delta^{-1}\right]\bigg|_{\cX}.
\end{aligned} \label{eqn:defn-J-H}
\end{equation}
Here $\cH: L ^2 (\T ^2) \to L ^2 (\T ^2)$ does not necessarily have mean-free range, so we apply the mean-free projection $\P _{\ne 0}$ to make $\HX: \cX \to \cX$ well-defined.

The establishment of conditions \ref{item: simple}) and \ref{item: inner-prod}) are intertwined and account for the most delicate part of our proof. To achieve conditions \ref{item: simple}) and \ref{item: inner-prod}), we restrict ourselves to the functional space that is symmetric about two points and invariant under a horizontal shift, given by \eqref{def:X}. The 
$\psi_0=\cos{5y}$ is an element in $\cX$, which is the stream function to a sinusoidal shear flow  \begin{equation}\label{Kolmogorov_flow}
    u _0 = \binom{5\sin 5y}{0}.
\end{equation}
We now define the associated linearized operator near $\omega_0:=-25\cos{5y}$ from $\cX$ to itself:
\begin{equation}
    \label{FLJH_0}
    \cL _0 =-125\sin{5y} \; \partial_x \circ \left(
        \frac{1}{25}+\Delta^{-1}
    \right) =: \cJ_0 \circ \cH_0.
\end{equation}
Note that $\cH = \HX$ for $u _0$.
Its spectral property is introduced in the next proposition, whose proof is given at the end of \cref{sec:kolmogorov}.

\begin{proposition}\label{pro:Kolmogorov}
    The linearized Euler operator $\cL_0 : \cX \supset D (\cL _0) \to \cX$ \eqref{FLJH_0} associated with the sinusoidal flow \eqref{Kolmogorov_flow}, with domain 
    \begin{align*}
        D(\cL_0) := \{f\in \cX \mid u_0 \cdot \nabla f\in \cX \text{ in the sense of distributions}\},
    \end{align*}
    has real algebraically simple isolated eigenvalues $\lambda_0>0$ and $ -\lambda_0<0$. Specifically, there exists $\phi_0\in D(\cL_0)$ such that
    \begin{align}
        \cL_0 \phi_0=\lambda_0 \phi_0,\quad  \lambda_0>0.\label{unstable_lambda_0}
    \end{align}
    Let $\sigma(\cL_0)$ denote the spectrum of the operator $\cL_0$, then $\sigma(\cL_0) \setminus \{-\lambda_0, \lambda_0\} \subset i\R$.
\end{proposition}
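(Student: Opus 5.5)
The plan is to block-diagonalize $\cL_0$ in Fourier variables, show that only one block can be unstable, and then analyze that block with the continued-fraction method of Meshalkin--Sinai together with an index count for operators of the form $\cJ\cH$.

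\textbf{Step 1: Fourier block decomposition.} Write $f\in\cX$ as in \eqref{eqn: fourier}. Since $\partial_x\cos(3jx+ny)=-3j\sin(3jx+ny)$ and $\sin a\sin b=\tfrac12(\cos(a-b)-\cos(a+b))$, the operator $\cL_0$ in \eqref{FLJH_0} maps $\cos(3jx+ny)$ into a combination of $\cos(3jx+(n\pm5)y)$. Hence $\cL_0$ preserves each sector
\begin{align*}
    \cX_j=\overline{\operatorname{span}}\{\cos(3jx+ny): n\equiv -j \ (\mathrm{mod}\ 5)\}.
\end{align*}
On each $\cX_j$ it acts as a tridiagonal (Jacobi-type) operator in the index $m$, where $n=n_0+5m$. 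On $\cX_0$ we have $\cL_0=0$, because $\partial_x$ kills every term. In this basis $\cH_0$ is the diagonal multiplier $\tfrac1{25}-\tfrac1{9j^2+n^2}$, and $\cJ_0=-125\sin 5y\,\partial_x$ is skew and injective on $\cX_j$ for $j\ge1$.

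\textbf{Step 2: locating the possible instability.} The sign of the multiplier of $\cH_0$ is determined by whether $9j^2+n^2$ is larger or smaller than $25$.
\begin{itemize}
    \item For $j\ge2$ we have $9j^2\ge36>25$, so $\cH_0$ is positive definite and bounded below by a positive constant on $\cX_j$. Then $\cL_0|_{\cX_j}$ is similar, via $\cH_0^{1/2}$, to the skew-adjoint operator $\cH_0^{1/2}\cJ_0\cH_0^{1/2}$, so its spectrum lies in $i\R$.
    \item For $j=1$, with $n\equiv4 \ (\mathrm{mod}\ 5)$, the multiplier is negative exactly at $n=-1$ (where $9+1=10$), vanishes exactly at $n=4$ (where $9+16=25$), and is positive otherwise.
\end{itemize}
So $\cH_0|_{\cX_1}$ has exactly one negative direction and a one-dimensional kernel. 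The kernel contributes the eigenvalue $0\in i\R$.

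\textbf{Step 3: isolatedness.} The operator $\Delta^{-1}$ is compact, so $\cL_0=\tfrac1{25}\cJ_0+\cJ_0\Delta^{-1}$ is a relatively compact perturbation of the skew operator $\tfrac1{25}\cJ_0$. Therefore the essential spectrum lies in $i\R$, and any eigenvalue off $i\R$ is isolated with finite algebraic multiplicity. The relevant domain is the one given in the proposition.

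\textbf{Step 4: the sector $\cX_1$.} Here I would invoke the Lin--Zeng index theorem for $\cJ\cH$ with $\cH$ bounded, self-adjoint, having finitely many negative directions, and $\cJ$ skew. It gives
\begin{align*}
    k_r+2k_c+2k_i^{\le0}+k_0^{\le0}=n^-(\cH_0)=1,
\end{align*}
where each term counts algebraic multiplicities. Since $1$ is odd and $k_c$ enters with a factor $2$, this leaves $k_r\in\{0,1\}$ and excludes any complex unstable pair.

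The \textbf{main obstacle} is to prove $k_r=1$ rather than an instability hidden at $0$ or on $i\R$. For this I would construct the unstable eigenvalue directly, as in Meshalkin--Sinai and Belenkaya--Friedlander--Yudovich:
\begin{itemize}
    \item Write the eigenvalue equation as a three-term recurrence for $b_m=\cH_0$-multiplier times $c_m$.
    \item Express the decaying solutions for $m\to\pm\infty$ as continued fractions.
    \item Reduce existence of a real eigenvalue $\lambda>0$ to a scalar matching equation $G(\lambda)=0$ at the negative mode $n=-1$.
    \item Show $G$ changes sign on $(0,\infty)$: as $\lambda\to\infty$ the positive terms dominate, while as $\lambda\to0^+$ the sign is governed by the negative entry at $n=-1$.
\end{itemize}
This is where the specific numbers ($5$, $3$, and the single negative mode) must be checked by hand.

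\textbf{Step 5: conclusion.} Once a real $\lambda_0>0$ with eigenfunction $\phi_0$ exists, the index identity forces $k_r=1$, which means $\lambda_0$ is algebraically simple. The symmetry $\lambda\mapsto-\bar\lambda$ of the spectrum of $\cJ\cH$ (which one can check via $\cL_0^*=-\cH_0\cJ_0$ and similarity) shows that $-\lambda_0$ is also an algebraically simple eigenvalue. Since the index identity leaves no other eigenvalue off $i\R$ in $\cX_1$, and Step 2 handles $j=0$ and $j\ge2$, we obtain $\sigma(\cL_0)\setminus\{\pm\lambda_0\}\subset i\R$.
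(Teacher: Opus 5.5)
Your proposal is correct and follows essentially the same route as the paper. It builds a real unstable eigenpair by continued fractions in the sector $\cos(3x+ny)$, $n\equiv 4\ (\mathrm{mod}\ 5)$. It then applies the Lin--Zeng index formula with $n^-(\cH_0)=1$ (the single negative mode $\cos(3x-y)$) to force $k_r=1$, $k_c=0$ and simplicity of $\pm\lambda_0$. Finally, compactness of $\cJ_0\Delta^{-1}$ keeps the essential spectrum on $i\R$. The remaining differences are cosmetic. The paper applies the index formula on all of $\cX$ at once, so your separate similarity argument for the sectors $j\ge 2$ is not needed. The paper also uses $h_4=0$ to decouple the chain, setting $c_9=c_{14}=\cdots=0$. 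Only the $n\to-\infty$ continued fraction with boundary condition $r_{-1}=0$ is then required, and the limit $\cF_{-1}(0^+)=1$ behind your sign change is supplied by Lemma~\ref{prop:friedlander1998}.
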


As we discussed in the previous section, $\Delta \omega_0$ would have trivial projection to the spaces spanned by $\phi_0$. This inspires us to find the steady state near the sinusoidal flow whose stream function lies in $\cX$.

\begin{proposition}\label{pro:Euler_Steady}
    There exists a universal constant $\e ^* > 0$, such that for $0 \le \e \le \e ^*$, there exists a Euler steady state $\ome \in \cX$, such that the associated stream function $\ye \in C ^\infty (\T ^2)$ solves the elliptic equation $\Delta \ye = \Fe (\ye)$, and $(\ye, \Fe)$ has asymptotic expansions
    \begin{align*}
        \Fe (s) &= -25 s + \e (2 s ^2 - 1) + \cO (\e ^2), \\
        \ye (x, y)
        & = 
        \cos 5 y + \e \left(
            \cos (3 x + 4 y)
            -\frac1{75} \cos 10 y 
        \right) + \cO (\e ^2).
    \end{align*}
    $\ye$ is analytic in $x$ and $y$, whereas $\Fe$ is a cubic polynomial in $s$. The remainder of $\Fe$ is an $s$-polynomial whose coefficients have size $\cO (\e ^2)$. The remainder of $\ye$ has size $\cO (\e ^2)$ in every fixed Sobolev norm: for all $k \in \N$, there is a $B _k>0$, such that $\|\ye-\psi_0\|_{H ^k (\T ^2)} \leq B _k \e$. 
\end{proposition}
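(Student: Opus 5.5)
We construct $(\ye,\Fe)$ by a Lyapunov--Schmidt (bifurcation) argument around the kernel of $\Delta+25$ in the symmetry class $\cX$, in the spirit of \cite{coti2023stationary}. We look for
\begin{align*}
    \Fe(s) = -25 s + \e\, G_\e(s),
\end{align*}
where $G_\e$ is a cubic polynomial with $\e$-dependent coefficients. These coefficients supply the finite number of free parameters needed to solve the bifurcation equations.

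\textbf{Step 1: the kernel.} By \eqref{eqn: fourier}, the modes $\cos(3jx+ny)$ with $9j^2+n^2=25$ and $5\mid j+n$ that lie in $\cX$ are $\cos 5y$ (from $(j,n)=(0,5)$) and $\cos(3x+4y)$ (from $(1,4)$); the mode $(1,-4)$ is excluded since $5\nmid -3$. Thus $\ker(\Delta+25)|_\cX=\mathrm{span}\{\cos 5y,\cos(3x+4y)\}$. Let $\Pi$ be the $L^2$ projection onto this span and $\Pi^\perp=I-\Pi$. We note that $\cX$ is closed under products of functions, after removing the mean. This holds because each defining symmetry is a group action on $\T^2$, so $\cX$ plus constants is an algebra. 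Hence $F(\psi)$ stays in the class for any polynomial $F$, and the mean-zero condition is handled by allowing a constant term in $F$.

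\textbf{Step 2: ansatz and the range equation.} Write
\begin{align*}
    \ye = \cos 5y + \e\, a\cos(3x+4y) + \e\, r,
\end{align*}
with $a\in\R$ and $r\in \Pi^\perp\cX$. We normalize the $\cos 5y$ coefficient to $1$. The equation $\Delta\ye+25\ye=\e G(\ye)$ then reads
\begin{align*}
    (\Delta+25) r = G(\ye).
\end{align*}
Projecting onto the range, $r=(\Delta+25)^{-1}\Pi^\perp G(\ye)$. The map $r\mapsto (\Delta+25)^{-1}\Pi^\perp G(\cos 5y+\e a\cos(3x+4y)+\e r)$ is a contraction on a ball of the analytic class, or of $H^k$ with $k\ge 2$ (an algebra), uniformly for small $\e$. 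This gives $r=r_\e(a,G)$ smooth in the parameters.

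\textbf{Step 3: the kernel equations.} Two bifurcation equations remain:
\begin{align*}
    \Pi G(\ye)=0, \qquad \text{coefficients of } \cos 5y \text{ and } \cos(3x+4y).
\end{align*}
At $\e=0$ they involve only $G(\cos5y)$. Choose $G(s)=\alpha(2s^2-1)+\beta s+\gamma s^3+c$. The quantity $2\cos^2 5y-1=\cos 10y$ is orthogonal to the kernel and has mean zero. The linear term $\beta s$ projects onto $\cos 5y$, so it must vanish at leading order. The cubic term $\gamma s^3$ also produces a $\cos 5y$ component, which likewise must vanish.

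At leading order, then, $G_0(s)=2s^2-1$ with $a=1$. Indeed, $(\Delta+25)r_0=\cos 10y$ gives $r_0=-\tfrac1{75}\cos 10y$, matching the stated expansion.

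For the $\cO(\e)$ corrections, we treat the remaining coefficients $(\beta,\gamma)$, together with the constant $c$ fixed by mean-zero, as unknowns of size $\cO(\e)$. We solve the two kernel equations by the implicit function theorem in $(\beta,\gamma)$ at fixed $a=1$. The free amplitude $a=1$ is what makes the solution non-shear.

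\textbf{Main obstacle.} The hard step is verifying nondegeneracy: the $2\times2$ Jacobian of the kernel equations in $(\beta,\gamma)$ must be invertible at $\e=0$. This requires computing the projections onto $\cos 5y$ and $\cos(3x+4y)$ of $\psi_1:=\cos5y+\e\cos(3x+4y)$ and $\psi_1^3$, including the second-order interactions with $r_0$. The linear term $\beta\psi_1$ projects to $(1,\e)$. For the cubic term, $\cos^3 5y$ gives $\tfrac34$ on $\cos5y$, and $3\cos^2 5y\cdot\e\cos(3x+4y)$ gives $\tfrac32\e$ on $\cos(3x+4y)$; these vectors are independent. Since the $\cos(3x+4y)$ components are $\cO(\e)$, we rescale that equation by $\e^{-1}$ before applying the implicit function theorem.

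We must also check that the $\cO(\e)$ part of the $\cos(3x+4y)$ equation coming from $\alpha$ vanishes or is absorbed. This requires computing $\Pi[4\cos 5y\cdot(\cos(3x+4y)+r_0)]$, which is zero by frequency counting, since $(3,\pm9)$ and $(3,\pm1)$ lie off the circle of radius $5$.

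\textbf{Regularity and estimates.} Analyticity follows from elliptic regularity for $\Delta\psi=F(\psi)$ with $F$ a polynomial. The bounds $\|\ye-\psi_0\|_{H^k}\le B_k\e$ follow from the contraction estimate in each $H^k$.
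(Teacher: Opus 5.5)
Your proposal is correct and is essentially the paper's argument. It is a Lyapunov--Schmidt reduction around $\ker(\Delta+25)|_{\cX}=\operatorname{span}\{\cos 5y,\cos(3x+4y)\}$. The $2s^2-1$ coefficient is fixed, and the linear and cubic coefficients of $\Fe$ solve the two kernel equations. The paper's system from $As+BT_3(s)$, namely $2\pi^2A+\cdots=0$ and $2\pi^2(A+3B)+\cdots=0$, is exactly your rescaled $2\times2$ system under $\beta=A-3B$, $\gamma=4B$. The range equation is solved by contraction, and elliptic bootstrapping gives smoothness and the $H^k$ bounds.

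The differences are cosmetic. The paper uses a contraction for $(A,B)$ at fixed $\varrho$ and then a contraction in $\varrho$, instead of the implicit function theorem. It also does not normalize $\ye$ to mean zero, so it needs no constant term in $\Fe$. One small correction: only the $\cos(3x+4y)$-component of $\Pi[4\cos 5y(\cos(3x+4y)+r_0)]$ vanishes. Its $\cos 5y$-component $-\tfrac{2}{75}$ is nonzero, and it is precisely the $\cO(\e)$ forcing in the first kernel equation.
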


The proof of this construction will be the goal of \cref{sec:fixed-point}. Next, we aim to study the mapping properties of the semigroup generated by the linearized Euler operator \eqref{eq:w_cpt} 
\begin{equation}\label{L_ep}
    \cL_\e w :=- \ue \cdot \nabla w - \nabla \ome \cdot \curl^{-1} w 
\end{equation}
in various Sobolev spaces. Let us pick a fixed large regularity level $m\in \N$ that satisfies
\begin{align}\label{choice_m}
    m \ge 2 + \frac{1000}{\lambda_0}. 
\end{align}
Since $\lambda_0$ is a universal constant, the level $m$ is universal. Under this setup, the spectral property of $\cLe$ is presented in the next proposition.

\begin{proposition}\label{pro:Euler_spectral}
For $0 \le k \le 2m$, define 
\begin{align}
    \label{eqn: Xk}
    \cX ^k = \cX \cap H ^k(\T ^2).
\end{align}
Let $\lambda_0$ be the unstable eigenvalue of $\cL _0$ \eqref{unstable_lambda_0}. There exists a threshold $\e ^{**} \le \e ^*$, such that for all $\e \in [0, \e ^{**}]$, the following statements hold.
\begin{enumerate}
    \item \label{item:Euler_spectral_1} Consider the linearized operator $\cLe: \cX ^k \supset D ^k (\cLe) \to \cX ^k$ with domain
    \begin{align*}
        D ^k (\cLe) := \{f \in \cX ^k \mid \ue \cdot \nabla f \in \cX ^k \text{ in the sense of distributions}\}.
    \end{align*}
    The most unstable eigenvalue $\lambda _\e$ is positive and simple, with an eigenfunction $\fe \in \cX ^k$. Moreover, $\lambda _\e \to \lambda _0$ as $\e \to 0$. 

    \item \label{item:nontrivial-proj}
    Let $\P _{\lambda_\e}$ be the Riesz
    projection of $\cLe$ associated with $\lambda _\e$. There exists a constant $\kappa>0$ such that
    \begin{align*}
    \|\P_{\lambda_\e} \Delta\ome\|_{L^2}
    \geq
    \kappa\e^2.
    \end{align*}    
    In particular, $\P_{\lambda_\e} \Delta\ome \ne 0$ for all sufficiently small positive $\e \le \e ^{**}$.

    \item \label{item:Euler_spectral_3} 
    Define the complementary Riesz projection 
    \begin{equation}\label{defn:Q,P}
     \Q_{\lambda_\e} := \I - \P_{\lambda_\e}.
    \end{equation} 
    For $t \ge 0$, we have the relation  
    $\expo{t\cLe}\P_{\lambda_\e}
    =\expo{\lambda_\e t}\P_{\lambda_\e}$, and the bound 
    \begin{equation}
    \label{eq:euler-complement-outline}
        \|\expo{t\cLe}\Q _{\lambda_\e}\| _{\cX ^k \to \cX ^k}
        \le M \expo{\lambda _\e t / 2}, \qquad t \ge 0
    \end{equation}
    for some constant $M$ depending on $\ue$ and $m$.
\end{enumerate}
\end{proposition}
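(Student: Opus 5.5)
We will treat $\cLe$ as a small perturbation of $\cL_0$ and use \cref{pro:Kolmogorov} together with \cref{lem:riesz}.

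\textbf{Step 1: perturbation of the eigenvalue.} The transport part $-\ue\cdot\nabla$ is skew-adjoint on $\cX$, and its perturbation is $-(\ue-u_0)\cdot\nabla$, which is unbounded. So instead of a naive resolvent expansion we write
\begin{align*}
(z-\cLe)^{-1}=(z-\cL_0)^{-1}\bigl(\I-(\cLe-\cL_0)(z-\cL_0)^{-1}\bigr)^{-1}
\end{align*}
and check that the correction is small on $\Gamma_{\lambda_0}$. This is where we use the structure of $\cL_0$: in Fourier variables it is a Jacobi-type (tridiagonal in $n$) operator. Away from the imaginary axis, $(z-\cL_0)^{-1}$ maps into a space gaining the derivative $\sin 5y\,\partial_x$. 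An alternative is to conjugate by the near-identity change of variables straightening the streamlines of $\ye$, so that the perturbation becomes bounded, of size $\cO(\e)$.

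Either way, for $\e\le\e^{**}$ the circle $\Gamma_{\lambda_0}$ of radius $\lambda_0/4$ stays in the resolvent set. Continuity of Riesz projections then gives $\operatorname{rank}\P_{\lambda_\e}=1$ and $\lambda_\e\to\lambda_0$. Reality of $\lambda_\e$ follows from the real structure of $\cLe$ and simplicity: $\bar\lambda_\e$ is also an eigenvalue inside the same circle. The symmetry $-\bar\lambda_\e\in\sigma(\cLe)$ follows from the Hamiltonian factorization $\cJ\HX$ of \cref{lem:riesz}. Regularity $\fe\in\cX^k$ follows by bootstrapping the eigen-equation $\lambda_\e\fe+\ue\cdot\nabla\fe=\cK\fe$: the right side is smoothing, and $\lambda_\e>0$ allows solving along the flow of $\ue$ to gain derivatives.

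\textbf{Step 2: nontrivial projection.} By \cref{lem:riesz}, $\|\P_{\lambda_\e}\Delta\ome\|\gtrsim|(\Delta\ome,\HX\phi_\e^-)|$, using that $(\fe,\HX\phi^-_\e)\to(\phi_0,\cH_0\phi_0^-)\ne0$. Expand both factors in $\e$ with \cref{pro:Euler_Steady}:
\begin{align*}
\Delta\ome=625\cos5y+\e\Delta^2\bigl(\cos(3x+4y)-\tfrac1{75}\cos10y\bigr)+\cO(\e^2),
\end{align*}
and $\phi_\e^-=\phi_0^-+\e\phi_1^-+\cO(\e^2)$, where $\phi_1^-$ solves $(\cL_0+\lambda_0)\phi_1^-=-\cL_1\phi_0^-+\mu\phi_0^-$ with a solvability condition. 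The inner product is $0$ at orders $\e^0$ and $\e^1$. At order $\e^0$ this holds because $\phi_0^-$ has only modes with $j\ge1$. At order $\e^1$ we need to check $(\Delta^2\cos(3x+4y),\cH_0\phi_0^-)$ together with the $\cos5y$ pairing against $\cH_1\phi_0^-+\cH_0\phi_1^-$. We therefore compute the $\e^2$ coefficient explicitly, using the continued-fraction description of $\phi_0$ from \cref{sec:kolmogorov}, and show it is nonzero. This explicit, possibly numerically assisted, computation is the main obstacle. Convergence of the expansions follows from Step 1's analytic dependence on $\e$.

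\textbf{Step 3: semigroup bound.} $\expo{t\cLe}\P_{\lambda_\e}=\expo{\lambda_\e t}\P_{\lambda_\e}$ is immediate from rank one and invariance. For the complement, write $\cLe=\cT_\e+\cK_\e$ with $\cT_\e$ skew on $\cX^k$ up to a bounded commutator and $\cK_\e$ compact. Then the essential growth bound on $\cX^k$ is at most $C_\e k$ (Lipschitz norm of $\ue$), which is $\cO(\e k)\le\lambda_0/4$ for $k\le 2m$ and $\e$ small, since $\nabla u_0$ contributes $\le 25k$. To avoid this, we instead use that for $u_0$ the Fourier structure makes the commutator vanish in $x$-derivatives. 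The choice \eqref{choice_m} of $m$ suggests exactly this: a growth $\sim k\|\nabla\ue\|$ balanced against $\lambda_0$. Given the essential bound, the remaining spectrum of $\cLe|_{\operatorname{Ran}\Q_{\lambda_\e}}$ in $\{\Re z>\lambda_\e/2\}$ is discrete, and near $\sigma(\cL_0)\setminus\{\lambda_0\}\subset i\R\cup\{-\lambda_0\}$ it is empty by Step 1's resolvent continuity. The Gearhart–Prüss/spectral mapping theorem for the eventually compact part then yields $\|\expo{t\cLe}\Q_{\lambda_\e}\|\le M\expo{\lambda_\e t/2}$.
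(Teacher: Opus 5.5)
Your Steps 1 and 3 both depend on an estimate you never establish: control of the transport part $\cTe=-\ue\cdot\nabla$ on $\cX^k$, uniformly in $\e$, with an exponential rate that is small compared with $\lambda_0$. Neither mechanism you propose supplies it.

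\textbf{The Neumann series.} $\cLe-\cL_0$ contains $-(\ue-u_0)\cdot\nabla$, and at order $\e$ this includes $3\e\sin(3x+4y)\,\partial_y$. By contrast, $\cL_0$ only involves $\sin 5y\,\partial_x$ and annihilates, e.g., $\cos 5ny\in\cX$. So the perturbation is not even relatively bounded with respect to $\cL_0$, and $(\cLe-\cL_0)(z-\cL_0)^{-1}$ is unbounded on $\cX$.

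\textbf{Straightening streamlines.} This is impossible. The lines $\sin 5y=0$ consist of degenerate critical points of $\cos 5y$, and $\ye$ develops cat's eyes there (a center near $(0,0)$, a saddle near $(\pi/3,0)$). No near-identity diffeomorphism can map its level sets to horizontal lines.

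\textbf{Step 3.} The commutator bound gives a rate of order $k\|\nabla\ue\|_{L^\infty}\approx 25k$, not $\cO(\e k)$. Your remark about $x$-derivatives applies only to the shear $u_0$. Also, \eqref{choice_m} serves the Grenier energy estimate in \cref{pro:reg}, not this bound.

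\textbf{The missing idea} is a flow-map (Lyapunov) estimate. $\nabla\ue$ is an $\cO(\e)$ perturbation of the nilpotent matrix $\bigl(\begin{smallmatrix}0&U'\\0&0\end{smallmatrix}\bigr)$, so $|\nabla\Xe|+\e^{-1/2}|\nabla\Ye|$ grows at rate $C\sqrt\e$. Inductively $|\nabla^k\Phie(t)|\le C_k(t+2)^{3k-2}\expo{Ck\sqrt\e t}$, hence $\|\expo{t\cTe}\|_{\cX^k\to\cX^k}\le C_{k,\gamma}\expo{\gamma t}$ once $\e\lesssim(\gamma/k)^2$. Combine this with $z-\cLe=(z-\cTe)(\I-\cR_z(\cTe)\cKe)$ and $\cKe:\cX^k\to\cX^{k+1}$. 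This confines the spectrum in $\{\Re z>\gamma\}$ to a fixed ball and bounds eigenfunctions in $\cX^{k+1}$. A compactness argument using only $\|\cLe-\cL_0\|_{\cX^{k+1}\to\cX^k}\to0$ then gives:
\begin{itemize}
\item the uniform resolvent bound on the contour;
\item convergence of the Riesz projections;
\item the uniform resolvent bound on $\Q_{\lambda_\e}\cX^k$ over $\{\Re z>2\gamma\}$, which is what Gearhart--Pr\"uss requires (absence of spectrum alone is not enough).
\end{itemize}
Your regularity bootstrap for $\fe$ tacitly needs the same transport bound.

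\textbf{Step 2} is also left open. You reduce to an $\e^2$ coefficient involving $\phi_1^-$, $\cH_1$ and $\psi_2$, but do not show it is nonzero. Justifying a second-order expansion of $\phi_\e^-$ would also need differentiability in $\e$, which the non-relatively-bounded perturbation does not give you.

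The paper avoids expanding the eigenfunction at all. Since $\cLe^*\phi_\e^*=\lambda_\e\phi_\e^*$, one has $(\Delta\ome,\phi_\e^*)=\lambda_\e^{-1}(\cLe\Delta\ome,\phi_\e^*)$. By \cref{lem:L-Delta-omE}, $\cLe\Delta\ome=-\Fe''(\ye)\,\ue\cdot\nabla|\ue|^2$. Both factors vanish for the sinusoidal flow ($F_0$ is affine and $|u_0|$ is constant on streamlines), so $\cLe\Delta\ome=\e^2G+o(\e^2)$ uniformly, with $G$ explicit in $F_1''=4$, $u_0$ and $u_1$. This explains structurally why orders $\e^0$ and $\e^1$ vanish. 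It also means only $\phi_\e^*\to\phi_0^*$ in $L^2$ and $\lambda_\e\to\lambda_0$ are needed. Pairing $G$ with $\phi_0^*$ and integrating by parts reduces everything to $\frac{36\lambda_0}{5}(\cos(3x-y),\phi_0^-)$. This is nonzero because $c_{-1}\ne0$ in \cref{lem_1:Kolmogorov}.
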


We prove Item \ref{item:nontrivial-proj}~in \cref{sec:riesz}, and Item \ref{item:Euler_spectral_1} and \ref{item:Euler_spectral_3} follow standard arguments (see \cites{kato1995perturbation,ShvydkoyLatushkin2005,ShvydkoyFriedlander2008,lin2013unstable}) and are postponed in Appendix \ref{sec:spectral_Euler}.

\subsection{Energetic analysis}
In this section, we design the Grenier iteration scheme. First of all, we determine the level of approximation. Let us consider the parameter regime $\e \le \e ^{**}$ defined in Proposition \ref{pro:Euler_spectral}, and recall the Euler steady state $\omE =\ome$ with the unstable  eigenvalue $\lambda=\lambda_\e>0$ from Proposition \ref{pro:Euler_spectral}.

Let us consider the following decomposition of the Navier--Stokes solution \eqref{eq:NS_vor} at level $m$: \begin{equation}
    \omnu:=\sum_{k=0}^{m-1}\nu^{k}\omega _k+\tilde \omega,\quad u_k:=\operatorname{curl}^{-1}\omega_k,\quad \tilde u:=\operatorname{curl}^{-1}\tilde \omega,
\end{equation} 
where $\omega _0 = \omE$ and the $\omega _k$ solves the equation
\begin{align}
   \partial _t \omega _k + \sum _{\substack{i, j \ge 0 \\i + j = k}} u _i \cdot \nabla \omega _j  &= \Delta \omega _{k - 1},\quad\omega_k(t=0)\equiv0, \quad\text{ for } 1 \le k \le m - 1.
\end{align}
These equations can be iteratively solved by the Duhamel's Formula: $\om _k (t) = \int _0 ^t \expo{(t - s) \cL}  f _k (s) \d s$ for some $f _k$ depending on $\omega _j$, $j < k$.
The remainder $\tilde \omega$ initiates from $\tilde \omega(t=0)\equiv0$, and satisfies the equation
\begin{equation}
    \label{eqn:remainder}
    \partial _t \tilde \omega + \unu \cdot \nabla \tilde \omega + \tilde u \cdot \nabla \left(\sum _{k = 0} ^{m - 1} \nu ^k \omega _k\right) = \nu ^m \Delta \omega _{m - 1} + \nu \Delta \tilde \omega - \sum _{\substack{1 \le i, j \le m - 1 \\i + j \ge m}} \nu ^{i + j} u _i \cdot \nabla \omega _j.
\end{equation}
We need the following proposition. 
\begin{proposition}\label{pro:reg}
    Fix approximation level $m$ as in \eqref{choice_m}. For $\e$ sufficiently small, we have $(m - 2) \lambda _\e \ge 4 \| \nabla \ome \| _{L ^\infty}$. Now we set $\omE=\ome$, $\lambda=\lambda_\e$. For $1 \le k \le m - 1$, there exists constant $C_k=C_k(\omE, \lambda, m)$ such that 
    \begin{align}     
        \| \omega _k (t) \| _{H ^{2 (m + 1 - k)}} &\le C _k \expo{k \lambda t}, \qquad \forall t \ge 0.
    \end{align}
    There exists a constant $\tilde C = \tilde C (\omE, \lambda, m)$, such that for each $\delta \le \delta ^{\mathrm E}$ and $\nu \le c ^{\mathrm E} \delta$, the following estimates are satisfied:
    \begin{align}
        \| \tilde{\omega} (t) \| _{L ^2} &\le \tilde C \sum _{j = m - 1} ^{2 m} \nu ^j \expo{j \lambda t}, \qquad \text{ for } t \le \frac1\lambda\log\left(\frac\delta\nu\right).
    \end{align}
\end{proposition}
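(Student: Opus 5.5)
The proof has two parts: estimates for the profiles $\omega_k$, proved by induction on $k$, and an energy estimate with a bootstrap for the remainder $\tilde\omega$. The preliminary claim $(m-2)\lambda_\e \ge 4\|\nabla\ome\|_{L^\infty}$ follows from \eqref{choice_m}. Indeed, \cref{pro:Euler_spectral} gives $\lambda_\e\to\lambda_0$, and \cref{pro:Euler_Steady} gives $\|\nabla\ome\|_{L^\infty}\to\|\nabla\omega_0\|_{L^\infty}=125$. Hence $(m-2)\lambda_\e \ge 1000\,\lambda_\e/\lambda_0 \ge 500$ for $\e$ small, which exceeds $4\cdot 125\cdot(1+o(1))$.

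\emph{Profiles.} I rewrite the $\omega_k$ equation as $\partial_t\omega_k=\cL_\e\omega_k+f_k$ with
\[
f_k=\Delta\omega_{k-1}-\sum_{1\le i,j\le k-1,\ i+j=k}u_i\cdot\nabla\omega_j .
\]
Here I use $\omega_0=\ome$ and the fact that the terms with $i=0$ or $j=0$ and the other index equal to $k$ form $-\cL_\e\omega_k$. All $f_k$ lie in $\cX$, because the symmetry class is preserved by the nonlinearity and by $\Delta$.

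For $k=1$ the forcing $f_1=\Delta\ome$ is static and smooth. Splitting $\P_{\lambda_\e}+\Q_{\lambda_\e}$ and using item \ref{item:Euler_spectral_3} of \cref{pro:Euler_spectral} in $\cX^{2m}$ gives
\[
\|\omega_1(t)\|_{H^{2m}}\lesssim \int_0^t \expo{\lambda(t-s)}\d s\lesssim \expo{\lambda t}.
\]
For $k\ge2$, the inductive hypothesis together with the algebra property of $H^{s}$ for $s>1$ gives
\[
\|f_k(s)\|_{H^{2(m+1-k)}}\lesssim \expo{(k-1)\lambda s}+\sum_{i+j=k}\expo{k\lambda s}\lesssim \expo{k\lambda s}.
\]
The term $\Delta\omega_{k-1}$ costs exactly two derivatives, which the index shift $2(m+2-k)\to 2(m+1-k)$ absorbs. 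The Duhamel integral is then bounded by
\[
\int_0^t M\expo{\lambda(t-s)}\expo{k\lambda s}\d s\le \frac{M}{(k-1)\lambda}\,\expo{k\lambda t}.
\]
Since $k\ge 2$ puts the growth rate $k\lambda$ strictly above $\lambda$, no resonance occurs. The semigroup bound is needed in $\cX^{j}$ for every $j\le 2m$, which \cref{pro:Euler_spectral} supplies.

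\emph{Remainder.} I perform an $L^2$ energy estimate on \eqref{eqn:remainder}. The transport term $\unu\cdot\nabla\tilde\omega$ vanishes against $\tilde\omega$, and $\nu\Delta\tilde\omega$ has a favorable sign. The stretching term is bounded by
\[
\|\tilde u\|_{L^2}\Big\|\nabla\sum_k\nu^k\omega_k\Big\|_{L^\infty}\|\tilde\omega\|_{L^2}.
\]
On the interval $t\le\lambda^{-1}\log(\delta/\nu)$ we have $\nu^k\expo{k\lambda t}\le\delta^k$. With $\delta\le\dE$ small and $\|\tilde u\|_{L^2}\le\|\tilde\omega\|_{L^2}$ on mean-zero $\T^2$, this term is at most $(\|\nabla\ome\|_{L^\infty}+C\delta)\|\tilde\omega\|_{L^2}^2$. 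The forcing terms satisfy
\[
\nu^m\|\Delta\omega_{m-1}\|_{L^2}\lesssim \nu^m\expo{(m-1)\lambda t}\le \nu\,\nu^{m-1}\expo{(m-1)\lambda t},
\]
using $H^{4}$ control of $\omega_{m-1}$. The quadratic terms with $i+j=j'\in[m,2m-2]$ give $\nu^{j'}\expo{j'\lambda t}$, controlled in $L^2$ by $H^{2}$ norms of $\omega_i,\omega_j$ with $i,j\le m-1$. Grönwall with rate $\gamma:=\|\nabla\ome\|_{L^\infty}+C\delta\le(m-2)\lambda/4+C\delta$ then gives
\[
\|\tilde\omega(t)\|_{L^2}\le \sum_j \nu^j\int_0^t \expo{\gamma(t-s)}\expo{j\lambda s}\d s\lesssim \sum_{j=m-1}^{2m}\nu^j\expo{j\lambda t}.
\]
This works because $j\lambda-\gamma\ge(m-1)\lambda-(m-2)\lambda/2>0$, so the integral is dominated by its upper endpoint.

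The main obstacle is the nonlinear self-interaction $\tilde u\cdot\nabla\tilde\omega$. It is hidden in $\unu\cdot\nabla\tilde\omega$, so it does not appear in the energy identity, but it is why I use the full $\unu$ as the transport velocity. The genuine difficulty is bounding $\|\nabla\omega_k\|_{L^\infty}$ uniformly so that the Grönwall rate stays below $(m-1)\lambda$. This is the purpose of the hypothesis $(m-2)\lambda\ge4\|\nabla\ome\|_{L^\infty}$, and the condition $\nu\le\cE\delta$ keeps the time window nonempty. I would run this as a bootstrap on the interval and close it with the sum bound above.
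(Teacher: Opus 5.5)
Your proposal is correct and follows essentially the same route as the paper. Like the paper, you run an induction on $k$ via Duhamel's formula using $\|\expo{t\cL}\|_{\cX^k\to\cX^k}\lesssim\expo{\lambda t}$ from \cref{pro:Euler_spectral}, then an $L^2$ energy estimate on \eqref{eqn:remainder}, in which $\tilde u\cdot\nabla\tilde\omega$ drops out, closed by Gr\"onwall because the rate stays below $(m-1)\lambda$. Since that estimate is linear in $\tilde\omega$, no bootstrap is actually needed.

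One small slip in your preliminary claim: bounding $(m-2)\lambda_\e$ below by $500$ throws away the margin, and $500$ need not exceed $500(1+o(1))$. The correct comparison is $(m-2)\lambda_\e\ge 1000\lambda_\e/\lambda_0\to 1000$ against $4\|\nabla\ome\|_{L^\infty}\to 500$.
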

The proposition will be proven in \cref{sec:Grenier}. Once the proposition is developed, we can conclude the proof of Theorem \ref{thm:main}.

\begin{proof}[Proof of Theorem \ref{thm:main}]First of all, we fix the $\e>0$ such that all the statements in \cref{pro:Euler_Steady,pro:Euler_spectral,pro:reg} hold and declare that all the constants $C_1,C_2,\cdots$ in this proof depends on the choice of $\omE := \ome$. The only notation that we will bring from Proposition \ref{pro:Euler_spectral} is the eigenvalue $\lambda$. Next, we observe that the first approximation $\omega_1$ solves the equation ($\cL:=\cL_\e$)
\begin{equation}
 \partial _t \omega _1 = \cL\omega_1+\Delta \omE,\quad \omega_1(t=0)\equiv 0. 
 \end{equation} From Proposition \ref{pro:Euler_Steady}, we expect the $\omega_1$-component undergoes exponential growth with rate $\lambda>0$. Indeed, one can rewrite the $\omega_1$-component in Duhamel form using \eqref{defn:Q,P}
\begin{equation}
    \omega _1 (t) = \int _0 ^t \expo{(t - s) \cL} \Delta \omE \d s = \frac{\expo{\lambda t} - 1}{\lambda} \P_\lambda \Delta \omE + \int _0 ^t \expo{(t - s) \cL} \Q_\lambda \Delta \omE \d s.
\end{equation}
The complementary Riesz
$\Q _\lambda$-bound \eqref{eq:euler-complement-outline} yields $\left\| \int _0 ^t \expo{(t - s) \cL} \Q_\lambda \Delta \omE \d s \right\| _{L ^2} \le C_1 \expo{\lambda t / 2}.$ 
Since $\lambda\in \R$, and the Riesz projection $\P_\lambda \Delta \omE$ is nonzero  (Item \ref{item:nontrivial-proj},  Proposition \ref{pro:Euler_spectral}), we have that
\begin{align}\label{pf1.2_1}
    \| \omega _1 (t) \| _{L ^2} \ge \frac1{C_2} (\expo{\lambda t}-1)- C_1 \expo{\lambda t / 2} \ge \frac{1}{2C_2} \expo{\lambda t}, \qquad \forall t \geq \frac2{\lambda} \log(2+2C_1C_2).
\end{align}

The remaining task is to show that all the remaining components have small contribution. Thanks to Proposition \ref{pro:reg}, for all $\delta\leq1/2$ and for any $t$ satisfying $\nu \expo{\lambda t} \le \delta$,
\begin{align}
    \label{pf1.2_2}
    \left\| \sum _{k = 2} ^{m - 1} \nu ^k \omega _k + \tilde \omega \right\| _{L ^2} \le C _3 \sum _{k = 2} ^{2 m} \nu ^k \expo{k \lambda t} \le 2 C _3 \delta ^2.
\end{align}
Recalling the parameter regime $\{\cE \le (2+2C_1C_2)^{-2}$, $0<\nu\leq \cE\delta$\}, we set $t _\nu ^\delta:= \frac1{\lambda} \log \frac\delta\nu\geq\frac{2}{\lambda}\log(2+2C_1C_2)$. Hence, the relations \eqref{pf1.2_1} and \eqref{pf1.2_2} imply the vorticity separation
\begin{align*}
    \| \nu \omega _1 (t_\nu ^\delta) \| _{L ^2} \ge \frac \nu {2C_2} \expo{\lambda t_\nu ^\delta} = \frac\delta {2C_2}\quad \leadsto \quad  \| \omega (t_\nu ^\delta)- \omE \| _{L ^2} \ge \frac\delta{2C_2}   - 2{C_3} \delta ^2 \ge \frac\delta{4C_2},
\end{align*}
as long as $\dE \leq \frac{1}{8C_2C_3}$ and $0<\delta\le \dE$.

Finally, we consider the velocity difference $\uE-\unu$. The Biot--Savart law yields that 
\begin{align*}
    u _1=\curl^{-1}\om_1 = \frac{\expo{\lambda t} - 1}{\lambda} \curl ^{-1} \P_\lambda \Delta \omE + \int _0 ^t \curl ^{-1} \expo{(t - s) \cL} \Q_\lambda \Delta \omE \d s.
\end{align*}
By a similar argument as in \eqref{pf1.2_1}--\eqref{pf1.2_2}, there exist a constant $C_4$ such that 
\begin{align}
   \quad  \| u(t _\nu ^\delta) - \uE \| _{L ^2} \ge \frac\delta{C_4}.
\end{align}
The remaining proof is straightforward and hence omitted.
\end{proof}
The remainder of the paper proceeds from observation to perturbation, and from perturbation to iteration. We then develop the spectral theory of the linearized Euler operator $\cL$ in two steps. In Section~\ref{sec:kolmogorov}, we begin with the unstable sinusoidal flow, following the classical approach. In Section~\ref{sec:stationary_Euler}, we build on this analysis and perturb the flow to construct an unstable Euler steady state for which $\P_\lambda\Delta\omE\ne0$. Finally, in Section~\ref{sec:Grenier}, we establish the regularity estimates for the Grenier iteration scheme.

\section{Spectral analysis of an unstable Kolmogorov flow}
\label{sec:kolmogorov}

\subsection{Existence of unstable eigenpair in the symmetry class \texorpdfstring{$\cX$}{X}}\label{sec:Unstable_e_function}

We apply the continued fraction method to find the eigenfunction. Technical Lemma \ref{prop:friedlander1998} from \cite{friedlander1998instability} helps us in the analysis. To prove Proposition \ref{pro:Kolmogorov}, we first identify an eigenvalue in this subsection, and show it is the unique unstable eigenvalue in the next subsection.

\begin{lemma}\label{lem_1:Kolmogorov}
    Under the assumption of Proposition \ref{pro:Kolmogorov}, the linearized Euler operator $\cL _0$ has a pair of real eigenvalues $\lambda_0>0$ and $-\lambda_0$. The associated smooth eigenfunctions $\phi_0$ and $\phi_0^-$ satisfy the relation 
    \begin{align}\label{Kol_equality}
        ( \phi _0, \cos (3 x - y) ) = -( \phi _0 ^-, \cos (3 x - y) ) \ne 0.
    \end{align}
\end{lemma}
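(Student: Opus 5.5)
The plan is to solve the eigenvalue problem for $\cL_0$ explicitly in Fourier variables, restricted to the single mode chain in $\cX$ that contains $\cos(3x-y)$, and then to reduce that problem to a three-term recurrence handled by the continued-fraction criterion of Lemma \ref{prop:friedlander1998}.

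\textbf{Step 1: reduce to a recurrence.} Since $\cL_0$ commutes with $x$-translations, it preserves each $x$-frequency $3j$. Multiplication by $\sin 5y$ only couples the $y$-frequencies $n$ and $n\pm5$. By \eqref{eqn: fourier}, the $j=1$ block of $\cX$ consists of the modes $e^{i(3x+ny)}$ with $n=5k-1$, $k\in\Z$, together with their conjugates, which are forced by $f(x,y)=f(-x,-y)$. I therefore make the ansatz
\begin{align*}
    \phi=\sum_{k\in\Z}a_k e^{i(3x+(5k-1)y)}+\text{c.c.},
\end{align*}
which lies in $\cX$ for real $a_k$. Put $n_k=5k-1$ and $\mu_k:=\frac1{25}-\frac1{9+n_k^2}$, so that $\cH_0$ acts on the $k$-th coefficient by multiplication by $\mu_k$. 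With $b_k:=\mu_k a_k$, the equation $\cL_0\phi=\lambda\phi$ becomes, up to the constant $c=\tfrac{375}2$ coming from $-125\sin 5y\,\partial_x$,
\begin{align*}
    \lambda\,\frac{b_k}{\mu_k}=-c\,(b_{k-1}-b_{k+1}),\qquad k\in\Z .
\end{align*}
The key structural fact is the sign pattern of $\mu_k$. We have $\mu_0<0$ (since $n_0=-1$ gives $9+1<25$) and $\mu_1=0$ (since $n_1=4$ gives $9+16=25$). For every other $k$, $\mu_k>0$ with $\mu_k\to\frac1{25}$. The vanishing $\mu_1=0$ forces $b_1=0$, so the chain decouples at $k=1$. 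I then work on the half-chain $k\le0$, which contains exactly one negative multiplier. This is precisely the situation covered by the continued-fraction instability criterion of Lemma \ref{prop:friedlander1998}. I will write $b_{k}/b_{k+1}$ for $k\le -1$ as a continued fraction in $\lambda$ and look for a real root $\lambda_0>0$ of the closing equation at $k=0$, where $b_1=0$. This requires checking the sign change of the resulting function of $\lambda$ between $\lambda\to0^+$ and $\lambda\to\infty$. The same continued-fraction bounds give geometric decay of $a_k$, so the eigenfunction $\phi_0$ is analytic and in particular smooth.

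\textbf{Step 2: the stable partner.} If $(b_k)$ solves the recurrence with $\lambda$, then $(-1)^k b_k$ solves it with $-\lambda$, because the right-hand side shifts $k$ by $\pm1$. This gives $\phi_0^-$ with coefficients $\pm(-1)^k a_k$. The $\cos(3x-y)$ coefficient is the $k=0$ entry, so it is unchanged by the factor $(-1)^k$. Multiplying $\phi_0^-$ by $-1$, which is allowed because eigenfunctions are only defined up to scaling, produces the sign in \eqref{Kol_equality}.

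\textbf{Step 3 (main obstacle): nonvanishing of the $k=0$ coefficient.} It remains to show $a_0\neq0$. Suppose instead $a_0=0$. Then $b_0=0$, and the nonzero tail $(b_k)_{k\le-1}$ solves the recurrence on a subchain where every $\mu_k>0$. On the span of those modes, $\cH_0$ is positive definite, and $\cL_0=\cJ_0\cH_0$ is similar to the skew-adjoint operator $\cH_0^{1/2}\cJ_0\cH_0^{1/2}$. Hence $\lambda$ would have to be purely imaginary, contradicting $\lambda_0>0$. The delicate point is making this restriction argument rigorous. I would do it by pairing the recurrence with $\bar b_k$ and summing, which gives $\lambda\sum_{k\le-1}|b_k|^2/\mu_k\in i\R$. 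The boundary terms vanish because $b_0=0$ and because of the decay established in Step 1. I expect the real difficulty to lie in Step 1, specifically in verifying the hypotheses of Lemma \ref{prop:friedlander1998} for this particular chain: one must control the tail continued fraction uniformly in $\lambda$ and confirm the sign change with the explicit numbers $\mu_k$.
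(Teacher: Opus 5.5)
Your proposal is essentially the paper's proof. It uses the same Fourier chain $n\equiv 4 \pmod 5$ at $x$-frequency $3$, the decoupling at $n=4$ because $h_4=0$ (your $\mu_1=0$), and the continued fraction on $n\le -1$ closed by the single negative multiplier at $n=-1$. That closing equation is solved by the intermediate value theorem via Lemma \ref{prop:friedlander1998}. Its hypotheses are immediate here, since $\beta_n>0$ for $n<-1$ with limit $\frac2{15}$, so this is not the real difficulty you expect. Geometric decay then follows from \eqref{fried_con1}, and the paper's stable partner $\phi_0(x,y+\pi)$ coincides with your $(-1)^k$ flip followed by multiplication by $-1$.

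Your Step 3 is correct but unnecessary. If $b_0=0$, then together with $b_1=0$ the three-term recurrence forces $b_{-1}=b_{-2}=\cdots=0$. The paper simply reads off $c_{-1}=\frac{4\lambda_0}{45}c_4\neq0$ from the $n=4$ equation.

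That $n=4$ equation (your $k=1$) is also the one detail you should not drop. The $\cos(3x+4y)$ coefficient $a_1=-c\,b_0/\lambda_0$ must be kept nonzero, even though $b_1=\mu_1a_1=0$; otherwise the eigen-equation fails at $k=1$. So do not truncate the ansatz to the half-chain $k\le 0$.
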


\begin{proof}

Our proof is constructive. We create the eigenfunction as a cosine series, and then determine the coefficient by the eigen relation.

We focus on the unstable eigenpair $(\lambda_0,\phi_0)$, whose eigen-relation \eqref{FLJH_0} reads as follows
\begin{equation}
    \label{lem1:Kol_pf_1}
    \cL _0 \phi _0 = -5 \sin{5y} \cdot \partial_{x} (\I + 25 \Delta ^{-1}) \phi _0 =\lambda _0 \phi _0.
\end{equation}
To solve this equation,  we consider an ansatz $\phi _0\in \cX$ \eqref{eqn: fourier} whose Fourier modes are nonzero only if $j = 1$ and $5\vert(1+n)$, i.e., 
\begin{align}
    \label{eqn:ansatz-power}
    \phi _0 = \sum _{\substack{n \in \Z \\ n\equiv 4 (\mathrm{mod}\ 5)}} c _n \cos (3 x + n y).
\end{align}
Throughout the proof, we always work on the subset $\J :=\{n \in \Z \mid n \equiv 4 (\mathrm{mod} \ 5)\}$. One will uncover further constraints on the Fourier coefficients as the analysis continue. It is straightforward to check that from our ansatz \eqref{eqn:ansatz-power} we get 
\begin{align}\label{lem1:Kol_pf_1.5}
    (\I + 25 \Delta ^{-1}) \phi _0 = \sum _{n \in \J} h _n c _n \cos (3 x + n y),\quad  h _n := \frac{n ^2 - 16}{n ^2 + 9}.
\end{align}
Thanks to the relation 
$ \displaystyle
    -5 \sin 5 y \; \partial _x \cos (3x + n y) = \frac{15}2 \cos (3 x + (n - 5) y) - \frac{15}2 \cos (3 x + (n + 5) y),$ 
we obtain 
\begin{align*}
    \cL _0 \phi _0 &= \frac{15}2 \sum _{n \in \J} (h _{n + 5} c _{n + 5} - h _{n - 5} c _{n - 5})  \cos (3 x + n y).
\end{align*}
Hence, the eigen-relation \eqref{lem1:Kol_pf_1} yields the inductive relationship
\begin{align}\label{lem1:Kol_pf2}
    \frac{15}2 (h _{n + 5} c _{n + 5} - h _{n - 5} c _{n - 5}) = \lambda _0 c _n, \qquad n \equiv 4 ({\rm mod }\ 5).
\end{align}
We note three relations: $h _4 = 0$, $h _{-1} =-\frac32$, and $h _n > 0$ for all the other $n \in \J$. The vanishing of $h_4$ implies that if we choose $c_9=0$, the relation $c_9 = c _{14} = c _{19} = \dots = 0$ holds. We take this ansatz from now on. For $n = 4$, the above relationship reduces to $-\frac{15}2 h _{-1} c _{-1} = \lambda _0 c _4$.

Next, we use dontinued fraction to determine the unstable eigenvalue. For $n \le -1$, we set $\beta _n= \frac{2}{15 h _n}$ and observe three relations $\beta_{-1}=-\frac{4}{45}$, $\lim _{n \to -\infty} \beta _n = \frac2{15}$ and $\beta _n > 0$ for all $n < -1$. Further consider the ratio $r _n =\frac{h_{n+5} c_{n+5}}{h_n c_n}$. The condition $h_4=0$ and the relation \eqref{lem1:Kol_pf2} yield that
\begin{align}\label{lem1:Kol_pf3}
   r _{-1} = 0;\quad r _n = \lambda _0 \beta _n + \frac1{r _{n - 5}},\quad n\equiv 4({\rm mod}\ 5),\quad n\le-1.
\end{align}
Solving this induction relation naturally leads to a continued fraction expression
\begin{align}
    r _{n} = \lambda _0 \beta _{n} +\cF_n(\lambda_0),
    \quad  \cF_n(\lambda_0)
    &:=[\lambda_0\beta_{n-5},\lambda_0\beta_{n-10},\lambda_0\beta_{n-15},\cdots]\\
    &:=\dfrac1{\lambda _0 \beta _{n-5} + \dfrac1{\lambda _0 \beta _{n-10} + \dfrac1{\lambda _0 \beta _{n-15} + \cdots}}}.\label{lem1:Kol_pf4}
\end{align}
Combining the relation $r_{-1}=0$ and the expression above, we end up with the equation $0=r _{-1} = \lambda _0 \beta _{-1} +\cF_{-1}(\lambda_0).$ This relation helps us determine the unstable eigenvalue $\lambda_0>0$. By Lemma \ref{prop:friedlander1998}, we realize that the continuous fraction expression $\cF _{-1}(\lambda) := [\lambda \beta_{-6}, \lambda \beta_{-11}, \cdots]$ is convergent and continuous for all $\lambda > 0$, with
$\lim _{\lambda \to 0 ^+}\cF_{-1}(\lambda) = 1,\,  \lim _{\lambda \to +\infty} \cF_{-1}(\lambda) = 0.$
By intermediate value theorem applied to the function $\cF_{-1}(\lambda)+\beta_{-1}\lambda$, there exists $\lambda = \lambda _0>0$ such that 
$\cF_{-1}(\lambda _0) = -\lambda _0 \beta _{-1} = \frac4{45} \lambda _0.$ This positive parameter $\lambda_0>0$ is our unstable eigenvalue.

Finally, we determine the Fourier coefficients. We observe that since $h_n$ is given by \eqref{lem1:Kol_pf_1.5} and the ratio $r_n$ is given by \eqref{lem1:Kol_pf4}, the numerical value of $c_4$ would uniquely determine all the nontrivial Fourier coefficients $\{c_n\}_{n\in \J\cap\Z_-}$. 
Thanks to the expression \eqref{lem1:Kol_pf4} and Lemma \ref{prop:friedlander1998}, for all $ n\in \J \cap\Z_-$, the ratio $r _n = \lambda _0 \beta _n + \cF _n(\lambda_0)$ is well-defined. Moreover, the limit \eqref{fried_con1} and Remark \ref{rmk:Fried_con} yield that 
\begin{align*}
    \lim _{n \to -\infty} \cF _n (\lambda_0) = \sqrt{\left(\frac{\lambda _0}{15} \right) ^2 + 1} - \frac{\lambda _0}{15}.
\end{align*}
Thus
\begin{align*}
  r _\infty :=   \lim _{n \to -\infty} r _n = \sqrt{\left(\frac{\lambda _0}{15} \right) ^2 + 1} + \frac{\lambda _0}{15} > 1,
\end{align*}
and consequently 
\begin{align*}
    \lim _{n \to -\infty} \frac{c _{n - 5}}{c _n} = \lim _{n \to -\infty} \frac{h _n}{h _{n - 5}} \cdot \frac1{r _{n - 5}} = \frac1{r _\infty} < 1.
\end{align*}
Since Fourier coefficients converge to zero geometrically, the eigenfunction 
\begin{align*}
    \phi _0 (x, y) = c_4\cos(3x+4y)+ \sum _{n \in \J \cap \Z _-} c _n \cos (3 x + n y)
\end{align*}
is analytic. Moreover, all the Fourier coefficients $c _n,\ n\in\J\cap\Z_- $ are nonzero.

One can directly verify
\begin{align*}
    \phi _0 ^- (x, y) =c_4\cos(3x+4y)+ \sum _{n \in \J \cap \Z _-} (-1) ^n c _n \cos (3 x + n y) = \phi _0 (x, y + \pi)
\end{align*}
is a stable eigenfunction of $\cL _0$, with eigenvalue $-\lambda _0$. The relation \eqref{Kol_equality} follows the fact that $c _{-1} \ne 0$. 
\end{proof}

\subsection{Proof of Proposition \ref{pro:Kolmogorov}}

In this section, we conclude the proof of Proposition \ref{pro:Kolmogorov} by showing that the eigenvalue $\pm \lambda_0$ that we identified in the last section is algebraically simple and isolated.

\begin{proof}[Proof of Proposition \ref{pro:Kolmogorov}]
    In Lemma \ref{lem_1:Kolmogorov} we have obtained a pair of eigenvalues $\pm \lambda _0$ with associated eigenfunctions $\phi _0$ and $\phi _0 ^-$.
    The uniqueness and simplicity of the unstable eigenvalue is guaranteed by the index formula from \cite{LinZeng2022}*{theorem 2.3}, which we cited as Lemma \ref{lem: lin-zeng} in \cref{sec:Hamiltonian}. Note that the invariant subspace of $\cH _0 = \frac1{25} + \Delta ^{-1}$ are the Fourier modes. Fourier modes in $\cX$ are given by \eqref{eqn: fourier}, and 
    \begin{align*}
        \cH _0 \cos (3 j x + n y) = 
        \left( \frac1{25} - \frac1{(3 j) ^2 + n ^2} \right) \cos (3 j x + n y).
    \end{align*}
    Since \eqref{eqn: fourier} requires $5 \mid j + n$, we know $\cH _0$ has only one negative direction which spans $\cX _- = \operatorname{Span} \{\cos (3x - y)\}$, and has a two dimensional kernel $\ker \cH _0 = \operatorname{Span} \{\cos 5 y, \cos (3x + 4y) \}$. All the other directions in $\cX _+$ are positive, and 
    \begin{align*}
        \cX = \cX _- \oplus \ker \cH _0 \oplus \cX _+.
    \end{align*}
    
    Let us verify the assumptions of Lemma \ref{lem: lin-zeng}. In our setting, $\mf X = \cX$ is reflexive, and the dual space is equivalent to $\cX$ itself, hence the duality pairing is the same as the inner product, and the dual operator is the same as the adjoint operator. In \cref{sec:functional} we verified (H1) and (H2). For (H3), since our $\cX _-$, $\ker \cH _0$, $\cX _+$ are mutually orthogonal, we conclude 
    \begin{align*}
        \left\{
            f \in \cX ^*: \langle f, u \rangle = 0, \forall u \in \cX _- \oplus \cX _+ 
        \right\} = \ker \cH _0 = \operatorname{Span} \{\cos 5 y, \cos (3x + 4y) \},
    \end{align*}
    which are in the domain of $\cJ$. Indeed, apart from the imposed symmetry, we are in the same setting as \cite{LinZeng2022}*{section 11.5}.
    
    We use the conclusion (iii) of \cref{lem: lin-zeng}. $\cH _0$ has exactly one negative direction, thus $n ^- (\cH _0) = 1$. Since we have already found one eigenfunction with a real eigenvalue $\lambda _0 > 0$, we know $k _r \ge 1$, and the index formula enforces $k _r = 1$ and $k _c = 0$, which means there are no other eigenvalue with positive real part, and $\lambda _0$ is simple. By (ii), $-\lambda _0$ is the only eigenvalue with negative real part. Moreover, because $\cL_0 = \cT _0 + \cK_0$ where $\cT _0 = \cJ _0 / 25$ and $\cK _0 = \cJ _0 \Delta^{-1}$ is compact, the essential spectrum of $\cL _0$ coincide with the essential spectrum of a transport operator $\cT _0$, and $\sigma (\cT _0) \subset i \R$ because $\cT _0$ is skew adjoint. Therefore $\sigma (\cL_0) \setminus i \R = \{\pm \lambda _0\}$.
\end{proof}

\section{Construction of the stationary Euler solution}\label{sec:stationary_Euler}

In this section, we prove \cref{pro:Euler_Steady,pro:Euler_spectral}. Construction of the steady state $\ue$ is proven in \cref{sec:fixed-point} using Banach fixed point theorem, and nontrivial Riesz projection is proven in \cref{sec:riesz}. 

\subsection{Perturbation of the sinusoidal flow}
\label{sec:fixed-point}

We provide a similar steady state to the work of \cite{coti2023stationary}. They perturbed the Kolmogorov flow $\psi _0 = \cos y$ to $\ye$, which satisfies a semilinear equation with quintic nonlinearity. In our construction, we used a cubic polynomial instead for simplicity. The tradeoff is that our stream function $\ye$ is not mean-zero. Nevertheless, the stream function still preserves the symmetry of the function class $\cX$ defined in \eqref{def:X}. For this reason, let us introduce the $\cY$ space, which is the preimage of $\cX$ under the Laplacian operator:
\begin{align}
    \label{def: Y2}
    \cY &= \left\{ 
        f \in H ^2 (\T ^2): f (x, y) = f (-x, -y) = f \left(x + \frac{2 \pi} 3, y\right) = f \left( \frac{2 \pi}{15} - x, \frac{2 \pi}{5} - y \right)
    \right\},
\end{align}
equipped with the weighted $H ^2$ norm 
\begin{align*}
    \| f \|_{\cY} := \| f \| _{L ^2 (\T ^2)} + \frac{1}{225} \| f \| _{\dot H ^2 (\T ^2)}.
\end{align*}
Now, we denote  
\begin{align}
\label{Ball_psi}
\cB _\psi &:= \left\{
    \psi \in \cY: \left\| 
        \psi - \cos 5y - \e \cos (3x + 4y) + \frac\e{75} \cos 10 y
    \right\|_{\cY} \le 2 \e ^2
\right\}, 
\end{align}
be the closed $2 \e ^2$ ball, $\e$ away from $\cos{5y}$, which is the stream function of the sinusoidal flow. We are going to construct non-shear steady state of 2D Euler in $\cB _\psi$.

\begin{lemma}
    \label{lem:fixed-point}
    Let $T _1$, $T _2$, $T _3$ be the Chebyshev polynomials:
    \begin{align}
        T _1 (s) &= s, &
        T_2 (s) &= 2 s ^2 - 1, &
        T_3 (s) &= 4 s ^3 - 3 s.
    \end{align}
    For every $\e > 0$ sufficiently small, there exist constants $\Ae, \Be \in [-\e, \e]$ and $\ye \in \cB _\psi$  
    such that $\Delta \ye = \Fe (\ye)$, with $\Fe$ given by a cubic polynomial 
    \begin{align}
        \label{eq:Fe-defn}
        \Fe (s) = -25 s + \e \left(
            \Ae s + \Be T_3 (s) + T_2 (s)
        \right).
    \end{align}
\end{lemma}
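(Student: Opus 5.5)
The plan is a Lyapunov--Schmidt reduction closed by a Banach fixed point in $\cB_\psi$. First I compute the kernel of $\Delta+25$ on $\cY$. A Fourier mode $\cos(3jx+ny)$ with $5\mid j+n$ lies in the kernel exactly when $9j^2+n^2=25$. This leaves only $\cos 5y$ and $\cos(3x+4y)$, since $(j,n)=(1,-4)$ violates $5\mid j+n$. Constants belong to $\cY$ but are not in the kernel; this is why $\ye$ need not have mean zero. Write $\Pi$ for the $L^2$-orthogonal projection onto $N:=\operatorname{Span}\{\cos5y,\cos(3x+4y)\}$. Then $(\Delta+25)^{-1}$ is bounded from $(I-\Pi)\cX$ (plus constants) into $(I-\Pi)\cY$, and it gains two derivatives. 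The constant in this bound is uniform because $|25-9j^2-n^2|\ge c(1+9j^2+n^2)$ on the remaining modes; this is what makes the weighted $H^2$ norm $\|\cdot\|_\cY$ natural.

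Next I set $\psi=\cos5y+\e\cos(3x+4y)+\eta$ with $\eta\perp N$. Fixing the kernel components of $\psi$ in this way is the normalization. The equation $\Delta\psi=\Fe(\psi)$ becomes
\begin{align*}
(\Delta+25)\eta=\e\bigl(\Ae\psi+\Be T_3(\psi)+T_2(\psi)\bigr)=:\e\,G(\eta,\Ae,\Be).
\end{align*}
The symmetries defining $\cY$ are preserved by composition with polynomials, so $G$ maps $\cY$ into $\cY$; $\cY\subset H^2\subset L^\infty$ makes it an algebra. I then split this into two parts. The range equation is $\eta=\e(\Delta+25)^{-1}(I-\Pi)G$. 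The bifurcation equations are $\Pi G(\eta,\Ae,\Be)=0$, which are two scalar equations for $(\Ae,\Be)$.

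To leading order, $T_2(\cos5y)=\cos10y$ and $(\Delta+25)^{-1}\cos10y=-\tfrac1{75}\cos10y$. This recovers the ansatz $\eta\approx-\tfrac{\e}{75}\cos10y$ and explains the center of $\cB_\psi$. The map $\eta\mapsto\e(\Delta+25)^{-1}(I-\Pi)G$ is Lipschitz with constant $O(\e)$ on $\cB_\psi$. It maps $\cB_\psi$ into itself, because the deviation from $-\tfrac\e{75}\cos10y$ comes from $\e$ times terms of size $O(\e)$, namely $\Ae\psi$, the cross terms in $T_2$, and $\Be T_3$. This gives the $2\e^2$ radius, provided $|\Ae|,|\Be|\le\e$.

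For the bifurcation equations I expand $\Pi G$. Projecting onto $\cos5y$ gives $\Ae+O(\e^2)+O(\e|\Be|)$, because $T_2(\psi)$ and $T_3(\cos5y)=\cos15y$ contribute nothing at order one. Projecting onto $\cos(3x+4y)$ gives $\e\Ae+3\e\Be+(\text{explicit }\e\text{-order contributions from }T_2)$. Here I use $T_3'(\cos5y)=3+6\cos10y$ and $4\cos5y\cdot\eta$. I solve the $2\times2$ system by a joint fixed point in $(\eta,\Ae,\Be)$, or equivalently by the implicit function theorem after rescaling $\Be$. I expect this to be the main obstacle. One must check two things by explicit computation: the reduced matrix is nondegenerate with the correct $\e$-scaling, and the resulting $\Ae,\Be$ really satisfy $|\Ae|,|\Be|\le\e$. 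This requires verifying that the $\cos(3x+4y)$-projection of $T_2(\psi)$ vanishes to sufficient order, using the specific frequencies ($10y$ against $3x\pm y$ and similar). If a term of the wrong size appears, I would use the freedom in the $\e^2$ corrections of $\eta$ to cancel it.
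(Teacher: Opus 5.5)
Your reduction is the paper's. With $\eta=\e\varrho$ you have the same ansatz, the same range equation and the same two projected equations, and your kernel computation and the self-map bound for frozen $(\Ae,\Be)$ are fine. The gap is that the step you defer as ``the main obstacle'' is the actual content of the lemma, and part of what you assert about it is wrong. Write $e_1=\cos5y$, $e_2=\cos(3x+4y)$.

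\emph{The $e_1$-equation.} It is not $\Ae+O(\e^2)+O(\e|\Be|)$. The term $T_2(\psi)$ contributes $4(e_1^2,\eta)\approx-\frac{4\pi^2}{75}\e$, so $\Ae\approx\frac{2}{75}\e$. This is harmless for $|\Ae|\le\e$.

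\emph{The $e_2$-equation.} It reads $2\pi^2\e(\Ae+3\Be)+4(e_1e_2,\eta)+\dots=0$. After dividing by $\e$, $\Be=-\frac{\Ae}{3}-\frac{2}{3\pi^2\e}(e_1e_2,\eta)+\dots$, so $\Be$ depends on $\eta$ with an $\e^{-1}$ coefficient. Hence $|\Be|\le\e$ holds only because $e_1e_2=\frac12[\cos(3x+9y)+\cos(3x-y)]\perp\cos10y$ and $\|\eta+\frac{\e}{75}\cos10y\|_{L^2}\le2\e^2$. Together these give $|\Be|\le(\frac13+\frac{4}{3\pi}+C\e)\e<\e$.

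\emph{Your fallback.} Using ``the freedom in the $\e^2$ corrections of $\eta$'' is not available. Once $(\Ae,\Be)$ are fixed, the range equation determines $\eta\perp N$ uniquely, and your normalization fixes the kernel part. So this orthogonality must be verified; it cannot be engineered.

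\emph{Closing the fixed point.} The same coupling means the closing step is not an $O(\e)$-perturbation. Your $O(\e)$-Lipschitz bound for the range map holds only with $(\Ae,\Be)$ frozen. In the coupled problem, $\Be$ responds to $\eta$ at cost $\e^{-1}$ but feeds back only through $-\frac{\e\Be}{200}\cos15y$. Their product is of order one, so no rescaling makes both cross-dependences $O(\e)$, and closing a contraction means checking order-one constants. A joint contraction does close, for instance in $\max(\|\eta\|_\cY,|\Ae|,\e|\Be|)$, with constant about $\frac{\sqrt6}{\pi}<1$; that constant comes from the $4(e_1^2,\eta)$ term your expansion dropped.

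\emph{What the paper does.} It nests the two fixed points in the variable $\varrho$. \cref{lemma:construction 2} gives $\Lip(\varrho\mapsto(A_\varrho,B_\varrho))\le1$. Combined with $\|\frac1{200}\cos15y\|_\cY=\frac{\sqrt2\pi}{100}$, this makes the paper's map $\cT$ a $\frac12$-contraction. The radius $2\e^2$ follows from $\frac{\sqrt2\pi}{3}+\frac{\sqrt2\pi}{100}<2$.

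\emph{If you prefer the implicit function theorem.} The rescaling that makes the linearization at $\e=0$ invertible is $\eta=\e\varrho$ together with dividing the $e_2$-equation by $\e$, as in the paper's $\cS_B$. The linearization is then invertible, because the loop $\delta\Be\to\delta\varrho\to\delta\Be$ vanishes since $(e_1e_2,\cos15y)=0$. But the theorem only gives $|\Ae|,|\Be|\le C\e$. To land in $[-\e,\e]^2$ and in $\cB_\psi$ you still need the first-order values $\Ae\approx\frac{2}{75}\e$ and $\Be\approx-\frac{226}{2925}\e$, plus a bound below $2$ on the $\cY$-norm of the $\e^2$-coefficient of $\eta$.
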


\begin{proof}
Let $\{e _1, e _2\} := \{\cos 5y, \cos (3x + 4y)\}$ be the orthogonal basis of $\ker (25 + \Delta) \subset \cY$. Throughout the proof, $(25+\Delta)^{-1}$ denotes the bounded inverse from $\ker(25+\Delta)^\perp \subset \cY$ to $\cY$, and $\P_{\ker^\perp}$ denotes the corresponding $L^2$ orthogonal projection. Our strategy is to identify solutions to the semilinear elliptic equation $\Delta\ye=\Fe(\ye)$ of the form  
\begin{align}\label{defn:varphi}
    \ye &= e _1 + \e e _2 + \e \vfe, \quad  (\vfe,e_i) =0, \qquad i=1,2.
\end{align}
Here $(\cdot,\cdot)$ refers to the $L ^2$ inner product. We refer to $\vfe$ as the \emph{first-order correction}. Then, finding $\ye \in \cB _\psi$ is equivalent to finding $\vfe \in \cB _\varrho$, defined as
\begin{align}
    \label{Ball_varphi}
    \cB _\varrho &:= \left\{ 
        \varrho \in \ker (25 + \Delta) ^\perp: \left\| \varrho + \frac1{75} \cos 10 y \right\|_{\cY} \le 2 \e
    \right\}.
\end{align}
Combining this ansatz in \eqref{defn:varphi} and the targeted structure of the nonlinearity $\Fe$ \eqref{eq:Fe-defn}, we observe that solving $\Delta \ye = \Fe (\ye)$ is equivalent to solving \eqref{defn:varphi} and the following:
\begin{align}
    \label{eqn: phi}
    (25 + \Delta) \vfe = \Ae \ye + \Be T_3 (\ye) + T_2 (\ye),
\end{align}
In the sequel, we will drop the $(\cdots)_\e$ for the sake of cleanness of notation. 

For $\varrho \in \cB _\varrho$, denote
\begin{align*}
    \psi _\varrho := e _1 + \e e _2 + \e \varrho,
\end{align*}
then expanding Chebyshev polynomials yields
\begin{align}
    \label{def:T_psi}
    A \psi _\varrho + B T _3 (\psi _\varrho) + T _2 (\psi _\varrho) &= A \cos 5y + B \cos 15 y + \cos 10 y \\
    & \qquad + \e (A + 3B + 6 B \cos 10y + 4 e _1) \cdot (e _2 + \varrho) + \e ^2 P, \\
    \text{where } P = P (\e, e _1, e _2, B, \varrho) & :=(12Be_1+2)(e_2+\varrho)^2+4B\e(e_2+\varrho)^3.
\end{align}
As $(25 + \Delta)$ is self-adjoint, solving \eqref{eqn: phi} is equivalent to solve $(A, B, \varrho)$ satisfying
\begin{align} \label{eqn: orthogonality}
    ( A \psi _\varrho + B T _3 (\psi _\varrho) + T _2 (\psi _\varrho),e_i ) _{L ^2} = 0, \quad i=1,2,
\end{align}
and 
\begin{align}\label{eqn:pseduinverse}
    \varrho = (25 + \Delta) ^{-1} (A \psi _\varrho + B T_3 (\psi _\varrho) + T_2 (\psi _\varrho)).
\end{align}
Plugging in \eqref{def:T_psi}, by orthogonality, \eqref{eqn: orthogonality} expands to 
\begin{equation}
\begin{aligned}
   & 2 \pi ^2 A + \e \cdot 6 B ( e_1 \cos 10 y, \varrho ) + 4 \e ( e_1^2, \varrho ) + \e ^2 ( P,  e_1 ) = 0,\\
   & 2 \pi ^2 (A + 3 B) + 6 B ( e_2 \cos 10 y, \varrho ) + 4 ( e_2 e_1, \varrho ) + \e ( P,e_2 ) = 0.
\end{aligned}
\end{equation}
which is equivalent to 
\begin{subequations}
\begin{align}
    \label{eqn: A'}
    A &= -6 B \e \cdot \frac1{2 \pi ^2} ( e_1 \cos 10 y, \varrho ) - 4 \e \cdot \frac1{2 \pi ^2} ( e_1^2, \varrho ) - \e ^2 \cdot \frac1{2 \pi ^2} ( P, e_1 )=:\cS _A, \\
    \label{eqn: B'}
    B &= - \frac{A + 4 \cdot \frac1{2 \pi ^2} (  e_1e_2, \varrho ) + \e \cdot \frac1{2 \pi ^2} ( P, e_2 )}{3 + \frac6{2 \pi ^2} ( e_2 \cos 10 y, \varrho )}=:\cS _B.
\end{align}
\end{subequations}
We claim the following Lemma.  

\begin{lemma}
    \label{lemma:construction 1}
    If $\e$ is sufficiently small, then for every $\varrho \in \cB _\varrho$,
    \begin{align*}
        \cS: (A, B) \mapsto (\cS _A, \cS _B ) 
    \end{align*}
    is a contraction in $[- \e, \e] ^2$, measured in $\ell ^\infty$ norm.
\end{lemma}

Based on the above Lemma, Banach fixed-point theorem implies for $\e$ sufficiently small and $\varrho \in \cB _\varrho$, $\cS$ has a unique fixed point in $[- \e, \e] ^2$, and we define that to be $(A _\varrho, B _\varrho)$. Now \eqref{eqn:pseduinverse} is equivalent to 
\begin{equation}
   \varrho= \cT  (\varrho) := (25 + \Delta) ^{-1} (A _\varrho \psi _\varrho + B _\varrho T_3 (\psi _\varrho) + T_2 (\psi _\varrho)).
\end{equation}
To finish the proof of Lemma \ref{lem:fixed-point}, it suffices to prove $\cT $ is a contraction in $\cB _\varrho$ and it requires to build the Lipshitz bound for the mapping $\varrho \mapsto (A _\varrho, B _\varrho)$.
\begin{lemma}\label{lemma:construction 2}
    $\Lip (\varrho \mapsto (A _\varrho, B _\varrho)) \le 1$.
\end{lemma}
We postpone the proof of Lemma \ref{lemma:construction 1} and Lemma \ref{lemma:construction 2} for now and  show  the mapping $\cT $ is a contraction in $\cB _\varrho$.
 To verify this map is well-defined, we only need to show 
\begin{align}
    \label{eqn: boundedness-Tphi}
    \left\|\cT  ( \varrho) + \frac1{75} \cos 10 y \right\|_{\cY} \le 2 \e,
\end{align}
because $\cT  ( \varrho)$ is clearly orthogonal to $\ker (25 + \Delta)$. This can be shown by plugging in \eqref{def:T_psi}:
\begin{align}
    \notag
    \cT  ( \varrho)
    &= (25 + \Delta) ^{-1} \P _{\ker ^\perp} (A _\varrho \psi _\varrho + B _\varrho T_3 (\psi _\varrho) + T_2 (\psi _\varrho)) \\
    \label{eqn: PKerP}
    &= (25 + \Delta) ^{-1} (B _\varrho \cos 15y + \cos 10 y) \\
    \notag
    & \qquad + (25 + \Delta) ^{-1} \P _{\ker ^\perp} \left[ 
    \e \left( 
        A _\varrho + 3 B _\varrho + 6 B _\varrho \cos 10 y + 4 e _1
    \right) (e _2 + \varrho) + \cO (\e ^2)
    \right] \\
    \notag
    &= -\frac{B _\varrho}{200} \cos 15y - \frac1{75} \cos 10y \\
    & \qquad + \e \cdot (25 + \Delta) ^{-1} \P _{\ker ^\perp} \left[ 
    4 e _1 \cdot \left(e _2 - \frac1{75} \cos 10y\right)
    \right] + \cO (\e ^2).
    \label{eqn: Tphi-rhs}
\end{align}
Here the remainder term $\cO (\e ^2)$ is measured in $\cY$ norm. Note that $P (\e, e _1, e _2,  B _\varrho, \varrho)$, a polynomial of an $H ^2$ function, is still an $H ^2$ function, by the two-dimensional product estimate $\|fg\|_{H^2}\le C\|f\|_{H^2}\|g\|_{H^2}$. Moreover, 
\begin{align}
    \label{eqn: cos15}
    \left\| 
        \cos 15y
    \right\|_{\cY} = \left\| \cos 15 y \right\| _{L ^2} + \frac{1}{225} \left\| \cos 15 y \right\| _{\dot H ^2}
    = 2 \sqrt 2 \pi,
\end{align}
and 
\begin{align*}
    & \P _{\ker ^\perp} \left[ 
    4 e _1 \cdot \left(e _2 - \frac1{75} \cos 10y\right)
    \right] = 2\cos(3x+9y) + 2\cos(3x-y) - \frac2{75}\cos15y \\
    \leadsto \qquad 
    & \left\| (25 + \Delta) ^{-1} \P _{\ker ^\perp} \left[ 
    4 e _1 \cdot \left(e _2 - \frac1{75} \cos 10y\right)
    \right] \right\| _{\cY} \\
    & \qquad \le \frac2{65} \lVert \cos(3x+9y) \rVert _\cY + \frac2{15} \lVert \cos(3x-y) \rVert _\cY + \frac1{200} \cdot \frac2{75} \left\| \cos15y \right\| _\cY \\
    & \qquad \le \left(
        \frac2{65} + \frac2{15} + \frac1{7500}
    \right) 2 \sqrt 2 \pi \le \frac{\sqrt 2 \pi}3.
\end{align*}
Therefore, \eqref{eqn: Tphi-rhs} can be bounded by 
\begin{align}
\left\|\cT (\varrho) + \frac1{75}\cos10y \right\|_{\cY}
\le \frac{2\sqrt2\pi}{200} \e + \frac{\sqrt 2 \pi}3 \e + \cO (\e ^2) \le 2 \e 
\end{align}
for sufficiently small $\e$. Thus, $\cT$ is well-defined. Similarly, by \eqref{eqn: PKerP}, we have 
\begin{align*}
    \|\cT (\varrho_1) -\cT  (\varrho_2)\|_{\cY} &\le C \e |A _{\varrho _1} - A _{\varrho _2}| + \left( \frac{\sqrt 2 \pi}{100} + C \e\right) |B _{\varrho _1} - B _{\varrho _2}| + C \e \|\varrho _1 - \varrho _2\| _{\cY} \\
    & \le \frac{1}{2} \|\varrho_1 - \varrho_2\| _{\cY},
\end{align*}
applying \cref{lemma:construction 2}. By applying Banach fixed-point theorem to $\cT$ in $\cB _\varrho$, there exists a fixed point $\vfe$ of $\cT$ in $\cB _\varrho$. Therefore, $\ye := \cos{5y} + \e \cos{(3x+4y)} + \e \vfe$ is a steady state solving $\Delta \ye = \Fe (\ye)$ with $ \Fe (s) = -25 s + \e (
\Ae s + \Be T_3 (s) + T_2 (s))$, where $\Ae = A _{\vfe} \in [-\e, \e]$ and $\Be = B _{\vfe} \in [-\e, \e]$.
\end{proof}

Now we prove \cref{lemma:construction 1} and \cref{lemma:construction 2}.

\begin{proof}[Proof of \cref{lemma:construction 1}]
    We first verify that $|\cS _A| \le \e$, and the Lipschitz constant is $\cO (\e)$ as long as $\e$ is sufficiently small. This is because by Cauchy--Schwartz, we have 
    \begin{align*}
        \left|(e _1 ^2, \varrho)\right| &\le \frac1{75} \left|(e _1 ^2, \cos 10 y)\right| + \left|\left( e _1 ^2, \varrho+ \frac1{75} \cos 10 y \right)\right| \\
        &\le \frac{\pi ^2}{75} + \sqrt 2 \pi \left\| \varrho + \frac1{75} \cos 10 y \right\| _{L ^2} \le \frac{\pi ^2}{75} + \cO (\e).
    \end{align*}
    Hence, combined with $|B| \le \e$, we get
    \begin{align*}
        |\cS _A| \le \frac2{75} \e + \cO (\e ^2) \le \e.
    \end{align*}
    These bounds are uniform in $\varrho\in\cB_\varrho$: the ball is bounded in $H^2$, and $P$ is polynomial in $B$ and $\varrho$. Next, we focus on the map $\cS _B$. By orthogonality, 
    \begin{align*}
        & \left|\left( e _1 e _2, \varrho \right)\right|, \left|\left( e _2 \cos 10 y, \varrho \right)\right| \le 2 \sqrt 2 \pi \e, \\
        & (P, e _2) = \left(2 \left(e _2 + \varrho \right) ^2, e _2 \right) _{L ^2} + \cO (\e) = \left(2 \left(e _2 - \frac1{75} \cos 10 y \right) ^2, e _2 \right) _{L ^2} + \cO (\e) = \cO (\e).
    \end{align*}
    Therefore, the denominator of \eqref{eqn: B'} is $3 \pm \cO (\e)$, whereas the numerator is bounded by $\e + \frac{4 \sqrt 2}{\pi} \e + \cO (\e ^2)$. Therefore, we have the bound $\left|\cS _B\right| \le \frac{\e + \frac{4 \sqrt 2}\pi \e + C \e ^2}{3 - C \e} < \e$. Clearly, the Lipschitz constant is bounded by $\frac13 + C\e < 1$, so $\cS$ is indeed a contraction.
\end{proof}

\begin{proof}[Proof of \cref{lemma:construction 2}]
    We now study the Lipschitz property of $\varrho \mapsto (A _\varrho, B _\varrho)$.
    With $\e,e_1,e_2$ fixed, write $P(\varrho,B):=P(\e,e_1,e_2,B,\varrho)$. Given $\varrho _1, \varrho _2 \in \cB _\varrho$, the $H^2$ product estimate gives
    \begin{align*}
    \|P(\varrho_1,B _{\varrho _1})-P(\varrho_2, B _{\varrho _2})\|_{L^2}
    \le C\bigl(\|\varrho_1-\varrho_2\|_{\cY}+|B _{\varrho _1}-B _{\varrho _2}|\bigr)
    \end{align*}
    uniformly on the prescribed ball. The equation \eqref{eqn: A'} yields 
    \begin{align*}
        A _{\varrho _1} - A _{\varrho _2} &= -6 B _{\varrho _1} \e \cdot \frac1{2 \pi ^2} ( e _1 \cos 10 y, \varrho _1 - \varrho _2 ) 
        -6\e\cdot\frac1{2\pi^2}(B _{\varrho _1}-B _{\varrho _2})(e _1 \cos 10 y,\varrho_2) \\
        & \qquad - 4 \e \cdot \frac1{2 \pi ^2} ( e _1 ^2, \varrho _1 - \varrho _2 )
        - \e ^2 \cdot \frac1{2 \pi ^2} ( P (\varrho _1, B _{\varrho _1}) - P (\varrho _2, B _{\varrho _2}), e _1 ),\\
       \leadsto \quad |A _{\varrho _1} - A _{\varrho _2}|   &\le C \e \| \varrho _1 - \varrho _2 \| _{\cY} + C \e |B _{\varrho _1} - B _{\varrho _2}| .
    \end{align*}
    As for \eqref{eqn: B'}, the denominator is $\frac6{\sqrt 2 \pi}$-Lipschitz in $\varrho$, and for the numerator, we have 
    \begin{align*}
        & \left| A _{\varrho _2} - A _{\varrho _1} + 4 \cdot \frac1{2 \pi ^2} (e _1 e _2, \varrho _2 - \varrho _1 ) + \e \cdot \frac1{2 \pi ^2} ( P (\varrho _2, B _{\varrho _2}) - P (\varrho _1, B _{\varrho _1}), e _2 ) \right| \\
        & \qquad \le |A _{\varrho _2} - A _{\varrho _1}| + \frac4{\sqrt 2 \pi} \| \varrho _2 - \varrho _1 \| _{L ^2} + C \e \| \varrho _2 - \varrho _1 \| _{\cY} + C \e |B _{\varrho _2} - B _{\varrho _1}|.
    \end{align*}
    Therefore 
    \begin{align*}
        |B _{\varrho _1} - B _{\varrho _2}| &\le \frac1{3 + \cO (\e)} \left(
            |A _{\varrho _2} - A _{\varrho _1}| + \frac4{\sqrt 2 \pi} \| \varrho _2 - \varrho _1 \| _{L ^2} + C \e \| \varrho _2 - \varrho _1 \| _{\cY} + C \e |B _{\varrho _2} - B _{\varrho _1}|
        \right) \\
        & \qquad + \frac1{(3 + \cO (\e)) ^2} \cdot \frac6{\sqrt 2 \pi} \| \varrho _1 - \varrho _2 \| _{L ^2}.
    \end{align*}
    Combined, we conclude after absorbing the terms proportional to $\e(|A _{\varrho _1}-A _{\varrho _2}|+|B _{\varrho _1}-B _{\varrho _2}|)$ for sufficiently small $\e$,
    \begin{align*}
        |A _{\varrho _1} - A _{\varrho _2}| &\le C \e \| \varrho _1 - \varrho _2 \| _{\cY}, & 
        |B _{\varrho _1} - B _{\varrho _2}| &\le \frac1{\sqrt2 \pi} \left(\frac43 + \frac69 + C \e \right) \| \varrho _1 - \varrho _2 \| _{\cY}.
     \end{align*}
    Note that $\frac1{\sqrt2\pi}\left(\frac43+\frac69\right) = \frac{\sqrt2}\pi < 1$; therefore $\varrho \mapsto (A _\varrho, B _\varrho)$ is $1$-Lipschitz.
\end{proof}

Based on the proof of \cref{lem:fixed-point,lemma:construction 1,lemma:construction 2}, we can calculate the $\ye$ and $\Fe$ iteratively.

\begin{remark} 
Assume a power series expansion 
\begin{align*}
    \Ae &= a _0 + \e a _1 + \e ^2 a _2 + \cdots, &
    \Be &= b _0 + \e b _1 + \e ^2 b _2 + \cdots, \\
    \vfe &= \varrho _0 + \e \varrho _1 + \e ^2 \varrho _2 + \cdots, &
    \ye &= \psi _0 + \e \psi _1 + \e ^2 \psi _2 + \cdots,
\end{align*}
with 
\begin{align*}
    \psi _0 &= \cos 5y, & 
    \psi _1 &= \cos (3 x + 4 y) + \varrho _0, & 
    \psi _{i + 1} &= \varrho _i, \qquad i \ge 1.
\end{align*}
The right-hand side of \eqref{eqn: phi} also becomes a series expansion in $\e$, and \eqref{eqn: orthogonality} allows us to solve $a _i$, $b _i$ inductively. We computed the following series expansion using Mathematica\textsuperscript{\upshape\tiny\copyright}:
\begin{align*}
    \Fe (s) &= -25 s + \e T_2 (s) + \e (A _\e s + B _\e T_3 (s)) \\
    &= -25 s + \e (2 s ^2 - 1) + \left(
    \frac{2}{75} \e ^2 
    + \frac{7005344}{71296875} \e ^4
    + \cO (\e ^6)
    \right) s \\
    & \hspace{9.2em} + \left( -\frac{226}{2925} \e ^2 +\frac{731824}{2780578125}\e ^4 
    + \cO (\e ^6)
    \right) (4 s ^3 - 3 s), \\ 
    \ye (x, y)
    & = 
    \cos 5 y + \e \left(
        \cos (3 x + 4 y)
        -\frac1{75} \cos 10 y 
    \right) \\
    & {} \hphantom{{}= \cos 5 y{}} + \e^2 \left(
    \frac2{15} \cos (3 x - y)
    - \frac2{65} \cos (3 x + 9 y)
    + \frac{38}{73125}  \cos 15 y
    \right) + \cO (\e ^3).
\end{align*}
In particular, we have $a _{2 i} = b _{2 i} = 0$ for any $i \in \N$ due to parity. \cref{fig:symmetry} uses this series expansion up to $\e ^7$. One can follow \cite{coti2023stationary} to prove that $\Ae$ and $\Be$ are analytic functions of $\e$. But the exact power series in terms of $\e$, or the analyticity, is not needed for our proof. 
\end{remark}

\begin{remark}
    It is possible that $\ye$ is not mean-zero. Nevertheless, we can shift the stream function by a constant without changing the velocity or the vorticity. Indeed, $\ome = \Delta \ye \in \cX$.
\end{remark}

Based on the explicit form $\Delta \ye = \Fe (\ye)$, and $\|\ye-\psi_0\|_{\cY} \le C\e$, by iterations on elliptic estimates, we have the uniform estimate on the Sobolev space.

\begin{proof}[Proof of \cref{pro:Euler_Steady}]
    Lemma \ref{lem:fixed-point} provides the construction of the steady state $\ye$ and function $\Fe$, up to $\cO (\e ^2)$ error. For every $k \ge 2$, we have
    \begin{align*}
        \| \Delta \ye - \Delta \psi _0 \| _{H ^k} & \le \| \Fe (\ye) - F _0 (\ye) \| _{H ^k} + \| F _0 (\ye) - F _0 (\psi _0) \| _{H ^k} \\
        & \le C \e \left(1 + \| \ye \| _{H ^k} ^3 \right) + 25 \| \ye - \psi _0 \| _{H ^k}.
    \end{align*}
    Here we used that $\Fe$ is a cubic polynomial which is $\e$ away from $F _0$, and $F _0 (s) = -25 s$. Elliptic regularity allows us to show the $H ^{k + 2}$ bound inductively, starting with $\| \ye - \psi _0 \| _{H ^2} \le C \e$ given by Lemma \ref{lem:fixed-point}. Similarly, the $\cO (\e ^2)$ error is bounded by $C \e ^2$ in every $H ^k$ norm. Analyticity follows the semilinear elliptic theory since $\Fe$ is a cubic polynomial with constant coefficients. A quantitative description can be found in \cite{coti2023stationary}*{lemma 2.11}.
\end{proof}

\subsection{The Riesz projection}
\label{sec:riesz}

In this section we show item 2.~in \cref{pro:Euler_spectral}. We begin with the following lemma, which gives an expression for $\cL \Delta \omE$.

\begin{lemma}
    \label{lem:L-Delta-omE} 
    Let $\uE$ be a stationary Euler solution, and $\cL$ be the associated linearize operator \eqref{eq:w_cpt}. Suppose its vorticity $\omE$ and stream function $\yE$ are related by $\omE = F (\yE)$, where $F$ is smooth and has nonvanishing derivative over the range of $\yE$. Then 
    \begin{align*}
        \cL \Delta \omE = -F'' (\yE) \uE \cdot \nabla |\nabla \yE| ^2 = -F'' (\yE) \uE \cdot \nabla |\uE| ^2.
    \end{align*}
\end{lemma}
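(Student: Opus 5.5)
Since $\cL$ is linear, the plan is to compute $\cL \Delta\omE$ directly from the factorization \eqref{eqn:defn-J-H}, $\cL = \cJ\cH$ with $\cJ = F'(\yE)\uE\cdot\nabla$ and $\cH = -\frac1{F'(\yE)} + \Delta^{-1}$. Set $g := \Delta\omE$. Because $\omE = \Delta\yE$ up to an additive constant (the stream function need not be mean-zero, but constants are annihilated by $\uE\cdot\nabla$), we have $\Delta^{-1} g = \omE + c$ for some constant $c$. Then
\begin{align*}
    \cL g = F'(\yE)\,\uE\cdot\nabla\left( -\frac{\Delta\omE}{F'(\yE)} + \omE \right).
\end{align*}
The second term vanishes by \eqref{semi_rel_1}. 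Since $\uE\cdot\nabla F'(\yE) = 0$, the factor $1/F'(\yE)$ passes through $\uE\cdot\nabla$, so $\cL \Delta\omE = -\uE\cdot\nabla \Delta\omE$. This could also be read off directly from $\cL g = -\uE\cdot\nabla g + F'(\yE)\uE\cdot\nabla\Delta^{-1}g$.

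The remaining task is to compute $\Delta\omE = \Delta F(\yE)$ by the chain rule:
\begin{align*}
    \Delta F(\yE) = F''(\yE)|\nabla\yE|^2 + F'(\yE)\Delta\yE = F''(\yE)|\nabla\yE|^2 + F'(\yE)F(\yE).
\end{align*}
Apply $-\uE\cdot\nabla$ to this. The term $F'(\yE)F(\yE)$ is a function of $\yE$, so it is transported trivially. For the first term, $\uE\cdot\nabla F''(\yE) = F'''(\yE)\,\uE\cdot\nabla\yE = 0$, since $\uE = \nabla^\perp\yE$ is orthogonal to $\nabla\yE$. Hence
\begin{align*}
    \cL\Delta\omE = -F''(\yE)\,\uE\cdot\nabla|\nabla\yE|^2 ,
\end{align*}
and $|\nabla\yE| = |\nabla^\perp\yE| = |\uE|$ gives the second form.

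The computation itself is routine. The main point requiring care is regularity: $\Delta\omE$ must lie in the domain of $\cL$, and $F''$ and $F'''$ must make sense. Both follow from the smoothness assumption on $F$ (in our application $\Fe$ is a cubic polynomial) together with the smoothness of $\yE$ from \cref{pro:Euler_Steady}. The other detail is the constant ambiguity in $\Delta^{-1}$, which the transport derivative removes.
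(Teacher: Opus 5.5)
Your proof is correct and follows essentially the paper's route: the paper also expands $\Delta\omE = F'(\yE)\omE + F''(\yE)|\nabla\yE|^2$ by the chain rule, and also uses the factorization $\cL=\cJ\cH$ together with the fact that any function of $\yE$ is invariant under $\uE\cdot\nabla$. The only difference is where the $\omE$ terms cancel. The paper cancels them inside $\cH$, obtaining $\cH\Delta\omE = -\frac{F''(\yE)}{F'(\yE)}|\nabla\yE|^2$, an identity it reuses in \cref{rem:rescale-bad}. You instead first reduce to $\cL\Delta\omE = -\uE\cdot\nabla\Delta\omE$ and then expand $\Delta\omE$.
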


\begin{proof}
    Recall $\omE = F (\yE) = \Delta \yE$, so $\nabla \omE = F' (\yE) \nabla \yE$, and 
    \begin{align*}
        \Delta \omE = \div (\nabla \omE) = F' (\yE) \Delta \yE + F'' (\yE) |\nabla \yE| ^2 = F' (\yE) \omE + F'' (\yE) |\nabla \yE| ^2.
    \end{align*}
    Therefore, 
    \begin{align}
        \label{eqn: HDomE}
        \cH \Delta \omE = -\frac1{F' (\yE)} \Delta \omE + \omE = -\frac{F'' (\yE)}{F' (\yE)} |\nabla \yE| ^2,
    \end{align}
    and 
    \begin{align*}
        \cL \Delta \omE = \cJ \left( 
            -\frac{F'' (\yE)}{F' (\yE)} |\nabla \yE| ^2
        \right) = -F'' (\yE) \uE \cdot \nabla |\nabla \yE| ^2 = -F'' (\yE) \uE \cdot \nabla |\uE| ^2.
    \end{align*}
    This completes the proof of the desired identity.
\end{proof}

\begin{remark}
    \label{rem:eigenfunction-bad}
    For shear flow or cellular flow,  let $\phi ^*$ be an eigenfunction of $\cL ^*$ with nonzero eigenvalue $\lambda $, we have $( \Delta \omE, \phi ^* ) = 0$, because
    \begin{align*}
        ( \Delta \omE, \phi ^* ) = \frac1\lambda ( \Delta \omE, \cL ^* \phi ^* ) = \frac1\lambda \left( 
            \cL \Delta \omE, \phi ^*
        \right). 
    \end{align*}
    For pressure-less flows such as a shear flow $u _0 = U (y) e _1$ or a radial flow $u _0 = U (r) e _\phi$, $\cL \Delta \omE$ is zero because $|u _0| = $ constant along streamlines. For the cellular flow, $F _0$ is a linear function, so $F _0 '' = 0$, and $\cL \Delta \omE$ vanishes as well.
\end{remark}

We would like to perturb a shear flow $\omega _0 = -U' (y)$ to something else $\ome$ such that $\cL _\e \Delta \ome$ and the inner product $( \Delta \ome, \fe ^* )$ are nonzero. Unfortunately, for a Kolmogorov flow, it is both pressure-less and with linear $F$, so if we perturb it with an $\e$ perturbation, $\cL _\e \Delta \ome$ and the inner product will still vanish at $\cO (\e)$ order. We could nevertheless hope the inner product stays nontrivial at $\cO (\e ^2)$ order. It is also important that the perturbation $\ome$ cannot stay in the kernel of $\cL _0 = -U \partial _x (\I + 25 \Delta ^{-1})$, since otherwise either $F$ remains an affine function and $F '' = 0$, or $\ue$ is another shear flow \footnote{
If $\ome \in \ker \cL _0$ then $\partial _x (\Delta + 25) \ye = 0$, so $(\Delta + 25) \ye = f (y)$ is a function of $y$, and $\ye = \tilde \psi _\e + g (y)$ for some $\tilde \psi _\e \in \ker (\Delta + 25)$ and $g (y) = (\Delta + 25) ^{-1} f (y)$ is another function of $y$. The semilinear equation it satisfies is $F (\ye) = \Delta \ye = -25 \tilde \psi _\e + g '' (y)$. If $\tilde \psi _\e$ is nontrivial then taking partial derivative in $x$ shows $F ' (\ye) = -25$, so $F'' (\ye) = 0$. If $\tilde \psi _\e = 0$ then $\ye = g (y)$ is a shear flow.
}, and both cases annihilate $\cLe \Delta \ome$. Therefore, we need a perturbation in the style of \cite{coti2023stationary}. Note that the examples coming from a direct application of the method in \cite{coti2023stationary} would not produce the example we want.

\begin{remark}
\label{rem:rescale-bad}
In \cite{coti2023stationary}, the authors study the non-shear steady state near the Kolmogorov flow $(\sin{y},0) ^\top$. The stream function $\tilde \psi _\e$ of the associated steady state is even-even symmetric and solves a semilinear elliptic equation $\Delta \tilde \psi _\e = -\tilde \psi _\e + \e (\Ae \tilde \psi _\e + \Be \tilde \psi _\e ^3 + \frac15 \tilde \psi _\e ^5)$. However, the Kolmogorov flow has no unstable modes. The natural rescaling $\ye (x, y) := \tilde \psi _\e (5x, 5y)$ then gives rise to unstable steady states near our sinusoidal flow $(5\sin{5y}, 0) ^\top$. Unfortunately, the forcing associated with such a steady state has trivial projection to the most unstable mode. Below, we show $(\Delta \ome, \cHe \fe ^-) = 0$ by proving $\cHe \Delta \ome$ and a most stable mode $\fe ^-$ have disjoint frequency support. 

Since $\ye$ is rescaled from $\tilde \psi _\e$, we know $\ye$ is even-even symmetric and $\frac{2\pi}5$-periodic in both $x$ and $y$ directions, and this symmetry is inherited by $\cHe \Delta \ome$ due to \eqref{eqn: HDomE}. Therefore its frequency is supported in $\Z(5,0) + \Z(0,5) = (5 \Z) ^2$:
\begin{align*}
    \cF _i:= \left\{ 
        (m, n) \in \Z ^2 \mid \int _{\T ^2} \cHe \Delta \ome \cos{mx} \cos{ny} \d x \d y \ne 0 
    \right\} \subset \Z(5,0) + \Z(0,5).
\end{align*}
Moreover, \cite{coti2023stationary} implies $\tilde \psi _\e$ has a power series expansion $\tilde \psi _\e = \sum _{i = 0} ^\infty \e ^i \tilde \psi _i$, so $\psi _i (x, y) = \tilde \psi _i (5 x, 5 y)$, hence each $\psi _i$ is even-even symmetric and its wavenumbers are also multiple of 5. 

Next, we show the Fourier support of $\fe ^-$ is disjoint from $5 \Z \times 5 \Z$. Similar to $\ye$, the linearized Euler operator also has a series expansion $\cLe = \sum _{i = 0} ^\infty \e ^i \cL _i$,
and each $\cL _i$ is $\frac{2 \pi}5$-periodic in both direction.
Moreover, all the most stable eigenvalues and their associated stable modes of $\cLe$ can be written into $\e$ expansion. Let $\lae := \sum _{i = 0} ^\infty \e ^i \lambda _i$ be a most stable eigenvalue and $\fe ^- := \sum _{k = 0} ^\infty \e^ k \phi _i ^-$ be a most stable mode, then $ \cLe \fe ^- = \lae \fe ^-$ gives a system of inductive equations on $\lambda _i$ and $\phi _i$: $\sum _{i + j = k} (\cL _i - \lambda _i) \phi _j ^- = 0$. For example, at $k = 0$, we have $(\cL _0 - \lambda _0) \phi _0 ^- = 0$, and in particular $\phi _0 ^-$ is the most stable mode for the sinusoidal flow $(5 \sin{5y}, 0) ^\top$. Now, decompose $\phi _i ^-$ by parity:
\begin{equation}
    \phi ^- _i := \phi _i ^{oo} + \phi _i ^{oe} + \phi _i ^{eo} + \phi _i ^{ee},
\end{equation}
where $o\_$ ($e\_$) means odd (even) in $x$ and  $\_o$ ($\_e$) means odd (even) in $y$. By \cites{friedlander1998instability,MeshalkinSinai1961}, we know the frequency support of $\phi _0 ^{ee}$ lies in $(\pm1, 0) + \Z(5,0) + \Z(0,5)$. By the inductive equation, we can show this is also true for general $\phi _i ^{ee}$, $i \ge 0$. Therefore, the Fourier support of $\fe ^{ee}$ is supported in $(\pm1, 0) + \Z(5,0) + \Z(0,5)$.

By the above calculation, we have $(\Delta \ome, \cHe \fe ^-) = (\cHe \Delta \ome, \fe ^-) = (\cHe \Delta \ome, \fe ^{ee}) = 0$ by orthogonality, and thus by \cref{lem:riesz} the static forcing has trivial projection to the most unstable modes.
\end{remark}

Directly rescaling the solution from \cite{coti2023stationary} fails. However, if we instead restrict ourselves to the symmetry class $\cX$ and create a new steady state by \cref{pro:Euler_Steady}, then for $0 < \e \ll 1$, the projection forcing term $\P _{\lambda _\e} (\Delta \ome)$ is nonzero.

\begin{proof}[Proof of Proposition \ref{pro:Euler_spectral}]

Item~1.~and item~3.~are proven in \cref{sec:spec-euler} following standard spectral analysis.  
It remains to show that 
\begin{equation}
    \lim _{\e \to 0} \frac1{\e ^2} \P _{\lambda _\e} (\Delta \ome) = \lim _{\e \to 0} \frac1{\e ^2} \frac{( \Delta \ome, \fe ^* )}{( \fe, \fe ^* )} \fe \neq 0.
\end{equation}
By \cref{rem:eigenfunction-bad}, this is equal to 
\begin{equation}
    \lim _{\e \to 0} \frac1{\e ^2} \P _{\lambda _\e} (\Delta \ome) = \lim _{\e \to 0} \frac1{\e ^2} \frac1{\lambda _\e} \frac{( \cLe \Delta \ome, \fe ^* )}{( \fe, \fe ^* )} \fe = \left ( \lim _{\e \to 0} \frac1{\e ^2} \cLe \Delta \ome, \phi _0 ^* \right) \cdot \frac{\phi _0}{\lambda _0 ( \phi _0, \phi _0 ^* )}.
\end{equation}
We have used \cref{sec:spectral_Euler} to justify the convergence of eigenvalues and eigenfunctions in $L ^2$. Now using \cref{lem:L-Delta-omE} we compute 
\begin{align*}
    G := \lim _{\e \to 0} \frac1{\e ^2} \cLe \Delta \ome = -\lim _{\e \to 0} \frac1{\e ^2} F'' (\ye) \ue \cdot \nabla |\ue| ^2 = \lim _{\e \to 0} \frac{-F'' (\ye)}\e \frac{\ue \cdot \nabla |\ue| ^2}\e.
\end{align*}
With 
\begin{align*}
    \Fe &= F _0 + \e F _1 + \cO (\e ^2), &
    \ue &= u _0 + \e u _1 + \cO (\e ^2),
\end{align*}
we have $\lim_{\e \rightarrow 0} \dfrac{\Fe''(\ye)}{\e}=\lim_{\e\rightarrow 0}\dfrac{\Fe''(\ye)-F_0''(\ye)}{\e}=F_{1}''(\psi_0)$, and 
\begin{equation}
    \lim_{\e\rightarrow 0} \frac{\ue\cdot\ \nabla|\ue|^2}{\e}=\lim_{\e\rightarrow 0}\frac{\ue\cdot \nabla|\ue|^2-u_{0}\cdot \nabla|u_{0}|^2}{\e}=2 (u_0\cdot \nabla u_0\cdot u_1+u_0\cdot \nabla u_1\cdot u_0+u_1\cdot \nabla u_0 \cdot u_0).
\end{equation}
Therefore,
\begin{align}
    \label{eqn: non-vanishing}
    G = \lim _{\e \to 0} \frac1{\e ^2} \cLe \Delta \ome =  
    -2 F _1'' (\psi _0) \left(
        u _0 \cdot \nabla u _0 \cdot u _1 + u _0 \cdot \nabla u _1 \cdot u _0 + u _1 \cdot \nabla u _0 \cdot u _0
    \right),
\end{align}
where the limit holds in the sense of uniform convergence.
For a shear flow $u _0 = U (y) e _1$, so
\begin{align*}
    G = -2 F _1'' (\psi _0) \left[
        U ^2 \partial _x (u _1 \cdot e _1) + (u _1 \cdot e _2) U U'
    \right].
\end{align*}
Recall $u _1 = (-\partial _y \psi _1, \partial _x \psi _1)$, we can write it as 
\begin{align*}
    G = - 2 F _1'' (\psi _0) \left[
        -U ^2 \partial _x \partial _y \psi _1 + \partial _x \psi _1 U U'
    \right] = 2 F _1'' (\psi _0) U \partial _x \left[
        U \partial _y \psi _1 - \psi _1 U'
    \right].
\end{align*}
Let us now take inner product with $\phi _0 ^*$ and integrate by part:
\begin{align*}
    \left( G, \phi _0 ^* \right) &= \int 2 F _1'' (\psi _0) U \partial _x \left[
        U \partial _y \psi _1 - \psi _1 U'
    \right] \phi _0 ^* \d x \d y \\
    & = - \int 2 F _1'' (\psi _0) \left[
        U \partial _y \psi _1 - \psi _1 U'
    \right] U \partial _x \phi _0 ^* \d x \d y.
\end{align*}
Since $U \partial _x \phi _0 ^* = \frac1{F' _0 (\psi _0)} \cJ _0 \phi _0 ^* = -\frac1{F' _0 (\psi _0)} \lambda _0 \phi _0 ^-$, we conclude 
\begin{align*}
    \left( G, \phi _0 ^* \right) = 2 \lambda _0 \int \frac{F _1'' (\psi _0)}{F _0 ' (\psi _0)} \left[
        U \partial _y \psi _1 - \psi _1 \partial _y U
    \right] \cdot \phi _0 ^- \d x \d y.
\end{align*}
In our construction, $F _0' = -25$, $F _1'' = 4$ and $\psi _1 = \cos (3x + 4y) - \frac1{75} \cos 10 y$.
Note that $\phi _0 ^- = -\frac1{\lambda _0} \cJ _0 \phi _0 ^* = \partial _x (\frac{25U}{\lambda_0}\phi_0^{*} )$ has no zero mode in $x$, so 
\begin{align*}
    \left( G, \phi _0 ^* \right) = -\frac{8 \lambda _0}{25} \int \left[
        U \partial _y \cos (3x + 4y) -  \cos (3x + 4y) \partial _y U
    \right] \cdot \phi _0 ^- d x d y.
\end{align*}
Now we plugin $U = 5 \sin 5 y$ and we get 
\begin{align*}
    U \partial _y \cos (3x + 4y) - \cos (3x + 4y) \partial _y U &= -20 \sin 5 y \sin (3 x + 4 y) - 25 \cos 5 y \cos (3 x + 4 y) \\
    &= -\frac52 \cos (3 x + 9 y) - \frac{45}2 \cos (3x - y).
\end{align*}
From our computation, $\phi _0 ^-$ contains $\cos (3x - y)$ Fourier mode, but does not have $\cos (3x + 9y)$ Fourier mode, so 
\begin{align*}
    \left( G, \phi _0 ^* \right) = -\frac{8 \lambda _0}{25} \cdot \left(-\frac{45}2\right) ( \cos (3x - y), \phi _0 ^- ) = \frac{36 \lambda _0}5 ( \cos (3x - y), \phi _0 ^- ).
\end{align*}
thus 
\begin{align*}
    \lim _{\e \to 0} \frac1{\e ^2} \P _{\lambda _\e} (\Delta \ome) = \frac{36 ( \cos (3x - y), \phi _0 ^- )}{5 ( \phi _0, \cH \phi _0 ^- )} \phi _0.
\end{align*}
Therefore, we have 
\begin{align*}
    \P _{\lambda _\e} (\Delta \ome) = \frac{36 ( \cos (3x - y), \phi _0 ^- )}{5 ( \phi _0, \cH_0 \phi _0 ^- )} \e ^2 \phi _0 + o (\e  ^2) \ne 0.
\end{align*}
The coefficient of $\e ^2$ is non-zero because of \eqref{Kol_equality}. This completes the proof of the proposition.
\end{proof}

\section{Bounds on the Grenier iteration}\label{sec:Grenier}

We now verify the inductive Sobolev bound used in the proof of the main theorem.

\begin{proof}[Proof of Proposition \ref{pro:reg}]
We prove them inductively, by solving
\begin{align*}
    \omega _k (t) = \int _0 ^t \expo{(t - s) \cL} \Delta \omega _{k - 1} - \sum _{\substack{i, j \ge 1 \\i + j = k}} \expo{(t - s) \cL} (u _i \cdot \nabla \omega _j) \d s.
\end{align*}
Note that $\| \expo{t \cL} \| _{\cX ^k \to \cX ^k} \le C _k \expo{\lambda t}$ by Item \ref{item:Euler_spectral_3} of Proposition \ref{pro:Euler_spectral}.

At $k = 1$, we have
\begin{align*}
    \| \omega _1 (t) \| _{H ^{2 m}} \le \int _0 ^t \expo{\lambda (t - s)} \| \Delta \omE \| _{H ^{2 m}} \d s \le C _1 \| \omE \| _{H ^{2 m + 2}} (\expo{\lambda t} - 1) \le C _1 \expo{\lambda t}.
\end{align*}
For $k \ge 2$, assume the bound holds for every $j < k$, then 
\begin{align*}
    \| \omega _k (t) \| _{H ^{2 (m + 1 - k)}} &\le \int _0 ^t \expo{\lambda (t - s)} \left( \| \Delta \omega _{k - 1} \| _{H ^{2 (m + 1 - k)}} + \sum _{i + j = k} \| u _i \cdot \nabla \omega _j \| _{H ^{2 (m + 1 - k)}} \right) \d s \\
    & \le C _k \int _0 ^t \expo{\lambda (t - s)} \left( \| \omega _{k - 1} \| _{H ^{2 (m + 2 - k)}} + \sum _{i + j = k} \| u _i \| _{H ^{2 (m + 1 - k)}} \| \nabla \omega _j \| _{H ^{2 (m + 1 - k)}} \right) \d s \\
    & \le C _k \int _0 ^t \expo{\lambda (t - s)} \left( \| \omega _{k - 1} \| _{H ^{2 (m + 2 - k)}} + \sum _{i + j = k} \| \omega _i \| _{H ^{2 (m + 1 - i)}} \| \omega _j \| _{H ^{2 (m + 1 - j)}} \right) \d s.
\end{align*}
Here we used $H ^{2 (m + 1 - k)} \hookrightarrow H ^4 \hookrightarrow L ^\infty$, and $i, j \le k - 1$. By inductive assumptions, we have
\begin{align*}
    \| \omega _k (t) \| _{H ^{2 (m + 1 - k)}} 
    & \le C _k \int _0 ^t \expo{\lambda (t - s)} \left( 
        \expo{(k - 1) \lambda s} 
        + \expo{k \lambda s}
    \right) \d s \\
    & \le 2 C _k \expo{\lambda t} \int _0 ^t \expo{(k - 1) \lambda s}  \d s \\
    & = \frac{2 C _k}{(k - 1) \lambda} \expo{\lambda t} (\expo{(k - 1) \lambda t} - 1) \\ 
    & \le C _k \expo{k \lambda t}.
\end{align*}
This proves the first inequality.

Now testing \eqref{eqn:remainder} by $\tilde{\omega}$. There exists a positive constant $\tilde C$ such that we have 
\begin{align*}
    \frac{\d}{\d t} \|\tilde \omega\| _{L ^2} &\le 2 \sum _{k = 0} ^{m - 1} \nu ^k \| \nabla \omega _k \| _{L^{\infty}} \|\tilde \omega\| _{L ^2} + 2 \nu ^m \| \Delta \omega _{m - 1} \| _{L ^2} + C \sum _{k = m} ^{2 m} \nu ^k \expo{k \lambda t} \\
    &\le 2 \sum _{k = 0} ^{m - 1} \nu ^k \| \nabla \omega _k \| _{L^{\infty}} \|\tilde \omega\| _{L ^2} + \tilde C \sum _{k = m - 1} ^{2 m} \nu ^k \expo{k \lambda t}.
\end{align*}
For $t \le \frac1\lambda \log \left(\frac\delta\nu\right)$, $t$ and $\nu$ satisfies $\nu \expo{\lambda t} \le \delta$. During this time span, we have 
\begin{align*}
    \| \nabla \omega _k \| _{L ^\infty} \le \| \omega _k \| _{H ^3} \le C _k \expo{k \lambda t},
\end{align*}
thus 
\begin{align*}
    2 \sum _{k = 0} ^{m - 1} \nu ^k \| \nabla \omega _k \| _{L^{\infty}} \le 2 \| \nabla \omE \| _{L ^\infty} + \sum _{k = 1} ^{m - 1} C _k \delta ^k \le 4 \| \nabla \omE \| _{L ^\infty}.
\end{align*}
In the last step, we chose $\delta \ll 1$ small so that $\sum _{k = 1} ^{m - 1} C _k \delta ^k \le 2 \| \nabla \omE \| _{L ^\infty}$. By our selection of $m$ in \eqref{choice_m}, we have that for sufficiently small $\e > 0$, it holds that
\begin{align*}
    (m - 2) \lambda _0 \ge 1000 = 8 \| \nabla \omega _0 \|_{L ^{\infty}} \quad \leadsto \quad (m - 2) \lambda _\e \ge 4 \| \nabla \ome \|_{L ^{\infty}} \text{ as } \lambda _\e \to \lambda _0, \omega _\e \to \omega _0 \text{ in } H ^3.
\end{align*}
The energy inequality is now simplified to
\begin{align*}
    \frac{\d}{\d t} \|\tilde \omega\| _{L ^2} \le (m - 2) \lambda \| \tilde \omega \| _{L ^2} +  \tilde C \sum _{k = m - 1} ^{2 m} \nu ^k \expo{k \lambda t}.
\end{align*}
Solving this differential inequality gives
\begin{equation}
    \| \tilde{\omega} (t) \| _{L ^2} \le \tilde C \sum _{k = m - 1} ^{2 m} \frac{\nu ^k}{(k - m + 2) \lambda} \left( 
        \expo{k \lambda t} - \expo{(m - 2) \lambda t}
    \right) \le \tilde C \sum _{k = m - 1} ^{2 m} \nu ^k \expo{k \lambda t}.
\end{equation}
This, together with the $H ^{2 (m + 1 - k)}$ bound of $\omega _k$, completes the proof of the proposition.
\end{proof}

\begin{remark}
    This is a time span during which the energy has NOT been noticeably dissipated. Indeed, by the maximum principle $\| \omnu (t,\cdot )\| _{L ^{\infty}} \leq \| \omnu (0,\cdot )\| _{L ^{\infty}}= \cO (1)$ during this time span, so the total energy dissipation is 
    \begin{align*}
        \nu \int _0 ^t \int |\nabla \unu| ^2 \d x \d s = \nu \int _0 ^t \int |\omnu| ^2 \d x \d s = \cO (\nu \log (1/\nu)).
    \end{align*}
    Therefore $\| \unu \| _{L ^2}$ is still comparable with $\| \uE \| _{L ^2}$ with $o (1)$ energy dissipation.
\end{remark}

\section*{Acknowledgment}
S.~H.~is supported by NSF DMS-2406293.
Y.~H.~acknowledges partial funding from EPSRC Horizon Europe Guarantee EP/X020886/1.
Y.~H.~and J.~Y.~would like to acknowledge the support of the Institut Henri Poincar\'e (UAR 839 CNRS-Sorbonne Universit\'e), and LabEx CARMIN (ANR-10-LABX-59-01). 

\section*{Disclosure of AI usage}
We used ChatGPT 5.5 to understand the theoretical background of the classical spectral theory behind Proposition \ref{pro:Euler_spectral}. We also applied ChatGPT 6 to proofread the finished draft. 
\appendix
\crefalias{section}{appendix}
\crefalias{subsection}{appendix}

\section{Lower bound on the separation time}
\label{sec:lower-bound}

In this section, we estimate the speed of separation by providing the lower bound on the separation time for stationary Euler solution. It is based on a direct energy estimate.

\begin{proposition}
    Let $\uE \in C ^1$ be a non-constant stationary Euler solution. Define the average speed $U$ and maximal stretching rate $S$ by
    \begin{align*}
        U ^2 &= \frac1{|\T ^2|} \int _{\T ^2} |\uE| ^2 \d x \d y, & 
        S &= \max _{(x, y) \in \T ^2} \|D \uE (x, y)\|.
    \end{align*}
    Here $\| \cdot \|$ represents the matrix norm, and $D \uE = \frac12(\nabla \uE + (\nabla \uE) ^\top)$ is the symmetric part of velocity gradient. Then 
    \begin{align*}
        T _\nu ^\delta (\uE) \ge \frac1{2 S} \log \left(\frac {U ^2}{S} \cdot \frac{\delta ^2}\nu \right).
    \end{align*}
\end{proposition}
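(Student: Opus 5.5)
We run a direct $L^2$ energy estimate on the velocity difference $v := \unu - \uE$. Since $\uE$ is stationary, $\partial_t \uE + \uE\cdot\nabla\uE + \nabla\pE = 0$ holds with $\partial_t\uE = 0$. Subtracting this from \eqref{eq:ns} gives
\begin{align*}
    \partial_t v + \unu\cdot\nabla v + v\cdot\nabla\uE + \nabla(\pnu-\pE) = \nu\Delta v + \nu\Delta\uE, \qquad v(0)=0 .
\end{align*}
We test this equation with $v$. The transport term $\unu\cdot\nabla v$ and the pressure term both vanish by incompressibility. The stretching term satisfies $\int (v\cdot\nabla\uE)\cdot v = \int v\cdot D\uE\, v$, so its absolute value is at most $S\|v\|_{L^2}^2$. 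For the forcing we keep the dissipation and write
\begin{align*}
    \nu\int \Delta\uE\cdot v = -\nu\int \nabla\uE:\nabla v \le \nu\|\nabla v\|_{L^2}^2 + \tfrac{\nu}{4}\|\nabla\uE\|_{L^2}^2 .
\end{align*}
The gradient term is absorbed by the dissipation $-\nu\|\nabla v\|^2_{L^2}$. This yields
\begin{align*}
    \frac{\d}{\d t}\|v\|_{L^2}^2 \le 2S\|v\|_{L^2}^2 + \frac{\nu}{2}\|\nabla\uE\|_{L^2}^2 .
\end{align*}

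Next we bound $\|\nabla\uE\|_{L^2}$ by $S$ and $U$. On $\T^2$ with $\div\uE=0$, integration by parts gives $\int \partial_j u_i\,\partial_i u_j = 0$. Hence $\|\nabla\uE\|_{L^2}^2 = 2\|D\uE\|_{L^2}^2 \le 2 S^2 |\T^2|$. Gronwall with $v(0)=0$ then gives
\begin{align*}
    \|v(t)\|_{L^2}^2 \le \frac{\nu S^2|\T^2|}{2S}\left(\expo{2St}-1\right) \le \frac{\nu S}{2}\,|\T^2|\,\expo{2St}.
\end{align*}

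At the separation time, $\|v\|_{L^2}^2 = \delta^2\|\uE\|_{L^2}^2 = \delta^2 U^2|\T^2|$. Inserting this gives $\expo{2S T} \ge \frac{2U^2\delta^2}{S\nu}$, which implies the claimed bound $T_\nu^\delta \ge \frac1{2S}\log\bigl(\frac{U^2}{S}\frac{\delta^2}{\nu}\bigr)$, with a factor $2$ to spare. The constant is consistent with the statement: the conclusion has $U^2/S$, and $S>0$ because $\uE$ is non-constant.

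One caveat concerns regularity. The statement assumes only $\uE\in C^1$, so $\Delta\uE$ may not exist classically. The forcing term is nevertheless handled in weak form as $-\nu\int\nabla\uE:\nabla v$, which only requires $\nabla\uE\in L^2$. This form is legitimate because $\unu$ is smooth for $t>0$ and $v(0)=0$. I expect the only delicate step to be exactly this justification of the weak formulation with an initial datum of merely $C^1$ regularity, together with the identity $\|\nabla\uE\|^2 = 2\|D\uE\|^2$. For the identity, the natural route is to approximate $\uE$ by smooth divergence-free fields, or simply to note that the identity holds for $H^1$ divergence-free fields by density.
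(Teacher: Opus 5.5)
Your argument is the paper's energy estimate almost verbatim. Splitting $\nu\Delta\unu = \nu\Delta v + \nu\Delta\uE$ and applying Young's inequality is the velocity-form version of the paper's $\int v\cdot\Delta\unu = -\int w\,\omnu \le \frac14\|\omE\|_{L^2}^2$. Both routes then use $\|\nabla\uE\|_{L^2}^2 = \|\omE\|_{L^2}^2 = 2\|D\uE\|_{L^2}^2$ followed by Gr\"onwall.

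One correction to a constant. Take $\|\cdot\|$ to be the operator norm, as the paper's bound $\|\omE\|_{L^2}^2 \le 4S^2|\T^2|$ indicates. The trace-free symmetric matrix $D\uE$ then has Frobenius norm satisfying $|D\uE|_F^2 = 2\|D\uE\|^2$. So $\|\nabla\uE\|_{L^2}^2 \le 4S^2|\T^2|$, not $2S^2|\T^2|$. This gives exactly $\|v(t)\|_{L^2}^2 \le \nu S|\T^2|\expo{2St}$ and the stated bound, with no factor $2$ to spare.
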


\begin{proof}
Let us denote the difference
\begin{align*}
    v &= \unu - \uE, &
    q &= \pnu - \pE, &
    w &= \omnu - \omE.
\end{align*}
Subtracting \eqref{eq:euler} from \eqref{eq:ns}, one obtains
\begin{align*}
    \partial_t v + \unu \cdot \nabla v + v \cdot \nabla \uE + \nabla q = \nu \Delta \unu,
    \qquad
    \nabla \cdot v = 0.
\end{align*}
Now we multiply by $v$ and derive the energy estimate. For the transport and pressure term, since $\uE$ and $v$ are divergence free and $\nabla q$ is curl free, we have
\begin{align*}
    \int_{\T ^2} \unu \cdot \nabla v \cdot v \d x \d y=0, \qquad
    \int_{\T ^2} \nabla q \cdot v \d x \d y=0.
\end{align*}
For the stretching term, we have 
\begin{align*}
    \int_{\T ^2} v \cdot \nabla \uE \cdot v \d x \d y = \int_{\T ^2} (v\otimes v) : \nabla \uE \d x \d y = \int_{\T ^2} (v\otimes v) : D \uE \d x \d y.
\end{align*}
Here recall $D u ^E$ is the symmetric part of $\nabla u ^E$. For the dissipation term, $\Delta \unu = \Delta \nabla ^\perp \psi ^\nu = \nabla ^\perp \omnu$, where $\psi ^\nu$ is the stream function. After applying integration by parts, we have
\begin{align*}
    \int_{\T ^2} v \cdot \Delta \unu \d x \d y &= -\int_{\T ^2} w \omnu \d x \d y = -\int _{\T ^2} w ^2 \d x \d y - \int _{\T ^2} w \omE \d x \d y \le \frac14 \int _{\T ^2} |\omE| ^2 \d x \d y.
\end{align*}
In summary, we have the energy estimate
\begin{align*}
    \frac{\d}{\d t}\int_{\T ^2} \frac{|v|^2}2 \d x \d y + \int_{\T ^2} (v\otimes v) : D \uE \d x \d y \le \frac\nu4\int_{\T ^2} |\omE|^2\d x \d y.
\end{align*}
By our definition of the stretching rate $S$, we have 
\begin{align*}
    \frac{\d}{\d t}\int_{\T ^2}\frac{|v|^2}2\d x \d y\le\int_{\T ^2}S |v| ^2\d x \d y+\frac\nu4\int_{\T ^2} |\omE|^2\d x \d y.
\end{align*}
Indeed, $\int_{\T ^2} |\omE|^2\d x \d y$ is constant in time for 2D Euler, even for nonstationary solutions. By the equal partition of enstrophy between the symmetric and skew-symmetric parts, we can bound the enstrophy by
\begin{align*}
    \| \omE \| _{L ^2 (\T ^2)} ^2 = 2 \int _{\T ^2} D \uE : D \uE \d x \d y \le 4 S ^2 |\T ^2| .
\end{align*}
Therefore, by Gr\"onwall inequality, we conclude
\begin{align*}
    \|v(t)\|_{L^2(\T)}^2\le\expo{2 S t}\left(\|v(0)\|_{L^2(\T)}^2+\nu S |\T ^2| \right) = \nu S |\T ^2| \expo{2 S t}.
\end{align*}
If at $t = T _\nu ^\delta$ we achieve $\| v (t) \| _{L ^2} ^2 = \delta ^2 \| \uE \| _{L ^2} ^2 = \delta^2 U ^2 |\T ^2|$, then 
\begin{align*}
    T _\nu ^\delta \ge \frac1{2 S} \log \left(\frac {\delta ^2 U ^2}{S \nu}\right).
\end{align*}
Thus, we have completed the proof of this proposition.
\end{proof}

\begin{remark}
    In the case of nonstationary flow, the velocity gradient does not need to stay constant, and we should replace $\expo{2 S t}$ in the Gr\"onwall inequality by $\expo{2 S (t)}$ with the accumulative stretching:
    \begin{align*}
        S (t) = \int _0 ^t \| D \uE (s) \| _{L ^\infty (\T)} \d s.
    \end{align*}
    Since velocity gradient grows at most exponentially (see \cite{LeiShi2016}*{eq. (2.1)}), via similar argument we conclude 
    \begin{align*}
        \|v(t)\|_{L^2(\T)}^2 \le \nu \cdot C \expo{C \expo {C t}}.
    \end{align*}
    As a consequence, we have the following bound as $\nu \to 0$:
    \begin{align*}
        T _\nu ^\delta \ge \frac1C \log \left(\frac1C \log \left(\frac{\delta ^2}{C \nu}\right) \right).
    \end{align*}
    The same bound was established in \cite{Chemin1996} within Yudovich class. Whether this lower bound can be achieved is left for future investigation. Indeed, to achieve this rate, one would need a nonstationary Euler flow with exponentially growing velocity gradient, which is currently unknown.
\end{remark}

\section{Properties of the linearized Euler operator} 
\label{sec:spectral_Euler}  

In this section, we collect the proof of certain lemmas and cite useful results on the spectrum theory, and develop all the necessary spectral properties of the linearized Euler operator $\cL$. We will start by verifying the functional settings of defined linear operators in \cref{sec:functional}, and then  we list property of the linear opearator based on Hamiltonian structure in \cref{sec:Hamiltonian} for general stationary Euler flow $\uE$. For our perturbed velocity field $\ue$, we start with developing the Lyapunov exponent control in \cref{sec:lyapunov}, which is an essential ingredient to study the right-most unstable eigenvalue $\lambda_\e$. Then, we analyze the spectrum, resolvent, and semigroup associated with the transport operator $\cTe$ in \cref{sec:spec-transport} and linearized Euler operator $\cLe$ in \cref{sec:spec-euler}, which would provide the estimates stated in \cref{pro:Euler_spectral}. The ideas follow the results in \cites{ShvydkoyLatushkin2005,ShvydkoyFriedlander2008,lin2013unstable}. For completeness, we give a self-contained proof.

\subsection{Functional settings}
\label{sec:functional}

Let us first verify the properties of linearized Euler operator $\cL$ \eqref{eq:w_cpt} , $\cT$, $\cK$ in the decomposition  \eqref{eq:w_cpt}, and $\cJ$ , $\HX$ in the composition \eqref{eqn:defn-J-H}, in our Hilbert space $\cX$ \eqref{def:X}.

\paragraph{Properties of $\cT$.}

Recall that $\cT = -\uE \cdot \nabla$ given in \eqref{eq:w_cpt}, and $\uE$ is smooth and bounded. The following properties hold for $\cT$.
\begin{itemize}
    \item $\cT$ is densely defined. This is because for any $f \in H ^1 (\T ^2)$ we have $\cT f \in L ^2$, so 
    \begin{align*}
        H ^1 (\T ^2) \cap \cX \subset D (\cT) := \{f \in \cX \mid \cT f \in \cX \text{ in the sense of distributions}\}.
    \end{align*}
    By density of $H ^1$ in $L ^2$, we conclude $D (\cT)$ is also dense in $\cX$.

    \item $\cT$ is skew-adjoint. For any $f, g \in H ^1 (\T ^2)$ it holds 
    \begin{align*}
        (\cT f, g) = \int _{\T ^2} (-\uE \cdot \nabla f) \bar g \d x \d y = \int _{\T ^2} f \cdot (\uE \cdot \nabla \bar g) \d x \d y = -(f, \cT g).
    \end{align*}
    By density, the above identity holds for all $f, g \in D (\cT)$. Thus $\cT ^* = -\cT$.

    \item $\cT$ is a closed operator. Take a sequence $f _k \in D (\cT)$ and assume $f _k \to f \in L ^2$, $\cT f _k \to g \in L ^2$. Then for any test function $h \in C ^\infty (\T ^2)$, we have 
    \begin{align*}
        \int _{\T ^2} \cT f \cdot \bar h \d x \d y = (f, \cT ^* h) = \lim _{k \to \infty} (f _k, \cT ^* h) = \lim _{k \to \infty} (\cT f _k, h) = (g, h) \le \| g \| _{L ^2} \| h \| _{L ^2}.
    \end{align*}
    Hence $\cT f \in L ^2$, $f \in D (\cT)$ and $\cT f = g$.
\end{itemize}

\paragraph{Property of $\cK$.}

Recall that $\cK = -\nabla \omE \cdot \curl ^{-1}$ defined in \eqref{eq:w_cpt}, and $\omE$ is smooth and bounded. The following properties hold for $\cK$.

\begin{itemize}
    \item $\cK$ is compact. This is because 
    \begin{align*}
        \| \cK f \| _{H ^1} \le C \| \nabla \omE \| _{W ^{1, \infty}} \left \| \curl ^{-1} f \right \| _{H ^1} \le C \| \nabla \omE \| _{W ^{1, \infty}} \| f \| _{L ^2}.
    \end{align*}
    Thus $\cK$ is bounded from $L ^2$ to $H ^1$. Because $H ^1$ is compactly embedded in $L ^2$, we have $\cK$ is a compact operator.
\end{itemize}

\paragraph{Property of $\cJ$.} 

Recall that $\cJ = F' (\yE) \uE \cdot \nabla$ given in \eqref{eqn:defn-J-H}, where $\yE$ is smooth and bounded, and $F$ is smooth and bounded with nonvanishing derivative in the range of $\yE$. The following properties hold for $\cJ$.

\begin{itemize}
    \item $\cJ$ is densely defined. This is because for any $f \in H ^1 (\T ^2)$ we have $\cJ f \in L ^2$, so 
    \begin{align*}
        H ^1 (\T ^2) \cap \cX \subset D (\cJ) := \{f \in \cX \mid \cJ f \in \cX \text{ in the sense of distributions}\}.
    \end{align*}
    By density of $H ^1$ in $L ^2$, we conclude $D (\cJ)$ is also dense in $L ^2$.

    \item $\cJ$ is skew-adjoint. Note that 
    \begin{align*}
        \uE \cdot \nabla (F' (\yE)) = F'' (\yE) \uE \cdot \nabla \yE = F'' (\yE) \uE \cdot (-\uE) ^\perp = 0.
    \end{align*}
    Similar to $\cT$, for any $f, g \in H ^1 (\T ^2)$ it holds 
    \begin{align*}
        (\cJ f, g) = \int _{\T ^2} F' (\yE) (\uE \cdot \nabla f) \bar g \d x \d y = -\int _{\T ^2} F' (\yE) f \cdot (\uE \cdot \nabla \bar g) \d x \d y = -(f, \cJ g).
    \end{align*}
    By density, the above identity holds for all $f, g \in D (\cJ)$. Thus $\cJ ^* = -\cJ$.

\end{itemize}

\paragraph{Properties of $\HX$.}

Recall that $\cH = -\frac1{F' (\yE)} + \Delta ^{-1}$ given in \eqref{eqn:defn-J-H}, and $\HX = \P _{\ne 0} \cH$ is its mean-free projection. The following properties hold for $\HX$.

\begin{itemize}
    \item $\HX$ is bounded. Here $F'$ is assumed to be nonvanishing over the range of $\yE$, so $\frac1{F' (\yE)}$ is a bounded continuous function. Moreover, on mean zero subspace of $L ^2 (\T ^2)$, $\Delta ^{-1}$ is bounded. Therefore, $\cH: L ^2 \to L ^2$ is bounded, and thus $\HX: \cX \to \cX$ is bounded. 

    \item $\HX$ is self-adjoint. This is because both $\Delta ^{-1}$ and multiplication by a real smooth function is self-adjoint in $L ^2$, thus $\cH$ are self-adjoint in $L ^2$, and $\HX$ is self-adjoint in $\cX$.
\end{itemize}

\paragraph{Properties of $\cL$.}

Finally, we combine the above conclusions, and show properties of $\cL$.

\begin{itemize}
    \item $\cL$ is densely defined. This is because $\cL = \cT + \cK$ where $\cT$ is densely defined and $\cK$ is bounded. So $D (\cL) = D (\cT)$ is dense in $\cX$.

    \item $\cL$ is a closed operator. Take a sequence $f _k \in D (\cL) = D (\cT)$ and assume $f _k \to f \in L ^2$, $\cL f _k \to g \in L ^2$. Then $\cT f _k = \cL f _k - \cK f _k \to g - \cK f$ in $L ^2$, by the continuity of $\cK$. Since $\cT$ is closed, we have $f \in D (\cT) = D (\cL)$, and $\cT f = g - \cK f \in L ^2$, hence $\cL f = g \in L ^2$ as well. Therefore $\cL$ is also a closed operator.
\end{itemize}

\subsection{Hamiltonian structure}\label{sec:Hamiltonian}

We start with providing a proof for Lemma \ref{lem:riesz}, which gives the spectral projection of a Hamiltonian operator.

\begin{proof}[Proof of Lemma \ref{lem:riesz}]
    In the above, we have verified that $\cL = \cJ \circ \HX$ is a densely defined closed operator, where $\cJ$ is skew adjoint and $\HX$ is bounded and self adjoint. 

    Now suppose $\lambda$ is a nonzero simple eigenvalue of $\cL$. By \cite{kato1995perturbation}*{section III.5}, since $\cL$ is densely defined and closed, the rank of $\P _\lambda$ equals to the algebraic multiplicity of $\lambda$, which is 1. Since $\P _\lambda \phi = \phi$, we know the range of $\P _\lambda$ is spanned by $\phi$ and 
    \begin{align*}
        \P _\lambda f = L (f) \phi, \qquad \forall f \in \cX.
    \end{align*}
    Now we determine the linear functional $L (f)$. By \cite{kato1995perturbation}*{section III.6}, provided $\cL$ is a densely defined closed operator, the adjoint of the projection $\P _\lambda$ is given by 
    \begin{align*}
        \P ^* _\lambda = \frac1{2 \pi i} \int _{\bar \Gamma} (z - \cL ^*) ^{-1} \d z,    
    \end{align*}
    with $\bar \Gamma$ being the reflection of $\Gamma$ about the real axis, positively oriented. Since the spectra of $\cL ^*$ and $\cL$ are symmetric about the real axis, $\bar \Gamma$ surrounds $\bar \lambda$ and nothing else in the spectrum of $\cL ^*$. $\P _\lambda ^*$ has the same rank as $\P _\lambda$, so its range is $\operatorname{Span} \{\phi ^*\}$ for some $\phi ^* \ne 0$. This shows $\P _\lambda ^* \phi ^* = \phi ^*$, thus $\phi ^*$ is an eigenvector of $\mf L ^*$, and $\bar \lambda$ is simple. Given this, we have 
    \begin{align*}
        ( \P _\lambda f, \phi ^* ) = ( f, \P _\lambda ^* \phi ^* ) = ( f, \phi ^* ). 
    \end{align*}
    Therefore, $L (f) ( \phi, \phi ^* ) = ( f, \phi ^* )$. Note that $( \phi, \phi ^* ) \ne 0$, because otherwise $( f, \phi ^* ) = 0$ for all $f$, which cannot be true since $\phi ^* \ne 0$ is an eigenvector. Hence $L (f)$ is the quotient of two inner products, and \eqref{eqn:riesz-equiv} is proven.

    Now we use $\cL = \cJ \HX$, then $\cL ^* = - \HX \cJ$. For an eigenvector $\phi$ with eigenvalue $\lambda \ne 0$, we have
    \begin{align}
        \label{eqn:eigenvector}
        \cL \phi = \cJ \HX \phi = \lambda \phi.
    \end{align}
    Applying $\HX$ to both sides yields that
    \begin{align*}
        \HX \cJ \HX \phi = \lambda \HX \phi \quad \leadsto  \quad -\cL ^* (\HX \phi) = \lambda (\HX \phi). 
    \end{align*}
    Hence, an eigenvector $\phi$ of $\cL$ with eigenvalue $\lambda$ is linked to $\HX \phi \in \cX$, an eigenvector of $\cL ^*$ with eigenvalue $-\lambda$. By \eqref{eqn:eigenvector}, $\phi = \frac1\lambda \cJ (\HX \phi)$, so this correspondence is one-to-one. Similarly, if $\phi ^-$ is an eigenvector of $\cL$ with eigenvalue $-\bar \lambda$, then $\phi ^* := \HX \phi ^-$ is an eigenvector of $\cL ^*$ with eigenvalue $\bar \lambda$.
\end{proof}

We recall the index theorem for Hamiltonian system in \cite{LinZeng2022}. Note that we have switched the letter $L$ and $H$ from their paper. Moreover, $\langle\cdot, \cdot \rangle$ refers to duality pair, and $T ^*: Y ^* \to X ^*$ refers to the dual operator of $T: X \to Y$.
\begin{theorem}[\cite{LinZeng2022}, theorem 2.3]
    \label{lem: lin-zeng}
    Let $\mf X$ be a real Hilbert space. Let $\mf L = \mf J \mf H: \mf X \to \mf X$ where $\mf H$ and $\mf J$ satisfy the following hypotheses.
    \begin{enumerate}[\upshape (H1)]
        \item $\mf J: \mf X ^* \supset D (\mf J) \to \mf X$ is anti self dual: $\mf J ^* = - \mf J$.
        \item $\mf H: \mf X \to \mf X ^*$ is bounded and symmetric: $\mf H ^* = \mf H$, and $\langle \mf H f, g \rangle$ is a bounded symmetric bilinear form in $\mf X$. Moreover, 
        \begin{align*}
            \mf X &= \mf X _- \oplus \ker \mf H \oplus \mf X _+, & n ^- (\mf H) := \dim \mf X _- < \infty.
        \end{align*}
        such that $\mf H$ is strictly negative in $\mf X _-$ and coercive in $\mf X _+$. 
        
        \item Let $i ^* _{\mf X _+ \oplus \mf X _-}: \mf X ^* \to (\mf X _- \oplus \mf X _+) ^*$ be the dual operator of the embedding $i _{\mf X _+ \oplus \mf X _-}$. Then 
        \begin{align*}
            \ker i ^* _{\mf X _+ \oplus \mf X _-} = \left\{
                f \in \mf X ^*: \langle f, u \rangle = 0, \forall u \in \mf X _- \oplus \mf X _+ 
            \right\} \subset D (\mf J).
        \end{align*}
    \end{enumerate}
    Then the following conclusions follow. 
    \begin{enumerate}[\upshape(i)]
        \item If $\lambda \in \sigma (\mf L)$, then $-\lambda, \bar \lambda, -\bar \lambda \in \sigma (\mf L)$.
        \item If $\lambda$ is an eigenvalue of $\mf L$, then so are $-\lambda, \bar \lambda, -\bar \lambda$. The dimension of each generalized eigenspace is identical.
        \item The following \textbf{index formula} holds: 
        \begin{align*}
            k _r + 2 k _c + 2 k _i ^{\le 0} + k _0 ^{\le 0} = n ^- (\mf H).
        \end{align*}
        Here $k _r$ is the sum of algebraic multiplicities of positive eigenvalues of $\mf L$, $k _c$ is is the sum of algebraic multiplicities of eigenvalues of $\mf L$ in the first quadrant,  $k _i ^{\le 0}$ is the number of non-positive directions of the quadratic form associated with $\mf H$ in the generalized eigenspaces of a purely imaginary eigenvalue $\lambda = i \mu$ with $\mu > 0$, and $k _0 ^{\le 0}$ is the number of non-positive directions of the quadratic form associated with $\mf H$ in the generalized eigenspaces of $0$ modulo $\ker \mf H$.
    \end{enumerate}
\end{theorem}

\subsection{Lyapunov exponent}
\label{sec:lyapunov}

To determine the eigenvalue $\lambda_\e$ of $\cLe$, the Lyapunov exponent of the perturbed velocity fields $\ue$ is required. We recall its definition as follows
\begin{align*}
    \Lambda _\e :=&  \limsup _{|t| \to \infty} \frac{1}{|t|} \log \left\| \nabla \Phie (t, \cdot) \right\|_{L^{\infty}},\quad\text {with } \begin{cases}\dot \Phi _\e (t, x, y) = \ue (\Phie (t, x, y)),\\ \Phie (0, x, y) = (x, y).\end{cases}
\end{align*}
Here $\Phie = (\Xe, \Ye)$ is the $\ue$-flow map and $\dot{(\cdot)},\ \nabla$ indicate the time and spatial derivatives, respectively.

Since the flow $\ue$ is perturbed from a shear which naturally has Lyapunov exponent $0$, we expect $\Lambda_\e$ to be small. This is the content of the next lemma. 
\begin{lemma}\label{lem:Lyapunov}
    Consider the flow $\ue$ in Lemma \ref{lem:fixed-point} with $\e < 1$. There exists a {constant $C$ depending on $B _4$ given in Proposition \ref{pro:Euler_Steady}}, such that $\Lambda _\e \le C \sqrt \e$. {Moreover, the following estimate holds:
    \begin{equation} 
        \label{eqn: nabla-Phi-exp}
        |\nabla \Phie (t, x, y) | \le 2 (|t| + 2) \expo{C \sqrt{\e }|t|}, \qquad \forall t \in \R , (x, y) \in \T ^2.
    \end{equation}}
\end{lemma}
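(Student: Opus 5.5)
We will exploit the fact that $\ue$ is an $\cO(\e)$ perturbation of the shear flow $u_0=(5\sin 5y,0)^\top$, whose flow map grows at most linearly in time. The bound \eqref{eqn: nabla-Phi-exp} will come from a variation-of-constants argument for the matrix ODE
\begin{align*}
    \frac{\d}{\d t}\nabla\Phie = \nabla\ue(\Phie)\,\nabla\Phie ,
\end{align*}
where we split $\nabla\ue = \nabla u_0 + (\nabla\ue-\nabla u_0)$. By \cref{pro:Euler_Steady} and the Sobolev embedding $H^4\hookrightarrow W^{2,\infty}$ (we use $\|\ye-\psi_0\|_{H^4}\le B_4\e$), we have
\begin{align*}
    \|\nabla\ue-\nabla u_0\|_{L^\infty}\le CB_4\e .
\end{align*}
The unperturbed part $\nabla u_0$ is nilpotent, $\begin{pmatrix}0&25\cos5y\\0&0\end{pmatrix}$. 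If it were evaluated along a fixed $y$, the propagator would be $\begin{pmatrix}1&25 t\cos 5y\\0&1\end{pmatrix}$, with norm $\le 1+25|t|$.

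A naive Grönwall argument would only give growth like $\expo{C\e t}$ multiplied by a polynomial factor. The subtle point is that along the perturbed trajectory $y$ is not frozen, so the shear propagator along $\Phie$ is not exactly upper-triangular-with-constant-entry. To get a clean statement with $\sqrt\e$ we plan a time-splitting argument.

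\begin{enumerate}
\item On a time window of length $\tau$, compare $\nabla\Phie$ with the frozen shear propagator. Over that window the error matrix contributes a relative factor of at most $\expo{C\e\tau(1+\tau)}$, since $\e$-size errors are amplified by the linear growth of the shear part.
\item Concatenate windows via the cocycle property $\nabla\Phie(t+s)=\nabla\Phie(s,\Phie(t))\nabla\Phie(t)$, so that the total amplification over time $t$ is roughly
\begin{align*}
    \bigl[(1+25\tau)\expo{C\e\tau^2}\bigr]^{t/\tau}.
\end{align*}
\item Choose $\tau\sim\e^{-1/2}$. This balances the two effects: the exponent is then $\frac{t}{\tau}\bigl(\log(1+25\tau)+C\e\tau^2\bigr)\lesssim \sqrt\e\,t\log(1/\e)$.
\end{enumerate}

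To remove the logarithm and reach exactly $C\sqrt\e$, we would instead use a refined ansatz. Writing $M=\nabla\Phie$, we estimate a weighted norm $|M|_\tau$ adapted to the shear, namely conjugating by $\operatorname{diag}(\e^{-1/4},\e^{1/4})$. In these variables the shear entry $25\cos5y$ becomes $25\sqrt\e\cos 5y$ and the perturbation entries become of order at most $\e^{1/2}$. Hence the conjugated generator has $L^\infty$ norm at most $C\sqrt\e$, and Grönwall gives
\begin{align*}
    |D^{-1}MD|\le \expo{C\sqrt\e|t|}.
\end{align*}
Undoing the conjugation costs a factor $\e^{-1/2}$, which is too lossy for a uniform-in-$\e$ prefactor. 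We therefore use this weighted estimate only for $|t|\ge\e^{-1/2}$, where $\e^{-1/2}\le |t|$ is absorbed into the $(|t|+2)$ factor. For $|t|\le \e^{-1/2}$ we use the direct perturbative comparison with the shear propagator: by Duhamel,
\begin{align*}
    |M(t)|\le (1+25|t|)+\int_0^{t} (1+25|t-s|)\,C\e\,|M(s)|\,\d s .
\end{align*}
A bootstrap then gives $|M|\le 2(|t|+2)$ whenever $\e t^2\ll1$.

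Combining the two regimes yields \eqref{eqn: nabla-Phi-exp}. The Lyapunov bound $\Lambda_\e\le C\sqrt\e$ then follows immediately by taking logarithms, dividing by $|t|$, and letting $|t|\to\infty$. Negative times are handled identically, since the flow is reversible and $-\ue$ has the same structure.

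The main obstacle is matching the constants at the switch time $|t|\sim\e^{-1/2}$. The weighted estimate started at time $t_0\sim\e^{-1/2}$ inherits $|M(t_0)|\le C\e^{-1/2}$ from the short-time regime, and we must keep the conjugation loss from producing a factor worse than $(|t|+2)$. We would first try to carry the weighted norm from time $0$, using $|D^{-1}M(0)D|=1$ and $|M|\le \e^{-1/2}|D^{-1}MD|$. For $|t|\ge\e^{-1/2}$ this gives
\begin{align*}
    |M(t)|\le \e^{-1/2}\expo{C\sqrt\e|t|}\le |t|\,\expo{C\sqrt\e|t|},
\end{align*}
which is the desired form.
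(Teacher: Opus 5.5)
Your proposal is correct up to constants, and its core is the paper's idea. Conjugating by $\operatorname{diag}(\e^{-1/4},\e^{1/4})$ is equivalent to the paper's weighted quantity $|\nabla \Xe| + \e^{-1/2}|\nabla \Ye|$. Both turn the shear entry into an $\cO(\sqrt\e)$ term and give growth $\expo{C\sqrt\e|t|}$ by Gr\"onwall.

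Where you differ is in how you remove the $\e^{-1/2}$ loss. The paper observes that the weighted bound costs nothing for the second row: $|\nabla\Ye(t)|\le(1+\sqrt\e)\expo{C\sqrt\e t}$. It then integrates the first-row inequality $\frac{\d}{\d t}|\nabla\Xe|\le C\e|\nabla\Xe|+C|\nabla\Ye|$, which produces the linear prefactor for all $t$ in one pass. You instead split time into two regimes. For $|t|\lesssim\e^{-1/2}$ you use Duhamel around the shear propagator; for $|t|\gtrsim\e^{-1/2}$ you absorb $\e^{-1/2}$ into $|t|$. This also works, and the window argument in your steps 1--3 can simply be dropped.

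Two details in your version:
\begin{itemize}
\item Your worry that $y$ is not frozen is unfounded. Strictly upper-triangular $2\times2$ matrices have vanishing pairwise products, so the propagator of $\dot M=\nabla u_0(\Phie)M$ is exactly $\I+\int_s^t\nabla u_0(\Phie(r))\,\d r$. Its only off-diagonal entry is bounded by $25|t-s|$, which justifies your Duhamel inequality.
\item The bootstrap closes only for $|t|\le c\e^{-1/2}$, with $c$ small depending on $B_4$. So switch regimes there, and beyond it use $|M(t)|\le\e^{-1/2}\expo{C\sqrt\e|t|}\le c^{-1}|t|\expo{C\sqrt\e|t|}$.
\end{itemize}

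Your bootstrap yields $|M(t)|\le 2(1+25|t|)$, not $2(|t|+2)$ as you wrote. Your own leading Duhamel term $1+25|t|$ already rules out the latter. In fact no argument can give prefactor $2$: $\nabla\Phi_0(t,x,0)$ has the entry $25t$, so the stated bound fails at $t=1$ for all small $\e$. The paper's proof has the same slip, since it drops the constant in front of $|\nabla\Ye|$ when integrating. What either route actually proves is $|\nabla\Phie|\le C(|t|+2)\expo{C\sqrt\e|t|}$. This still gives $\Lambda_\e\le C\sqrt\e$, and it is all that \cref{lem:Hk-Lyapunov} uses, since there the prefactor is absorbed into $C_k$.
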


\begin{proof}
It suffices to prove \eqref{eqn: nabla-Phi-exp} in the case when $t>0$, in the case when $t<0$, we simply change $u$ to $-u$.
    By chain rule, 
    \begin{align*}
        \nabla \dot \Phi _\e &= \nabla \ue (\Phie) \nabla \Phie = \nabla u _0 (\Phie) \nabla \Phie + \nabla (\ue- u_{0}) (\Phie) \nabla \Phie.
    \end{align*}
    Recall $\nabla u _0 (x, y) = \begin{pmatrix}
        0 & U' (y) \\ 0 & 0
    \end{pmatrix}$, and $\| \ue - u _0 \| _{C ^1} \le C \| \ye - \psi _0 \| _{H ^4} \le C B _4 \e$ by Proposition \ref{pro:Euler_Steady}. Therefore 
    \begin{align*}
        \nabla \dot \Phi _\e = \begin{pmatrix}
            \cO (\e) & U' (\Ye) + \cO (\e) \\
            \cO (\e) & \cO (\e) 
        \end{pmatrix} \nabla \Phie.
    \end{align*}
    As $U' (y) = 25 \cos 5y$ is bounded, we obtain
    \begin{align}
        \label{eqn: diffeq-Xe}
        \frac{\d}{\d t} |\nabla \Xe| &\le C \e |\nabla \Xe| + C |\nabla \Ye|, \quad 
        \frac{\d}{\d t} |\nabla \Ye| \le C \e (|\nabla \Xe| + |\nabla \Ye|).
    \end{align}
    Hence, 
    \begin{align*}
        \frac{\d}{\d t} \left(|\nabla \Xe| + \e ^{-1/2} |\nabla \Ye|\right) \le C \sqrt \e \left(|\nabla \Xe| + \e ^{-1/2} |\nabla \Ye|\right).
    \end{align*}
    By Gr\"onwall's inequality, 
    \begin{align*}
        |\nabla \Xe (t)| + \e ^{-1/2} |\nabla \Ye (t) | \le (|\nabla \Xe(0)| + \e ^{-1/2} |\nabla \Ye (0)|) \expo{C \sqrt \e t}.
    \end{align*}    
    Since $\nabla \Phie (0) = \I$, we obtain 
    \begin{align*}
        |\nabla \Ye (t)| \le (1 + \e ^{1/2}) \expo{C \sqrt \e t} \le 2 \expo{C \sqrt \e t}.
    \end{align*}
    Now solving \eqref{eqn: diffeq-Xe} with this upper bound on $\nabla \Ye$, we have 
    \begin{align*}
        \expo{-C \e t} |\nabla \Xe| (t) - |\nabla \Xe| (0) \le \int _0 ^t 2 \expo{C (\sqrt \e - \e) s} \d s \le 2 t \expo{C (\sqrt \e - \e) t}.
    \end{align*}
    Thus, we have $|\nabla \Xe| (t) \le 2 t \expo{C \sqrt \e t} + \expo{C \e t} \le 2 (t + 1) \expo{C \sqrt \e t}$.
 
    Combined with the bound on $\nabla \Ye$, This proves \eqref{eqn: nabla-Phi-exp}. Accordingly, we have
    \begin{align*}
        \lim _{t \to \infty} \frac1t \log \left\| \nabla \Phie (t, \cdot) \right \|_{L^{\infty}} \le C \sqrt \e.
    \end{align*}
    By definition, we have $\Lambda _\e \le C \e ^{1/2}$.
\end{proof}

The Lyapunov exponents controls higher order derivatives of the flow map $\Phi_t$. This is the content of the following lemma.

\begin{lemma}\label{lem:Hk-Lyapunov}
    For every $k \ge 1$, we have 
    \begin{align}
        \label{eqn: Hk-Lyapunov}
         \limsup _{t \to \infty} \frac{1}{|t|} \log \|\nabla ^k \Phie (t)\|_{L^{\infty}} \le k \Lambda _\e.
    \end{align}
    Moreover, there exist universal constant $C$ and $k$-dependent $C _k$ so that
    \begin{align}
        \label{eqn: nabla-k-Phi-exp}
        |\nabla ^k \Phie (t) |\le C _k (|t| + 2) ^{3 k-2} \expo{C k \sqrt \e |t|},\quad\forall t\in \R.
    \end{align}
\end{lemma}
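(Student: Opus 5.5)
The plan is induction on $k$, differentiating the flow-map ODE $\dot\Phi_\e=\ue(\Phie)$ repeatedly in space and treating each equation for $\nabla^k\Phie$ as a linear ODE driven by lower-order terms. The base case $k=1$ is exactly \cref{lem:Lyapunov}: \eqref{eqn: nabla-Phi-exp} gives $|\nabla\Phie(t)|\le 2(|t|+2)\expo{C\sqrt\e|t|}$, which is \eqref{eqn: nabla-k-Phi-exp} with exponent $3\cdot1-2=1$. As in that lemma, it suffices to treat $t>0$; for $t<0$ we replace $\ue$ by $-\ue$.

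For $k\ge2$, the Fa\`a di Bruno formula gives
\begin{align*}
    \frac{\d}{\d t}\nabla^k\Phie = \nabla\ue(\Phie)\,\nabla^k\Phie + R_k,
\end{align*}
where $R_k$ is a sum of terms $\nabla^j\ue(\Phie)\,\nabla^{a_1}\Phie\otimes\cdots\otimes\nabla^{a_j}\Phie$ with $2\le j\le k$, $a_1+\dots+a_j=k$ and each $a_i\le k-1$. Since $\ue$ is smooth with $\|\ue\|_{C^k}$ bounded uniformly in $\e$ (\cref{pro:Euler_Steady}), the inductive hypothesis bounds each such term by
\begin{align*}
    C_k\prod_i (t+2)^{3a_i-2}\expo{Ca_i\sqrt\e t}\le C_k (t+2)^{3k-2j}\expo{Ck\sqrt\e t}\le C_k(t+2)^{3k-4}\expo{Ck\sqrt\e t}.
\end{align*}
The homogeneous part is the linearized flow, so Duhamel's formula gives $\nabla^k\Phie(t)=\int_0^t \nabla\Phie(t)\nabla\Phie(s)^{-1}R_k(s)\d s$, using $\nabla^k\Phie(0)=0$ for $k\ge2$. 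Because the flow is area preserving, $\nabla\Phie(s)^{-1}$ is the adjugate of $\nabla\Phie(s)$ and obeys the same bound. Moreover, $\nabla\Phie(t)\nabla\Phie(s)^{-1}=\nabla\Phie(t-s)\circ\Phie(s)$ by the group property, so this propagator is bounded by $2(t-s+2)\expo{C\sqrt\e(t-s)}$. Integrating then yields
\begin{align*}
    |\nabla^k\Phie(t)|\le C_k\int_0^t (t+2)\,(s+2)^{3k-4}\expo{Ck\sqrt\e t}\d s\le C_k(t+2)^{3k-2}\expo{Ck\sqrt\e t},
\end{align*}
which closes the induction for \eqref{eqn: nabla-k-Phi-exp}. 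The polynomial loss of $3$ per step leaves room to spare: using the adjugate bound on both factors instead of the group property would still fit within the budget.

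For \eqref{eqn: Hk-Lyapunov}, I run the same induction with $\Lambda_\e$ in place of $C\sqrt\e$. By definition of the limsup, for any $\eta>0$ we have $|\nabla\Phie(t)|\le C_\eta\expo{(\Lambda_\e+\eta)t}$, and the propagator satisfies the same bound in $t-s$ by the group property. Then $R_k$ is bounded by $C\expo{(k\Lambda_\e+k\eta)s}$, since the exponents $a_i$ sum to $k$. The Duhamel integral gives $C\expo{(\Lambda_\e+\eta)t}\int_0^t\expo{((k-1)\Lambda_\e+(k-1)\eta)s}\d s$, which is at most $C\,t\,\expo{k(\Lambda_\e+\eta)t}$. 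Taking $\frac1t\log$, letting $t\to\infty$, and then $\eta\to0$ gives the claim.

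The main obstacle is the bookkeeping of the combinatorial exponents: one has to check that every product term carries total exponential rate exactly $k$ times the base rate, which holds because $\sum a_i=k$. The only delicate point is using the group property, rather than a crude Gr\"onwall estimate on the full linear equation, so that the propagator costs only one factor of the base rate. A crude Gr\"onwall estimate would give $\expo{\|\nabla\ue\|_\infty t}$ and destroy the small exponent.
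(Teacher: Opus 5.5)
Your proof is correct and follows the paper's argument: Fa\`a di Bruno plus variation of parameters with the propagator $\nabla\Phie(t-s)\circ\Phie(s)$, with the difference that you also prove \eqref{eqn: Hk-Lyapunov} by rerunning the induction at rate $\Lambda_\e+\eta$, where the paper cites Shvydkoy--Latushkin. Drop the aside claiming that bounding $\nabla\Phie(t)$ and $\nabla\Phie(s)^{-1}$ separately would still fit the budget: the polynomial count is exactly tight (the $j=2$ terms of $R_k$ reach $(s+2)^{3k-4}$), so the extra factor $(s+2)\expo{C\sqrt\e s}$ would give $(t+2)^{4k-3}$ instead of $(t+2)^{3k-2}$, and in \eqref{eqn: Hk-Lyapunov} it would give rate $(3k-2)\Lambda_\e$ instead of $k\Lambda_\e$.
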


\begin{proof}
The proof of \eqref{eqn: Hk-Lyapunov} was provided in \cite{ShvydkoyLatushkin2005}*{lemma 10}. Following their idea, one can obtain the bound \eqref{eqn: nabla-k-Phi-exp}. For the sake of completeness, we provide the proof here. 
Again it suffices to prove \eqref{eqn: Hk-Lyapunov} and \eqref{eqn: nabla-k-Phi-exp} in the case when $t>0$.
    The case $k = 1$ was given in \eqref{eqn: nabla-Phi-exp}. By chain rule (Fa\`a di Bruno formula), for any multi-index $\alpha \in \N ^2$,
    \begin{align*}
        \partial ^\alpha \dot \Phi _\e = \partial ^\alpha (\ue (\Phie)) = \nabla \ue (\Phie) \partial ^\alpha \Phie + \cR _\alpha,
    \end{align*}
    where the remainder has the form
    \begin{align*}
        \cR _\alpha = \sum _{\substack{2 \le i \le |\alpha| \\ \alpha _1 + \dots + \alpha _i = \alpha \\ |\alpha _j| > 0}} C _{\alpha _1, \dots, \alpha _i} \nabla ^i \ue (\Phie) \partial ^{\alpha _1} \Phie \dots \partial ^{\alpha _i} \Phie.
    \end{align*}
    By variation of parameters, for $|\alpha| \ge 2$ we have 
    \begin{align*}
        \partial ^\alpha \Phie (t, x, y) = \int _0 ^t \nabla \Phie (t - s, \Phie (s, x, y)) \cR _\alpha (s, x, y) \d s.
    \end{align*}
    If \eqref{eqn: nabla-k-Phi-exp} is true, then for $|\alpha| = k + 1$, as $\sum_{j=1}^{i} (3|\alpha_{j}|-2)=3(k+1)-2i\leq 3k-1$, by induction, we have
    \begin{align*}
        |\cR _\alpha (s, x, y)| &\le C _\alpha \| \ue \| _{C ^{k + 1}} (s + 2) ^{3 k-1} \expo{C(k+1) \sqrt \e s} \\
        & \le C _\alpha \| \ue \| _{C ^{k + 1}} (t + 2) ^{3k-1} \expo{C(k+1) \sqrt \e s}, \qquad 0 \le s \le t.
    \end{align*}
    Together with \eqref{eqn: nabla-Phi-exp} which bounds
    \begin{align*}
        |\nabla \Phie (t - s, \Phie (s, x, y))| \le 2 (t - s + 2) \expo{C \sqrt \e (t - s)} \le 2 (t + 2) \expo{C \sqrt \e (t - s)},
    \end{align*}
    we have 
    \begin{align*}
        |\partial ^\alpha \Phie (t, x, y)| \le t \cdot 2 C _\alpha \| \ue \| _{C ^{k + 1}} (t + 2) ^{3 k} \expo{C (k + 1) \sqrt \e t} \le C _{k + 1} (t + 2) ^{3 k + 1} \expo{C (k + 1) \sqrt \e t},
    \end{align*}
    where we used $\| \ue \| _{C ^{k + 1}} \le C \| \ye \| _{H ^{k + 4}}$ which is uniformly bounded in $\e$ by Proposition \ref{pro:Euler_Steady}. 
    As this is true for any $|\alpha| = k + 1$, we have shown \eqref{eqn: nabla-k-Phi-exp} for $k + 1$, and by induction it must hold for any $k \ge 1$.
\end{proof}

\subsection{Spectrum of the transport operator}
\label{sec:spec-transport}

The Lyapunov exponent bound yields an exponential control on the semigroup generated by the transport operator $\cTe = -\ue \cdot \nabla$ in $\cX ^k$. This is the content of the next lemma. 

\begin{lemma}
    \label{lem: transport semigroup}
    There exists a universal constants $C > 0$, such that for every $\gamma > 0$, $k \in \N$, there exists a constant $C _{k, \gamma} > 0$, such that for $\e \in [0, (\frac\gamma{2 C k}) ^2]$, 
    \begin{align*}
        \| \expo{t \cTe} \| _{\cX ^k \to \cX ^k} \le C _{k, \gamma} \expo{\gamma t}, \qquad \forall t \ge 0.
    \end{align*}
\end{lemma}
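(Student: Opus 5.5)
The semigroup $\expo{t\cTe}$ is explicit: since $\ue$ is divergence free and smooth, the transport equation $\partial_t f + \ue\cdot\nabla f = 0$ is solved by composition with the inverse flow map, $\expo{t\cTe} f = f\circ\Phie(-t,\cdot)$. I will first check that this preserves $\cX$. The stream function $\ye$ has the symmetries of $\cY$, so $\ue$ is equivariant under the maps $(x,y)\mapsto(-x,-y)$, $(x,y)\mapsto(x+\frac{2\pi}3,y)$ and $(x,y)\mapsto(\frac{2\pi}{15}-x,\frac{2\pi}5-y)$. Each of these is an isometry with linear part $\pm\I$, and $\nabla^\perp$ commutes with both signs. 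The flow map therefore commutes with these symmetries. Composition with a measure-preserving map also preserves the mean, so $\expo{t\cTe}$ maps $\cX^k$ into itself.

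\textbf{The $L^2$ level ($k=0$).} Here the bound is immediate: $\Phie(-t)$ preserves Lebesgue measure, so $\|\expo{t\cTe}\|_{L^2\to L^2}=1$.

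\textbf{Derivatives up to order $k$.} I will estimate $\|\nabla^j (f\circ\Phie(-t))\|_{L^2}$ for $1\le j\le k$ with the Fa\`a di Bruno formula. Each term has the form
\[
(\nabla^i f)\circ\Phie(-t)\cdot \partial^{\alpha_1}\Phie(-t)\cdots\partial^{\alpha_i}\Phie(-t),\qquad \textstyle\sum_\ell|\alpha_\ell|=j .
\]
For the $f$ factor I use measure preservation, which gives $\|(\nabla^i f)\circ\Phie(-t)\|_{L^2}=\|\nabla^i f\|_{L^2}$. The flow-map factors go in $L^\infty$, controlled by Lemma~\ref{lem:Hk-Lyapunov} (valid for negative times as well):
\[
|\partial^{\alpha_\ell}\Phie(-t)|\le C_{|\alpha_\ell|}(t+2)^{3|\alpha_\ell|-2}\expo{C|\alpha_\ell|\sqrt\e\, t}.
\]
Multiplying these over $\ell$, each term is bounded by
\[
C_k (t+2)^{3k}\expo{Ck\sqrt\e\, t}\,\|f\|_{H^k}.
\]
Summing over $j\le k$ gives the same bound for $\|\expo{t\cTe} f\|_{H^k}$.

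\textbf{Choosing $\e$.} Let $C$ be the universal constant from Lemma~\ref{lem:Lyapunov}, which depends only on the uniform bound $B_4$. If $\e\le(\gamma/(2Ck))^2$, then $Ck\sqrt\e\le\gamma/2$. The remaining polynomial factor is absorbed by
\[
(t+2)^{3k}\le C_{k,\gamma}\,\expo{\gamma t/2}, \qquad t\ge 0,
\]
which yields the claimed estimate with a constant depending only on $k$ and $\gamma$. The $\e$-dependence of the constants must be checked: the $C_k$ of Lemma~\ref{lem:Hk-Lyapunov} depend on $\|\ue\|_{C^{k+1}}$, and these norms are uniformly bounded for $\e\le\e^*$ by Proposition~\ref{pro:Euler_Steady}.

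The main obstacle is bookkeeping rather than anything conceptual. One must confirm that the exponents in the Fa\`a di Bruno products add up to exactly $k$ times the rate, so that the exponential is $\expo{Ck\sqrt\e t}$ and not something larger. Another must verify that Lemma~\ref{lem:Hk-Lyapunov} really covers $t<0$, i.e. the inverse flow. Its proof handled $t<0$ by replacing $u$ with $-u$, and $-\ue$ satisfies the same hypotheses, so this works.
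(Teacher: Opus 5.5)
Your proposal is correct and takes essentially the same route as the paper. You write $\expo{t\cTe}g = g\circ\Phie(-t)$, expand derivatives with the Fa\`a di Bruno formula, bound the flow-map factors by \eqref{eqn: nabla-k-Phi-exp}, and absorb the polynomial factor into $\expo{\gamma t/2}$ once $Ck\sqrt\e\le\gamma/2$. You also make explicit several points the paper uses only implicitly: measure preservation, the equivariance of the flow under the symmetries of $\cX$, and the separate $k=0$ case.
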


\begin{proof}
   
    Since $\cTe = -\ue \cdot \nabla$, the semigroup is exactly composition by the flow map
    \begin{align*}
        \expo{t \cTe} g = g \circ \Phie(-t), \qquad \forall g \in \cX.
    \end{align*}
    By chain rule, for any $|\alpha| \ge 1$ we have
    \begin{align*}
        \partial ^\alpha (g \circ \Phie)(-t) = \sum _{\substack{1 \le i \le |\alpha| \\ \alpha _1 + \dots + \alpha _i = \alpha \\ |\alpha _j| > 0}} C _{\alpha _1, \dots, \alpha _i} \nabla ^i g (\Phie)(-t) \partial ^{\alpha _1} \Phie(-t) \dots \partial ^{\alpha _i} \Phie(-t).
    \end{align*}
    Hence for $1 \le |\alpha| \le k$, we have the following by applying \eqref{eqn: nabla-k-Phi-exp}:
    \begin{align*}
        \| \partial ^\alpha (g \circ \Phie)(-t) \| _{L ^2} \le C _k \| g \| _{H ^k _0} (t + 2) ^{3 k-2} \expo{C k \sqrt \e t}.
    \end{align*}
    Note that $H ^k _0$ norm controls $\nabla ^i g$ with $1 \le i \le k$.
    Summing over $\alpha$ we get 
    \begin{align*}
        \| g \circ \Phie (-t)\| _{H ^k _0} \le C _k \| g \| _{H ^k _0} (t + 2) ^{3 k-2} \expo{C k \sqrt \e t}.
    \end{align*}
    With $\sqrt\e \le \frac\gamma{2 C k}$, we have 
    \begin{align}
        \| g \circ \Phie (-t)\| _{H ^k _0} \le C _k \| g \| _{H ^k _0} (t + 2) ^{3 k-2} \expo{\gamma t / 2}  \le C _{k, \gamma} \expo{\gamma t} \| g \| _{H ^k _0}.
    \end{align}
    Since every $\ue$ enjoys the symmetry enforced by $\cX$, the proof is completed.
\end{proof}

An immediate corollary is the resolvent estimate for the transport operator. Recall that for $z \notin \sigma (\cA)$ the resolvent of an operator $\cA$ is defined as 
\begin{align*}
    \cR _z (\cA) = (z - \cA) ^{-1}.
\end{align*}

\begin{corollary}
    \label{cor: resolvent}
    Under assumption of Lemma \ref{lem: transport semigroup}, the spectrum of $\cTe$ is contained in 
    \begin{align}
        \sigma (\cTe) \subset \{z \in \C: \lvert\Re z\rvert \le \gamma \}.    
    \end{align}
    In particular, for $z \in \C$ with $\lvert\Re z\rvert > \gamma$, we have 
    \begin{align*}
        \| \cR _z (\cTe) \| _{\cX ^k \to \cX ^k} \le \frac{C _{k, \gamma}}{\lvert\Re z\rvert - \gamma}.
    \end{align*}    
\end{corollary}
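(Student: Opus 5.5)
The plan is to obtain both claims from the semigroup bound of Lemma \ref{lem: transport semigroup} via the Laplace transform representation of the resolvent. First I fix $\gamma>0$ and $\e \in [0,(\frac{\gamma}{2Ck})^2]$. Lemma \ref{lem: transport semigroup} gives $\|\expo{t\cTe}\|_{\cX^k\to\cX^k}\le C_{k,\gamma}\expo{\gamma t}$ for $t\ge0$. Since $\expo{t\cTe}g=g\circ\Phie(-t)$ is a group, I also need the backward direction. The Lyapunov bounds of Lemma \ref{lem:Hk-Lyapunov} hold for all $t\in\R$, and the proof of Lemma \ref{lem: transport semigroup} only used $|\nabla^k\Phie(\pm t)|$. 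The same argument therefore gives $\|\expo{-t\cTe}\|_{\cX^k\to\cX^k}\le C_{k,\gamma}\expo{\gamma t}$ for $t\ge0$; equivalently, it is the same statement applied to $-\cTe=\ue\cdot\nabla$, which corresponds to replacing $\ue$ by $-\ue$.

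For $\Re z>\gamma$ I define
\begin{align*}
R(z)g:=\int_0^\infty \expo{-zt}\expo{t\cTe}g\,\d t .
\end{align*}
This integral converges absolutely in $\cX^k$, with
\begin{align*}
\|R(z)\|_{\cX^k\to\cX^k}\le C_{k,\gamma}\int_0^\infty \expo{-(\Re z-\gamma)t}\,\d t=\frac{C_{k,\gamma}}{\Re z-\gamma}.
\end{align*}
The standard Hille--Yosida computation then shows that $R(z)$ maps $\cX^k$ into $D^k(\cTe)$ and satisfies $(z-\cTe)R(z)=\I$ on $\cX^k$ and $R(z)(z-\cTe)=\I$ on $D^k(\cTe)$. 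One differentiates $h\mapsto \frac1h(\expo{h\cTe}-\I)R(z)g$ and uses that $\cTe$ is the generator of the strongly continuous group $g\mapsto g\circ\Phie(-t)$ on $\cX^k$. Hence $z\in\rho(\cTe)$ and $\cR_z(\cTe)=R(z)$.

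For $\Re z<-\gamma$ I apply the same construction to the backward group, setting
\begin{align*}
\cR_z(\cTe)=-\int_0^\infty \expo{zt}\expo{-t\cTe}\,\d t,
\end{align*}
which obeys the bound $C_{k,\gamma}/(|\Re z|-\gamma)$. Together the two cases give $\sigma(\cTe)\subset\{|\Re z|\le\gamma\}$ and the stated resolvent estimate.

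The one step needing care is confirming that $\cTe$ on $D^k(\cTe)$, as defined via distributions, is exactly the generator of the composition group on $\cX^k$, and that the symmetry class is preserved. For the latter, $\ue$ inherits the symmetries of $\cX$, so the flow map commutes with the symmetry maps and preserves $\cX^k$. For the former, strong continuity in $\cX^k$ follows by density of smooth functions together with the uniform bound on compact time intervals. The generator contains the smooth functions, and since it is closed and agrees with the distributional $-\ue\cdot\nabla$ on a core, it coincides with $\cTe$ on $D^k(\cTe)$. I expect this identification to be the main, though routine, obstacle; everything else is the standard Laplace-transform argument.
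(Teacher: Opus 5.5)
Your proposal is correct and matches the paper's proof. The paper also writes $\cR_z(\cTe)$ as the Laplace transform $\int_0^\infty \expo{-tz}\expo{t\cTe}\,\d t$ for $\Re z>\gamma$, bounds it by $C_{k,\gamma}/(\Re z-\gamma)$ using Lemma~\ref{lem: transport semigroup}, and handles $\Re z<-\gamma$ by reversing the direction of $\ue$; that is exactly your backward-group formula, and your identification of $\cTe$ on $D^k(\cTe)$ with the group generator fills in the paper's ``one can verify'' step. One caveat there: agreement on a core only shows the generator is contained in $\cTe$, so for equality you also need injectivity of $z-\cTe$ on $D^k(\cTe)$ for $\Re z\ne0$ (e.g.\ from $\Re(\ue\cdot\nabla f,f)=0$ when $k\ge1$, or Friedrichs' mollifier lemma in general).
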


\begin{proof}
    By Lemma \ref{lem: transport semigroup}, the following integral is convergent for every $z$ with $\Re z > \gamma$:
    \begin{align*}
        \tilde {\cR} f := \int _0 ^\infty \expo{-t z} \expo{t \cTe} f \d t.
    \end{align*}
    One can verify $(z - \cTe) \tilde {\cR} f = \tilde {\cR} (z - \cTe) f = f$. So $\cR _z (\cTe) = \tilde {\cR}$, thus 
    \begin{align*}
        \| \cR _z (\cTe) f \| _{\cX ^k} \le \int _0 ^\infty \expo{-t \Re z} C _{k, \gamma} \expo{\gamma t} \| f \| _{\cX ^k} \d t = \frac{C _{k, \gamma}}{\Re z - \gamma} \| f \| _{\cX ^k}.
    \end{align*}
    This completes the proof for the resolvent bound when $\Re z > \gamma$. The case $\Re z < -\gamma$ is proven by flipping the direction of $\ue$.
\end{proof}

\subsection{Spectrum of the linearized Euler operator}
\label{sec:spec-euler}

In the previous section we have obtained that the spectrum $\sigma (\cTe)$ is inside a strip $\{\lvert\Re z\rvert \le \gamma\}$. We start by showing eigenvalues of $\cLe$ outside this strip is uniformly bounded.

From now on, let us fix $\gamma = \lambda _0 / 8$, and for $k\in \N$, we define 
\begin{align}
    \label{eqn: bar ek}
    \bar \e _k = \min \left\{\e^*, 1, \frac{\gamma^2}{16C^2(k+1)^2} \right\}.    
\end{align}

\begin{lemma}[Boundedness of unstable eigenvalue]
    \label{lem: eigen-bounded}
    Let $k \ge 0$. There exists $R > \lambda _0$, such that for all $\e \in [0, \bar \e _k]$, we have
    \begin{align*}
        \sigma (\cLe) \cap \{ z \in \C: \Re z > \gamma\} \subset B _R (0).
    \end{align*}
    Here $B _R (0) = \{ z \in \C: |z| < R\}$. Moreover, within 
    \begin{align}
        \label{eqn: K}
        K := \{z \in \C: \Re z \ge 2 \gamma, |z| \le R \}, 
    \end{align} 
    $\sigma (\cLe)$ contains only discrete eigenvalues with finite multiplicity. 
\end{lemma}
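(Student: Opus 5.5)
The plan is to treat $\cLe = \cTe + \cKe$ as a compact perturbation of the transport operator, and combine the resolvent bound of \cref{cor: resolvent} with a Birman--Schwinger type factorization. The choice $\e \le \bar\e_k$ makes $\sqrt{\e} \le \gamma/(4C(k+1))$, so \cref{lem: transport semigroup} and \cref{cor: resolvent} apply (with $\gamma$, and even with $\gamma/2$) in $\cX^k$. The constant $C_{k,\gamma}$ is uniform in $\e\in[0,\bar\e_k]$, since the flow-map bounds of \cref{lem:Hk-Lyapunov} depend only on uniform Sobolev bounds of $\ye$ from \cref{pro:Euler_Steady}.

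For $\Re z > \gamma$ I will write
\begin{align*}
z - \cLe = (z - \cTe)\bigl(\I - \cR_z(\cTe)\cKe\bigr).
\end{align*}
Hence $z\in\sigma(\cLe)$ only if $1\in\sigma(\cR_z(\cTe)\cKe)$. The key quantitative step is to show $\|\cR_z(\cTe)\cKe\|_{\cX^k\to\cX^k} < 1$ for $|z|$ large in the half-plane $\Re z>\gamma$. The bound $C_{k,\gamma}/(\Re z-\gamma)$ alone only handles $\Re z$ large, not $|\Im z|$ large with bounded real part, and this is the main obstacle. I would exploit the smoothing of $\cKe$: it maps $\cX^k$ into $\cX^{k+1}$ with uniform bounds, since $\cKe f = -\nabla\ome\cdot\curl^{-1}f$ gains a derivative. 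Then I use the identity
\begin{align*}
\cR_z(\cTe) = z^{-1}\bigl(\I + \cTe\cR_z(\cTe)\bigr) = z^{-1}\bigl(\I + \cR_z(\cTe)\cTe\bigr).
\end{align*}
Applied to $g = \cKe f \in \cX^{k+1}$, this gives
\begin{align*}
\|\cR_z(\cTe)\cKe f\|_{\cX^k} \le |z|^{-1}\Bigl(\|\cKe f\|_{\cX^k} + \tfrac{C_{k,\gamma}}{\Re z-\gamma}\,\|\ue\cdot\nabla\cKe f\|_{\cX^k}\Bigr).
\end{align*}
The right side is at most $C|z|^{-1}(1+(\Re z-\gamma)^{-1})\|f\|_{\cX^k}$. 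To avoid the degeneration as $\Re z\downarrow\gamma$, I will apply the corollary with $\gamma/2$ instead of $\gamma$; this is allowed by the definition of $\bar\e_k$. Then $(\Re z-\gamma/2)^{-1}\le 2/\gamma$ uniformly on $\Re z>\gamma$, and the norm is $\le C'_k/|z|$. Choosing $R>\max(\lambda_0, 2C'_k)$ makes $\I-\cR_z(\cTe)\cKe$ invertible by a Neumann series for $|z|\ge R$. Hence such $z$ lie in the resolvent set, which gives the first claim.

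For the discreteness inside $K$, note that on $\Re z\ge 2\gamma$ the map $z\mapsto \cR_z(\cTe)\cKe$ is an analytic family of compact operators. Compactness follows because $\cKe:\cX^k\to\cX^{k+1}$ is bounded and $\cX^{k+1}\hookrightarrow\cX^k$ is compact. This family is invertible for $|z|\ge R$, so it is not identically non-invertible on the connected region $\Re z>\gamma$. The analytic Fredholm theorem then shows that $\I-\cR_z(\cTe)\cKe$ fails to be invertible only on a discrete set, at which $z$ is an eigenvalue of $\cLe$. Equivalently, $\sigma(\cLe)\cap\{\Re z>\gamma\}$ lies outside the essential spectrum, by Weyl's theorem for the compact perturbation. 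Such eigenvalues have finite algebraic multiplicity, with finite-rank Riesz projections. Since $K$ is compact, it contains only finitely many of them. I would also check that this eigenvalue set is independent of $k$, by elliptic-type regularity of eigenfunctions: if $\cLe\phi=z\phi$ with $\Re z>\gamma$, then $\phi = \cR_z(\cTe)\cKe\phi$ gains regularity.
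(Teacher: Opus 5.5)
Your proposal is correct and follows essentially the same route as the paper. Both arguments use the factorization $z-\cLe=(z-\cTe)(\I-\cR_z(\cTe)\cKe)$, the identity $z\cR_z(\cTe)=\I+\cR_z(\cTe)\cTe$ together with the smoothing $\cKe:\cX^k\to\cX^{k+1}$ to get $\|\cR_z(\cTe)\cKe\|\le C/|z|$, and compactness of $\cKe$ for discreteness of the spectrum in $K$. Your two refinements make explicit points the paper's proof passes over quickly: applying \cref{cor: resolvent} with $\gamma/2$ (allowed by the definition of $\bar\e_k$) keeps the resolvent bound from degenerating as $\Re z\downarrow\gamma$, and anchoring the analytic Fredholm argument on the invertibility already shown for $|z|\ge R$ justifies the compact-perturbation step.
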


\begin{proof}
    Since $\cLe = \cTe + \cKe$ and $\cKe$ is compact,  $\sigma (\cLe) \setminus \sigma (\cTe)$  consists of only isolated eigenvalues with finite algebraic multiplicity. In particular, $\sigma (\cLe) \cap \{ z \in \C: \Re z > \gamma\}$ contain only eigenvalues with finite multiplicity.

    To show these eigenvalues are bounded, observe that for each $z \notin \sigma (\cTe)$, we have
    \begin{align}
        \label{eqn: magical-identity}
        z - \cLe = z - \cTe - \cKe = (z - \cTe) \left(
            \I - \cR _z (\cTe) \cKe
        \right).
    \end{align}
    Since $z - \cTe$ is invertible, we need to show $\cR _z (\cTe) \cKe$ is small when $z$ is large. This is can be seen by 
    \begin{align*}
        \cR _z (\cTe) (z - \cTe) = \I \quad \leadsto \quad z \cR _z (\cTe) = \cR _z (\cTe) \cTe + \I.
    \end{align*}
    Thus 
    \begin{align*}
        \| \cR _z (\cTe) \cKe \| _{\cX ^k \to \cX ^k} \le \frac1{|z|} \left( 
            \| \cR _z (\cTe)  \circ \cTe \circ \cKe \| _{\cX ^k \to \cX ^k} + \| \cKe \| _{\cX ^k \to \cX ^k}
        \right).
    \end{align*}
    Since $\cR _z (\cTe)$ is bounded by Corollary \ref{cor: resolvent}, $\cKe$ is uniformly bounded from $\cX ^k$ to $\cX ^{k + 1}$, $\cTe$ is uniformly bounded from $\cX ^{k + 1}$ to $\cX ^k$, we conclude that the right hand side is bounded, so 
    \begin{align}
        \label{eqn:1/z-bound}
        \| \cR _z (\cTe) \cKe \| _{\cX ^k \to \cX ^k} \le C |z| ^{-1} < \frac12, \qquad |z| > R
    \end{align}
    for some $R > \lambda _0$ depending on the bounds $B _k$'s, given in Proposition \ref{pro:Euler_Steady}. 
\end{proof}

We will show that $\sigma (\cLe)$ in $K$ contains only a single eigenvalue. To achieve this, let us first confirm $\cLe$ is invertible on $\partial K$, so a spectrum projection can be defined.

\begin{lemma}[Resolvent on $\partial K$]
    \label{lem:resolvent-L}
    Let $\Gamma = \partial K$ be the counterclockwise oriented boundary, for $K$ given in \eqref{eqn: K}. For $k \ge 0$, there exists a small positive $\bar \e _k' \le \bar \e _k$, such that for all $\e \in [0, \bar \e _k']$, it holds 
    \begin{align}
        \label{eqn: no-eigen-on-Gamma}
        \sigma (\cLe) \cap \Gamma = \varnothing.
    \end{align}
    Moreover, $\cR _z (\cLe)$ is bounded in $\cX ^k$ operator norm, uniformly for $z \in \Gamma$ and $\e \in [0, \bar \e _k']$.
\end{lemma}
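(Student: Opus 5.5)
We would prove the lemma by perturbation from $\e = 0$, using the factorization \eqref{eqn: magical-identity}, $z - \cLe = (z - \cTe)(\I - \cR _z (\cTe) \cKe)$, which is valid for every $z \in \Gamma$. Since $\Re z \ge 2\gamma > \gamma$ on $\Gamma$, Corollary \ref{cor: resolvent} gives $\| \cR _z (\cTe) \| _{\cX ^k \to \cX ^k} \le C _{k,\gamma}/\gamma$, uniformly in $z \in \Gamma$ and $\e \in [0, \bar \e _k]$. It therefore suffices to show that $\I - \cR _z (\cTe) \cKe$ is invertible on $\cX ^k$, with an inverse bounded uniformly in $z \in \Gamma$ and small $\e$.

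\textbf{Step 1: the case $\e = 0$.} By Proposition \ref{pro:Kolmogorov}, $\sigma(\cL_0) \setminus i\R = \{\pm\lambda_0\}$. On the arc $|z| = R$ we have $R > \lambda_0$, and on the segment $\Re z = 2\gamma = \lambda_0/4$ we have $z \neq \lambda_0$. Hence $\Gamma \cap \sigma(\cL_0) = \varnothing$. This spectral statement is in $L^2 \cap \cX$, so we transfer it to $\cX^k$ as follows. For $z \in \Gamma$, the operator $\I - \cR_z(\cT_0)\cK_0$ is Fredholm of index zero on $\cX^k$, because $\cK_0 : \cX^k \to \cX^{k+1}$ is compact into $\cX^k$. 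Any kernel element $f \in \cX^k \subset \cX$ would be an $L^2$ eigenfunction of $\cL_0$ with eigenvalue $z$, which is impossible. So the operator is invertible on $\cX^k$. Since $z \mapsto \cR_z(\cT_0)\cK_0$ is norm-continuous on the compact set $\Gamma$, $\sup_{z\in\Gamma} \|(\I - \cR_z(\cT_0)\cK_0)^{-1}\|_{\cX^k\to\cX^k} =: M_0 < \infty$.

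\textbf{Step 2: perturbation in $\e$.} We want $\sup_{z\in\Gamma}\|\cR_z(\cTe)\cKe - \cR_z(\cT_0)\cK_0\|_{\cX^k \to \cX^k} \to 0$ as $\e \to 0$; then a Neumann series gives invertibility with bound $2M_0$ once this difference is below $1/(2M_0)$, which defines $\bar\e_k'$. Write the difference as $\cR_z(\cTe)(\cKe - \cK_0) + (\cR_z(\cTe) - \cR_z(\cT_0))\cK_0$. The first term is $O(\e)$, by Corollary \ref{cor: resolvent} together with $\|\nabla(\ome - \omega_0)\|_{W^{k+1,\infty}} \lesssim \e$ from Proposition \ref{pro:Euler_Steady}. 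For the second term we use the resolvent identity $\cR_z(\cTe) - \cR_z(\cT_0) = \cR_z(\cTe)(\cTe - \cT_0)\cR_z(\cT_0)$. The operator $\cTe - \cT_0 = -(\ue - u_0)\cdot\nabla$ loses one derivative, but $\cK_0$ gains one, and we compensate using that $\cR_z(\cT_0)$ is bounded $\cX^{k+1} \to \cX^{k+1}$, by Corollary \ref{cor: resolvent} at level $k+1$. This is why $\bar\e_k$ was defined with $(k+1)$. The result is $\|(\cR_z(\cTe) - \cR_z(\cT_0))\cK_0\|_{\cX^k\to\cX^k} \le C\,\|\ue - u_0\|_{W^{k,\infty}}\,C_{k+1,\gamma}^2\gamma^{-2}\|\cK_0\|_{\cX^k\to\cX^{k+1}} \lesssim \e$, uniformly in $z \in \Gamma$.

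\textbf{Conclusion.} For $\e \le \bar\e_k'$, $z - \cLe$ is invertible on $\cX^k$ for every $z \in \Gamma$, which gives \eqref{eqn: no-eigen-on-Gamma}. The resolvent bound is $\|\cR_z(\cLe)\| \le 2M_0 C_{k,\gamma}/\gamma$. The main obstacle is Step 2: the derivative loss in $\cTe - \cT_0$ must be absorbed by $\cK_0$'s smoothing at one higher Sobolev level. A secondary point is Step 1's upgrade of the $L^2$ spectral information of Proposition \ref{pro:Kolmogorov} to $\cX^k$, which we handle through the Fredholm argument.
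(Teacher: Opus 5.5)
Your proof is correct, but it is organized differently from the paper's.

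\textbf{The paper's route.} The paper argues by contradiction, twice. It takes $\e_j \to 0$ together with either eigenpairs $(\lambda_j,\phi_j)$, $\lambda_j\in\Gamma$, or approximate eigenvectors $f_j$. From $\phi_j = \cR_{\lambda_j}(\cT_{\e_j})\cK_{\e_j}\phi_j$ it gets a uniform $\cX^{k+1}$ bound. This step needs $\cR_z(\cTe)$ bounded on $\cX^{k+1}$ for all small $\e$, which is where the $(k+1)$ in $\bar\e_k$ is used. It then extracts a nonzero limit via $H^{k+1}\hookrightarrow H^k$ and passes to the limit using the first and second resolvent identities. This produces an eigenvalue of $\cL_0$ on $\Gamma$, a contradiction. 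The uniform resolvent bound is obtained by a second, analogous contradiction argument.

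\textbf{Your route.} You separate the roles of $z$ and $\e$. At $\e=0$ you prove invertibility of $\I-\cR_z(\cT_0)\cK_0$ on $\cX^k$ by the Fredholm alternative. You then get a uniform bound $M_0$ from norm-continuity in $z$ on the compact curve $\Gamma$. Finally you perturb by a Neumann series, using the same resolvent identity and the same device of absorbing the derivative loss of $\cTe-\cT_0$ into the smoothing of $\cK_0$. Compactness enters your argument only through the Fredholm property at $\e=0$.

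\textbf{Comparison.} Your approach yields both assertions from one argument. It also gives an explicit bound $\|\cR_z(\cLe)\|_{\cX^k\to\cX^k}\le 2M_0C_{k,\gamma}/\gamma$ and a threshold $\bar\e_k'$ determined by $M_0$ and the $O(\e)$ size of the perturbation. The paper's soft argument gives no constants, but it handles $z$ and $\e$ jointly by sequential compactness and never has to set up the $\e=0$ inverse separately.

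\textbf{Minor correction.} In your Step 2 the level-$(k+1)$ resolvent bound is only needed for $\cT_0$, where no smallness of $\e$ is involved. So the $(k+1)$ in the definition of $\bar\e_k$ is not what your argument actually uses; it is the paper's argument that needs it.
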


\begin{proof}
    First, we prove \eqref{eqn: no-eigen-on-Gamma} by contradiction. If no $\bar \e _k'$ exists, then we have a sequence $\e _j \to 0$ and a sequence of eigenvalue $\lambda _j \in \sigma (\cL _{\e _j}) \cap \Gamma$ with eigenvector $\phi _j$, normalized in $\cX ^k$. Compactness of $\Gamma$ ensures $\lambda _j$ converges to $\lambda _* \in \Gamma$ up to a subsequence. For each $\phi _j$, \eqref{eqn: magical-identity} implies
    \begin{align}
        \label{eqn: phi-j}
        (\lambda _j - \cL _{\e _j}) \phi _j = 0 \quad \leadsto \quad \phi _j =  \cR _{\lambda _j} (\cT _{\e _j}) \cK _{\e _j} \phi _j.
    \end{align}
    $\cKe$ is bounded $\cX ^k$ to $\cX ^{k + 1}$, and $\cR _z (\cTe)$ is bounded from $\cX ^{k + 1}$ to $\cX ^{k + 1}$ by \eqref{cor: resolvent}. Therefore, $\phi _j$ is uniformly bounded in $\cX ^{k + 1}$. By compact embedding $H ^{k + 1} \hookrightarrow H ^k$, a subsequence of $\phi _j$ converges to $\phi _* \ne 0$ in $\cX ^k$. 
    
    To confirm $\phi _*$ is indeed an eigenfunction of $\lambda _*$, we take the subsequential limit $j \to \infty$ in \eqref{eqn: phi-j}. The continuity of $\cKe$ in $\e$ is straightforward. Because $\nabla \om _{\e _j} \to \nabla \om _0$ in any $\cX ^k$ norm, we have
    \begin{align*}
        \| \cK _{\e _j} - \cK _0 \| _{\cX ^k \to \cX ^{k + 1}} \to 0 \qquad \text{ as } j \to \infty.
    \end{align*}
    Next we prove continuity of $\cR _z (\cT _\e)$ in $z$ and $\e$. By the first resolvent identity:
    \begin{align}
        \label{eqn: first-resolvent}
        \cR _{\lambda _j} (\cT _{\e _j}) - \cR _{\lambda _*} (\cT _{\e _j}) = (-\lambda _j +\lambda _*) \cR _{\lambda _j} (\cT _{\e _j}) \cR _{\lambda _*} (\cT _{\e _j}) \to 0
    \end{align}
    in $\cX ^k$ operator norm as $j \to \infty$, as both resolvent are bounded by Corollary \ref{cor: resolvent} and $\lambda _j \to \lambda _*$. By the second resolvent identity:
    \begin{align*}
        \cR _{\lambda _*} (\cT _{\e _j}) - \cR _{\lambda _*} (\cT _0) = \cR _{\lambda _*} (\cT _{\e _j}) \circ (-\cT _0 + \cT _{\e _j}) \circ \cR _{\lambda _*} (\cT _0).
    \end{align*}
    Because $\nabla \om _{\e _j} \to \nabla \om _0$ in any $\cX ^k$ norm, we have
    \begin{align}
        \label{eqn: continuity-cT}
        \| \cT _{\e _j} - \cT _0 \| _{\cX ^{k + 1} \to \cX ^k} \to 0 \qquad \text{ as } j \to \infty.
    \end{align}
    Using again resolvent $\cR _z (\cTe)$ are bounded in $\cX ^k$ and $\cX ^{k + 1}$, we conclude
    \begin{align}
        \label{eqn: convergence-RT}
        \| \cR _{\lambda _*} (\cT _{\e _j}) - \cR _{\lambda _*} (\cT _0) \| _{\cX ^{k + 1} \to \cX ^k} \to 0 \qquad \text{ as } j \to \infty.
    \end{align}
    In summary, we have
    \begin{align*}
        \| \phi _j - \phi _* \| _{\cX ^k} &\to 0, &
        \| \cK _{\e _j} - \cK _0 \| _{\cX ^k \to \cX ^{k + 1}} &\to 0, &
        \| \cR _{\lambda _j} (\cT _{\e _j}) - \cR _{\lambda _*} (\cT _0) \| _{\cX ^{k + 1} \to \cX ^k} &\to 0.
    \end{align*}
    Hence we can pass $j \to \infty$ in \eqref{eqn: phi-j}, and conclude 
    \begin{align*}
        \phi _* = \cR _{\lambda _*} (\cT _0) \cK _0 \phi _*.
    \end{align*}
    
    Using \eqref{eqn: magical-identity} again, we showed $(\lambda _*, \phi _*)$ is an eigenpair of $\cL _0$, but this is impossible because $\sigma (\cL _0) \subset \{ \lambda _0, -\lambda _0 \} \cup i \R$ by Proposition \ref{pro:Kolmogorov}, which is disjoint from $\Gamma$. This proves \eqref{eqn: no-eigen-on-Gamma} must hold for sufficiently small $\e$. 

    Uniform boundedness of $\cR _z (\cLe)$ in $\cX ^k$ norm is proven similar. Suppose not, then there exist $\e _j \to 0$, $z _j \in \Gamma$ and $f _j \in \cX ^k$, such that 
    \begin{align}
        \label{eqn: compactness-fk}
        \| f _j \| _{\cX ^k} &= 1, &
        \| (z _j - \cL _{\e _j}) f _j \| _{\cX ^k} & \le \frac1j 
        \quad \leadsto \quad 
        \lim _{j \to \infty} \| (\I -\cR _{z _j} (\cT _{\e _j}) \cK _{\e _j}) f _j \| _{\cX ^k} = 0.
    \end{align}
    Here we used \eqref{eqn: magical-identity} to show the convergence. Same as before, up to a subsequence, $z _j \to z _* \in \Gamma$, and $\cR _{z _j} (\cT _{\e _j}) \cK _{\e _j} f _j$ is uniformly bounded in $\cX ^{k + 1}$, so a subsequence converges in $\cX ^k$. By \eqref{eqn: compactness-fk}, we know $f _j \to f _*$ in $\cX ^k$ for some $\| f _* \| _{\cX ^k} = 1$, and via the same limiting process we have 
    \begin{align*}
        (\I - \cR _{z _*} (\cT _0) \cK _0) f _* = 0.
    \end{align*}
    By \eqref{eqn: magical-identity}, $z _* - \cL _0$ is not invertible, contradicting \eqref{eqn: no-eigen-on-Gamma}.
\end{proof}

We are ready to characterize the spectrum of $\cLe$, and prove convergence of the most unstable eigenvalue and eigenfunction.

\begin{proposition}[Persistence of eigenpair]
    \label{prop: eigenpair}
    For $\e \in [0, \bar \e _k']$ with $\bar \e _k' > 0$ given in Lemma \ref{lem:resolvent-L}, we have
    \begin{align*}
        \sigma (\cLe) \cap \{ z \in \C: \Re z \ge 2 \gamma\} = \{\lambda _\e\}.
    \end{align*}
    $\lambda _\e > 2 \gamma$ is a real simple eigenvalue that changes continuously in $\e$, and $\lambda _\e \to \lambda _0$ as $\e \to 0$. The associated eigenfunction $\fe$ also changes continuously in $\e$ and $\fe \to \phi _0$ in $\cX ^k$. Moreover, the Riesz projection $\P _{\lambda _\e}$ of $\cLe$ converges to Riesz projection $\P _{\lambda _0}$ of $\cL _0$ in $\cX ^k$ operator norm.
\end{proposition}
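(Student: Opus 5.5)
The plan is to define the Riesz projection over the contour $\Gamma = \partial K$ from Lemma \ref{lem:resolvent-L},
\begin{align*}
    \P _\e := \frac1{2 \pi i} \oint _{\Gamma} \cR _z (\cLe) \d z,
\end{align*}
which is well defined in $\cX ^k$ because $\cR _z (\cLe)$ is uniformly bounded for $z \in \Gamma$ and $\e \in [0, \bar \e _k']$. I will first show that $\P _\e \to \P _0$ in $\cX ^k$ operator norm. By the second resolvent identity,
\begin{align*}
    \cR _z (\cLe) - \cR _z (\cL _0) = \cR _z (\cLe) (\cLe - \cL _0) \cR _z (\cL _0).
\end{align*}
Here $\cLe - \cL _0 = (\cTe - \cT _0) + (\cKe - \cK _0)$ is small from $\cX ^{k + 1}$ to $\cX ^k$ by \eqref{eqn: continuity-cT} and the continuity of $\cKe$. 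This only gives convergence from $\cX ^{k + 1}$ to $\cX ^k$, so I need to upgrade it. The trick is to write $\cR _z (\cL _0) = \cR _z (\cT _0) + \cR _z (\cT _0) \cK _0 \cR _z (\cL _0)$, which follows from \eqref{eqn: magical-identity}. The second piece gains a derivative because $\cK _0$ maps $\cX ^k \to \cX ^{k+1}$, and $\cR _z(\cT_0)$ is bounded on $\cX^{k+1}$ by Corollary \ref{cor: resolvent}.

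For the first piece, I will instead expand $\cR_z(\cLe)-\cR_z(\cL_0)$ symmetrically. Concretely, I subtract the two factorizations $(\I - \cR _z (\cT _\e) \cK _\e) ^{-1} \cR _z (\cT _\e)$ and handle the difference $\cR_z(\cT_\e)-\cR_z(\cT_0)$ directly.

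The main obstacle is the direct convergence of $\cR _z (\cTe) \to \cR _z (\cT _0)$ in $\cX ^k \to \cX ^k$ norm. The difference $\cTe - \cT _0 = -(\ue - u _0)\cdot\nabla$ loses a derivative, and the transport resolvent does not gain one. My fallback is that norm convergence of the full projection is not actually needed in the $\cX^k\to\cX^k$ sense on the transport part. The projection difference can be written as $\frac{1}{2\pi i}\oint_\Gamma$ of terms that always contain a factor $\cK$. The bare transport term $\oint_\Gamma \cR_z(\cT_\e)\d z$ vanishes, since $\cR_z(\cT_\e)$ is analytic inside $K$: $\sigma(\cTe)\subset\{|\Re z|\le\gamma\}$ and $K\subset\{\Re z\ge 2\gamma\}$. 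Using
\begin{align*}
    \cR_z(\cLe) = \cR_z(\cTe) + \cR_z(\cTe)\cKe\cR_z(\cLe),
\end{align*}
I get $\P_\e = \frac{1}{2\pi i}\oint_\Gamma \cR_z(\cTe)\cKe\cR_z(\cLe)\d z$. In this form every term contains $\cKe$, which gains a derivative, so the $\cX^{k+1}\to\cX^k$ smallness bounds suffice to get $\|\P_\e-\P_0\|_{\cX^k\to\cX^k}\to0$.

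Once the projections converge in norm, the rest follows a standard route. For small $\e$ we have $\|\P_\e-\P_0\|<1$, so $\operatorname{rank}\P_\e=\operatorname{rank}\P_0$ (Kato, Section I.4.6). By Proposition \ref{pro:Kolmogorov}, $\lambda_0$ is the only spectrum of $\cL_0$ in $K$ and it is algebraically simple, so $\operatorname{rank}\P_0=1$. Hence $\sigma(\cLe)\cap K$ is a single simple eigenvalue $\lambda_\e$. Lemma \ref{lem: eigen-bounded} excludes spectrum with $\Re z\ge2\gamma$ outside $B_R(0)$, which gives $\sigma(\cLe)\cap\{\Re z\ge2\gamma\}=\{\lambda_\e\}$.

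To see that $\lambda_\e$ is real, I use that $\cLe$ is a real operator, since $\ue$ is real. Then $\bar\lambda_\e$ is also an eigenvalue in $K$, and uniqueness forces $\lambda_\e=\bar\lambda_\e$.

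Next I set $\fe:=\P_\e\phi_0$. This is nonzero for small $\e$ since $\P_\e\phi_0\to\phi_0$, and it depends continuously on $\e$ in $\cX^k$ with $\fe\to\phi_0$. Finally, $\lambda_\e=(\cLe\fe,\fe)/(\fe,\fe)$, which I will rewrite as $\lambda_\e = \operatorname{tr}(\cLe\P_\e) = \frac{1}{2\pi i}\oint_\Gamma z\,\cR_z(\cLe)\d z$ acting on the range. The same contour argument then gives continuity of $\lambda_\e$ and $\lambda_\e\to\lambda_0$. Since $\lambda_0>8\gamma$, this also shows $\lambda_\e>2\gamma$ for small $\e$.
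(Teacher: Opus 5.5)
Your argument is correct, but you upgrade $\|\P_\e-\P_0\|_{\cX^{k+1}\to\cX^k}\to0$ to convergence in $\cX^k$ operator norm differently from the paper. Both proofs remove $\oint_\Gamma\cR_z(\cTe)\,\d z$ using analyticity of the transport resolvent on $K$.

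\textbf{The paper's route.} It uses the resulting formula $\P_\e=\frac1{2\pi i}\oint_\Gamma\cR_z(\cLe)\cKe\cR_z(\cTe)\,\d z$ only to get a uniform smoothing bound $\|\P_\e\|_{\cX^{k-1}\to\cX^k}\le M$. It then interpolates this against $\delta_\e=\|\P_\e-\P_0\|_{\cX^{k+1}\to\cX^k}$ by splitting the input with a Fourier cutoff $F_N$. This gives $\|\P_\e-\P_0\|_{\cX^k\to\cX^k}\le C\delta_\e(1+N)+2CM/(1+N)$.

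\textbf{Your route.} You subtract the integrands of the mirror formula $\frac1{2\pi i}\oint_\Gamma\cR_z(\cTe)\cKe\cR_z(\cLe)\,\d z$ directly, so each derivative-losing difference sits next to a smoothing $\cK$.

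\textbf{Comparison.} Your version is more direct and quantitative: there is no truncation parameter, and the rate is governed by $\|\cLe-\cL_0\|$ in derivative-losing norms. The same computation with weight $z$ also gives $\cLe\P_\e\to\cL_0\P_0$ in norm, hence $\lambda_\e=\operatorname{tr}(\cLe\P_\e)\to\lambda_0$. The paper's route is softer. It needs only a uniform smoothing bound plus convergence in a weaker norm, and it never has to track which side of $\cK$ each difference sits on.

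\textbf{A step to make explicit.} Your claim that ``the $\cX^{k+1}\to\cX^k$ smallness bounds suffice'' is not literally true for the term $\cR_z(\cT_0)\cK_0\bigl(\cR_z(\cLe)-\cR_z(\cL_0)\bigr)$. There the difference lies to the right of $\cK_0$, so you need it small from $\cX^k$ to $\cX^{k-1}$. Write it as $\cR_z(\cL_0)(\cLe-\cL_0)\cR_z(\cLe)$. Then you need only three things: the level-$k$ bound on $\cR_z(\cLe)$ from Lemma \ref{lem:resolvent-L}, smallness of $\cLe-\cL_0$ from $\cX^k$ to $\cX^{k-1}$, and invertibility of $z-\cL_0$ on $\cX^{k-1}$ uniformly for $z\in\Gamma$. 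For $k=0$ this requires an $H^{-1}$ setting, just as the paper's use of $\cX^{-1}$ does.

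\textbf{Minor slips.} Since $\gamma=\lambda_0/8$, you have $\lambda_0=8\gamma$, not $\lambda_0>8\gamma$. Also, $\lambda_\e>2\gamma$ follows more cleanly, as in the paper, from $\lambda_\e\in K$ together with $\sigma(\cLe)\cap\Gamma=\varnothing$, rather than from $\lambda_\e\to\lambda_0$.
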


\begin{proof}

    Since $\Gamma$ contains no eigenvalue of $\cLe$ for sufficiently small $\e$ and $\cR _z (\cLe)$ is bounded for $z \in \Gamma$, we are able to define the spectrum projection 
    \begin{align}
        \label{eqn: riesz-projection}
        \P _\e f := \frac1{2 \pi i} \oint _\Gamma \cR _z (\cLe) f \d z.
    \end{align}
    Next we show convergence $\P _\e \to \P _0$. Fix $z \in \Gamma$. By the second resolvent identity implies 
    \begin{align}
        \label{eqn: second-resolvent-id}
        \cR _z (\cLe) - \cR _z (\cL _0) = \cR _z (\cLe) \circ (-\cL _0 + \cLe) \circ \cR _z (\cL _0).
    \end{align}
    As $\| \cLe - \cL _0 \| _{\cX ^{k + 1} \to \cX ^k} \to 0$, we have
    \begin{align*}
        \| \cR _z (\cLe) - \cR _z (\cL _0) \| _{\cX ^{k + 1} \to \cX ^k} \to 0 \qquad \text{ as } \e \to 0, \qquad \text{ uniformly for } z \in \Gamma.
    \end{align*}
    Together with boundedness of resolvent in Lemma \ref{lem:resolvent-L}, we get $\delta_\e:=\|\P_\e-\P_0\|_{\cX^{k+1}\to\cX^k}\to0$.
  Moreover,  the identity
\[
 \cR_z(\cLe)=\cR_z(\cTe)+\cR_z(\cLe)\cKe\cR_z(\cTe)
\]
and analyticity of the transport resolvent inside $\Gamma$ give
\[
 \P_\e=\frac{1}{2\pi i}\oint_\Gamma
 \cR_z(\cLe)\cKe\cR_z(\cTe)\d z,
 \qquad \|\P_\e\|_{\cX^{k-1}\to\cX^k}\le M.
\]
Now let $F_N$ be the Fourier cutoff to frequencies of length at most $N$, there exists $C>0$ such that 
\[
 \|F_N\|_{H^k\to H^{k+1}}\le C(1+N),\qquad
 \|I-F_N\|_{H^k\to H^{k-1}}\le C/(1+N),
\]
here $\cX^{-1}$ denote the Hilbert space endowed with $H^{-1}$ norm.
Thus
\[
 \|\P_\e-\P_0\|_{\cX^k\to\cX^k}
 \le C\delta_\e(1+N)+2CM/(1+N).
\]
In particular, by letting $N$ large and $\e$ small, we have $\lim_{\e\rightarrow 0} \|\P_\e-\P_0\|_{\cX^{k}\to\cX^k}=0$ so that $\Pe$ and $\P _0$ have the same rank. Since $\lambda _0$ is the unique eigenvalue of $\cL _0$ in $K$, we know $\P _0$ has rank 1, so $\P _\e$ also has rank 1, implying $\cLe$ has a unique eigenvalue $\lambda _\e$ in $K$ which is simple. This shows $\P _\e$ is exactly the Riesz projection of $(\lambda _\e, \fe)$ for operator $\cLe$. Since $\Pe \to \P _0$, the normalized eigenfunctions converge $\fe \to \phi _0$ in $\cX ^k$, and together with $\| \cLe - \cL _0 \| _{\cX ^{k + 1} \to \cX ^k} \to 0$ we conclude $\lambda _\e \to \lambda _0$. Finally, because $(\bar \lambda _\e, \bar \phi _\e)$ is another eigenpair of $\cLe$ in $K$, we know $\lambda _\e$ must be real, and \eqref{eqn: no-eigen-on-Gamma} shows $\lambda _\e > 2 \gamma$. 

    As we have shown that $\lambda _\e$ is the only point of $\sigma (\cLe)$ in $K$, we can show $\lambda _\e$ changes in $\e$ continuously in the same way as we show $\lambda _\e \to \lambda _0$ as $\e \to 0$. Continuity of eigenfunction in $\e$ follows the continuity of projection.
\end{proof}

Finally we prove the projected semigroup bound. Before that let us recall the classical Gearhart--Pr\"uss--Huang--Greiner theorem.

\begin{proposition}[Theorem 1.3.1 in \cite{LiuZheng1999}.]
\label{thm:GPHG}
Let $\mf X$ be a  Hilbert space, and let
\begin{align*}
    \mf A: \mf X \supset D(\mf A) \to \mf X
\end{align*}
be the generator of a strongly continuous semigroup $\{\expo{t \mf A}\} _{t \ge 0}$.
Then the following statements are equivalent:
\begin{enumerate}
\item There exist constants $M \ge 1$ and $\eta > 0$ such that
\begin{align*}
    \|\expo{t \mf A} \|_{\mf X \to \mf X}
    \le M \expo{-\eta t}, 
    \qquad t\ge 0.
\end{align*}

\item The open right half-plane belongs to the resolvent set of $\mf A$, i.e.
\begin{align*}
    \C_+ := \{z\in\C:\operatorname{Re}z>0\} \subset \rho(\mf A),
\end{align*}
and we have the following uniform bound on the resolvent:
\begin{align*}
    \sup_{\Re z>0}
    \|(z \I - \mf A) ^{-1} \|_{\mf X \to \mf X}
    <\infty.
\end{align*}
\end{enumerate}
\end{proposition}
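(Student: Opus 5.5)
The plan is to prove the two implications separately. The direction (1)$\Rightarrow$(2) is elementary. The direction (2)$\Rightarrow$(1) uses the Hilbert space structure through Plancherel's theorem, and that is where the real work lies.

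\emph{(1)$\Rightarrow$(2).} Assume $\|\expo{t\mf A}\|\le M\expo{-\eta t}$. For $\Re z>-\eta$ the Laplace transform
\begin{align*}
    \tilde\cR(z) x:=\int_0^\infty \expo{-zt}\expo{t\mf A}x\d t
\end{align*}
converges absolutely. Exactly as in the proof of Corollary \ref{cor: resolvent}, one checks $(z-\mf A)\tilde\cR(z)=\tilde\cR(z)(z-\mf A)=\I$ on the appropriate domains. Hence $\C_+\subset\rho(\mf A)$, and
\begin{align*}
    \|(z-\mf A)^{-1}\|\le \frac{M}{\Re z+\eta}\le \frac{M}{\eta}.
\end{align*}

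\emph{(2)$\Rightarrow$(1), Step 1: an $L^2$-in-time bound.} Assume $\sup_{\Re z>0}\|(z-\mf A)^{-1}\|=:K<\infty$. Let $\omega_0$ be the growth bound of the semigroup and fix $\omega>\max\{\omega_0,0\}$. For $x\in\mf X$, the function $t\mapsto \expo{-\omega t}\expo{t\mf A}x\,\mathbf 1_{t\ge0}$ lies in $L^2(\R;\mf X)$, and its Fourier transform is $\tau\mapsto(\omega+i\tau-\mf A)^{-1}x$. Plancherel's theorem (valid for Hilbert-space-valued functions) then gives
\begin{align*}
    \int_\R\|(\omega+i\tau-\mf A)^{-1}x\|^2\d\tau = 2\pi\int_0^\infty \expo{-2\omega t}\|\expo{t\mf A}x\|^2\d t\le C_\omega\|x\|^2 .
\end{align*}
Now take $0<\alpha\le\omega$. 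The resolvent identity
\begin{align*}
    (\alpha+i\tau-\mf A)^{-1}=\bigl[\I+(\omega-\alpha)(\alpha+i\tau-\mf A)^{-1}\bigr](\omega+i\tau-\mf A)^{-1}
\end{align*}
together with the uniform bound $K$ shows that
\begin{align*}
    \|(\alpha+i\tau-\mf A)^{-1}x\|_{L^2_\tau}\le (1+\omega K)\sqrt{C_\omega}\,\|x\|,
\end{align*}
uniformly in $\alpha\in(0,\omega]$. Applying Plancherel again at level $\alpha$ gives $\int_0^\infty\expo{-2\alpha t}\|\expo{t\mf A}x\|^2\d t\le C\|x\|^2$. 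Letting $\alpha\to0^+$ by monotone convergence yields
\begin{align*}
    \int_0^\infty\|\expo{t\mf A}x\|^2\d t\le C\|x\|^2 .
\end{align*}
Since $\mf X$ is Hilbert, $\mf A^*$ generates the adjoint semigroup $(\expo{t\mf A})^*$, and $\|(\bar z-\mf A^*)^{-1}\|=\|(z-\mf A)^{-1}\|$. The same argument therefore gives $\int_0^\infty\|\expo{t\mf A^*}y\|^2\d t\le C\|y\|^2$.

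\emph{(2)$\Rightarrow$(1), Step 2: decay.} For $x,y\in\mf X$ write
\begin{align*}
    t\,\langle \expo{t\mf A}x,y\rangle=\int_0^t\langle \expo{(t-s)\mf A}x,\expo{s\mf A^*}y\rangle\d s .
\end{align*}
Cauchy--Schwarz and Step 1 give $t\,|\langle \expo{t\mf A}x,y\rangle|\le C\|x\|\|y\|$, so $\|\expo{t\mf A}\|\le C/t$. Combined with the local bound on $[0,1]$, this shows the semigroup is uniformly bounded. Moreover, $\|\expo{t_0\mf A}\|\le\frac12$ for some $t_0$. The semigroup property then yields $\|\expo{t\mf A}\|\le M\expo{-\eta t}$ with $\eta=\log 2/t_0$.

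\emph{Main obstacle.} The delicate step is Step 1, specifically identifying $(\alpha+i\tau-\mf A)^{-1}x$ as the Fourier transform of the damped orbit when $\alpha$ may lie below the a priori growth bound. I would handle this by applying Plancherel only for $\Re z=\omega>\omega_0$, where the identification is the Laplace-transform formula from Corollary \ref{cor: resolvent}. For smaller $\alpha$, I would first transfer the $L^2_\tau$ bound through the resolvent identity. I would then run a bootstrap argument: the uniform $L^2$ bound at level $\alpha$ shows the growth bound is at most $\alpha$, which legitimizes Plancherel at every smaller positive level, and so on down to $\alpha\to0^+$.
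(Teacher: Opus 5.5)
The paper gives no proof of this statement. It is quoted from Liu--Zheng and used only in \cref{lem: semigroup}, with $\mf A=\cLe\circ\Q_\e-2\gamma\I$ on $\Q_\e\cX^k$ carrying the $H^k$ inner product. Your argument is therefore a genuinely different, self-contained route, and it is the standard Pr\"uss/Weiss proof. It has three parts:
\begin{itemize}
\item the Laplace transform for (1)$\Rightarrow$(2);
\item Plancherel on a vertical line $\Re z=\omega$ beyond the growth bound, transferred to every line $\Re z=\alpha\in(0,\omega]$ by the resolvent identity with an $\alpha$-independent constant;
\item the duality identity $t\langle \expo{t\mf A}x,y\rangle=\int_0^t\langle \expo{(t-s)\mf A}x,\expo{s\mf A^*}y\rangle\d s$, used in place of Datko's theorem.
\end{itemize}
Compared with the bare citation, it costs a page. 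In exchange it shows exactly where the Hilbert structure enters (Plancherel and the adjoint semigroup) and that contractivity is never used. That is what the paper actually needs, since the semigroup generated by $\cLe\circ\Q_\e$ in the $H^k$ norm has no reason to be contractive.

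One step needs to be stated properly: the bootstrap. As you word it (``the uniform $L^2$ bound at level $\alpha$ shows the growth bound is at most $\alpha$''), it is circular. Plancherel at level $\alpha$ is only justified once $\alpha$ already exceeds the growth bound $\omega_0$, and at that point the conclusion tells you nothing new. What closes it is the $\alpha$-uniformity of your constant, which you already have.
\begin{itemize}
\item For every $\alpha\in(\max\{\omega_0,0\},\omega]$, Step 1 gives the $L^2$-in-time bounds for $\expo{-\alpha t}\expo{t\mf A}$ and for its adjoint with one constant $C$.
\item Step 2 applied to this rescaled semigroup then gives $\|\expo{-\alpha t}\expo{t\mf A}\|\le C/t$, hence $\omega_0\le\alpha-\frac{\log 2}{2C}$.
\item Letting $\alpha\downarrow\max\{\omega_0,0\}$ rules out $\omega_0\ge 0$. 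In fact it gives $\omega_0\le-\frac{\log 2}{2C}$, which is (1) directly.
\end{itemize}
Alternatively, you can avoid the bootstrap altogether. The map $z\mapsto(z-\mf A)^{-1}x$ lies in the vector-valued Hardy space $H^2(\C_+;\mf X)$ with norm $\lesssim\|x\|$: use Plancherel for $\Re z\ge\omega$ and your resolvent identity for $0<\Re z\le\omega$. By Paley--Wiener it is the Laplace transform of some $g\in L^2(0,\infty;\mf X)$. Uniqueness of the Laplace transform on $\Re z>\omega$ then gives $g(t)=\expo{t\mf A}x$ for a.e. $t$, so $\int_0^\infty\|\expo{t\mf A}x\|^2\d t\lesssim\|x\|^2$ without any induction on levels.
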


\begin{lemma}[Bounds on the semigroup]
    \label{lem: semigroup}
    Denote $\Q _\e = \I - \P _\e$. Then there exists $M > 0$ depending on $\ue$ and $m$ such that 
    \begin{align*}
        \left\| 
            \expo{t \cLe} \Q _\e
        \right\| _{\cX ^k \to \cX ^k} \le M^2 \expo{2 \gamma t}.
    \end{align*}
\end{lemma}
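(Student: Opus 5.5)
The plan is to apply the Gearhart--Pr\"uss--Huang--Greiner theorem (\cref{thm:GPHG}) to the shifted operator $\mf A := (\cLe - 3\gamma)|_{\Q_\e \cX^k}$ on the closed invariant subspace $\mf X := \Q_\e \cX^k$, equipped with the $\cX^k$ inner product. This yields $\|\expo{t\cLe}\Q_\e\| \lesssim \expo{3\gamma t}$, and a slightly smaller shift gives the stated $\expo{2\gamma t}$ (shifting by $2\gamma+\eta$ and then absorbing $\eta$). Since $\lambda_0 = 8\gamma$, any rate strictly between $\gamma$ and $\lambda_\e$ suffices for \eqref{eq:euler-complement-outline}.

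\textbf{Step 1: semigroup and invariance.} First, $\cLe$ generates a $C_0$ semigroup on $\cX^k$. Indeed, $\cTe$ generates the composition group $g\mapsto g\circ\Phie(-t)$, bounded by \cref{lem: transport semigroup}, and $\cKe$ is a bounded perturbation on $\cX^k$. The bounded-perturbation theorem then gives $\|\expo{t\cLe}\|_{\cX^k\to\cX^k}\le C\expo{(\gamma+\|\cKe\|)t}$. Since $\P_\e$ commutes with $\cLe$, the range of $\Q_\e$ is invariant under the semigroup, and the restricted generator has spectrum $\sigma(\cLe)\setminus\{\lambda_\e\}$. By \cref{prop: eigenpair} this spectrum lies in $\{\Re z<2\gamma\}$.

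\textbf{Step 2: uniform resolvent bound on $\{\Re z\ge 3\gamma\}$.} I split this half-plane into three regions.

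For $|z|>R$, the factorization \eqref{eqn: magical-identity} together with \eqref{eqn:1/z-bound} gives
\[
\cR_z(\cLe)=(\I-\cR_z(\cTe)\cKe)^{-1}\cR_z(\cTe),
\]
so $\|\cR_z(\cLe)\|\le 2C_{k,\gamma}/(\Re z-\gamma)\le C/\gamma$ by \cref{cor: resolvent}.

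For $z$ in the compact set $K'=\{\Re z\ge 3\gamma,\ |z|\le R\}$, the restricted resolvent $\cR_z(\cLe)\Q_\e$ is analytic, because the pole at $\lambda_\e$ is removed by $\Q_\e$. It is therefore continuous and bounded on $K'$.

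Outside these two regions the first bound already applies, so $\sup_{\Re z>0}\|(z-\mf A)^{-1}\|<\infty$. Here $\|\Q_\e\|\le 1+\|\P_\e\|$ is finite, and the projections converge by \cref{prop: eigenpair}.

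\textbf{Step 3: conclusion.} \cref{thm:GPHG} applied to $\mf A$ gives $\|\expo{t\mf A}\|\le M\expo{-\eta t}$, hence $\|\expo{t\cLe}\Q_\e f\|\le M\expo{3\gamma t}\|\Q_\e f\|\le M^2\expo{3\gamma t}\|f\|$ after enlarging $M$. To reach exactly $2\gamma$, I would repeat the argument with a shift of $2\gamma+\eta'$ for small $\eta'$, using that $\sigma(\cLe)\cap\{\Re z\ge 2\gamma\}=\{\lambda_\e\}$ up to the boundary $\Gamma$, which is resolvent-free by \cref{lem:resolvent-L}. Alternatively, I would redefine $\gamma$ slightly.

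\textbf{Main obstacle.} The hardest step is the uniform resolvent bound near the vertical line $\Re z = 2\gamma$ for large $|\Im z|$. There the contour argument does not apply, and one must rely on the large-$|z|$ estimate \eqref{eqn:1/z-bound}. That estimate needs $\Re z>\gamma$ strictly, uniformly in $\e\le\bar\e_k'$, which holds with our choice of $\bar\e_k$. A second technical point is the Hilbert-space requirement in \cref{thm:GPHG}: $\cX^k$ with the $H^k$ inner product and its closed subspace $\Q_\e\cX^k$ are Hilbert spaces. I would state that the constant $M$ depends on $\e$, $k$ and $m$, which is what \cref{pro:Euler_spectral} needs.
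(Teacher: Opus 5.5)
Your proposal is correct and follows the paper's route. You apply Gearhart--Pr\"uss on the invariant subspace $\Q_\e\cX^k$, and you split the resolvent bound into the region $|z|>R$, handled by the Neumann-series estimate from $z-\cLe=(z-\cTe)(\I-\cR_z(\cTe)\cKe)$, and the compact region $K$. There are two minor differences. First, on $K$ you use analyticity of the reduced resolvent $\cR_z(\cLe)\Q_\e$ for fixed $\e$, instead of the paper's compactness/contradiction argument; the paper's argument also gives uniformity in $\e$, which the statement does not need. Second, you can shift by exactly $2\gamma$ as the paper does, since $K$ is compact and disjoint from $\sigma(\cLe)\setminus\{\lambda_\e\}\subset\{\Re z<2\gamma\}$, so your $3\gamma$-then-adjust detour is unnecessary. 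As phrased, that detour (shifting by $2\gamma+\eta'$) would also not literally give $2\gamma$, since Gearhart--Pr\"uss only yields rate $2\gamma+\eta'-\eta$ with no control on $\eta$ versus $\eta'$.
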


\begin{proof}
    As the spectral projection $\Q _\e$ removes the eigenvalue $\lambda _\e$ from the spectrum of $\cL _\e$, we know that $\sigma (\cLe \circ \Q _\e) \subset \{ z \in \C: \Re z \le 2 \gamma \}$. We will show $\cR _z (\cLe \circ \Q _\e)$ is bounded in $\X _\e ^k := \Qe \cX ^k$ operator norm uniformly in $\{ \Re z > 2 \gamma \}$ for $\e \in [0, \bar \e _k']$. It is bounded for $|z| > R$ by Lemma \ref{lem: eigen-bounded}, so we only need to show $\cR _z (\cLe \circ \Q _\e)$ is bounded uniformly for $z \in K$.

    Suppose otherwise, then we have a sequence of $f _j \in \X _\e ^k$ with $\| f _j \| _{\cX ^k} = 1$, $\e _j \in [0, \bar \e _k']$, $z _j \in K$, that 
    \begin{align*}
        \| (z _j - \cL _{\e _j} \circ \Q _{\e _j}) f _j \| _{\cX ^k} \to 0 \quad \leadsto \quad \| (z _j - \cL _{\e _j}) f _j \| _{\cX ^k} \to 0.
    \end{align*}
    In view of \eqref{eqn: magical-identity} and resolvent bound Corollary \ref{cor: resolvent}, we have 
    \begin{align*}
        \| (\I - \cR _{z _j} (\cT _{\e _j}) \cK _{\e _j}) f _j \| _{\cX ^k} \to 0 \qquad \text{ as } j \to \infty.
    \end{align*}
    Same as the scenario in \eqref{eqn: compactness-fk}, we get $f _j \to f _*$ in $\cX ^k$ up to a subsequence, and 
    \begin{align*}
        z _j &\to z _*, &
        \e _j &\to \e _*, &
        f _j &\to f _* \text{ in } \cX ^k, & 
        \quad \leadsto \quad 
        (\I - \cR _{z _*} (\cT _{\e _*}) \cK _{\e _*}) f _* &= 0 \quad \leadsto \quad \cL _{\e _*} f _* = z _* f _*.
    \end{align*}
    Since $\Re z _* \ge 2 \gamma > 0$, we know $f _*$ is the eigenfunction of eigenvalue $z _* = \lambda _{\e _*}$, thus $\P _{\e _*} f _* = f _*$. However, $f _j \in \X ^k _{\e _j}$ implies $\Q _{\e _j} f _j = f _j$ and $\P _{\e _j} f _j = 0$. Passing $j \to \infty$ yields $\P _{\e _*} f _* = f _* = 0$, contradicting with $f _* = \lim _{j \to \infty} f _j$ in $H ^k$ where $\| f _j \| _{H ^k} = 1$.

    We have shown that $\cR _z (\cLe \circ \Q _\e)$ is bounded in $\X ^k _\e$, uniformly for $\{\Re z > 2 \gamma\}$ and $\e \in [0, \bar \e _k']$. By applying Proposition \ref{thm:GPHG} to $\mf A = \cLe \circ \Q _\e - 2 \gamma \I$, we have 
    \begin{align*}
        \lVert \expo{t \cLe \Q _\e} \rVert _{\X ^k _\e \to \X ^k _\e} \le M \expo{(2 \gamma - \eta) t} \le M \expo{2 \gamma t}.
    \end{align*}
    Now take $M\geq \| \Qe \|_{\X ^k\to \X ^k}$,
    we have  $\lVert \expo{t \cLe} \Qe \rVert _{\cX ^k \to \cX ^k} \leq \lVert \expo{t \cLe \Q _\e} \rVert _{\X ^k _\e \to \X ^k _\e} \| \Qe \|_{\X ^k\to \X ^k}\leq M^2 e^{2\gamma t}$.
\end{proof}

\begin{proof}[Proof of Proposition \ref{pro:Euler_spectral}, Item 1.~and 3.]
    These two statements have been proven by Proposition \ref{prop: eigenpair} and Lemma \ref{lem: semigroup}, respectively, by choosing $\e ^{**} \le \min _{0 \le k \le 2 m} \bar \e _k '$ sufficiently small such that for all $0 \le \e \le \e ^{**} $ we have  $2 \gamma := \frac14 \lambda _0 < \frac12 \lambda _\e$.
\end{proof}

\section{Continuous fraction}
\label{sec:continuous fraction}

This section provides a self-contained proof on the convergence of continued fraction and its properties.

\begin{lemma}
    \label{prop:friedlander1998}
    Let $\{\beta_n\}_{n=1}^\infty$ be a sequence of positive numbers such that $\lim _{n\to\infty} \beta _n = c > 0$. For all $\lambda>0$, define 
    \begin{align*}
        F_n(\lambda)
        &:= [\lambda \beta_1, \lambda \beta_2, \dots, \lambda \beta_n] = \frac1{
            \lambda \beta_1 + \dfrac1{
                \lambda \beta_2 + \dfrac1{
                    \begin{matrix}
                        \ddots & \\
                        &\displaystyle\frac1{\lambda \beta_n}
                    \end{matrix}
                }
            }
        }.
    \end{align*}
    Then for all $\lambda>0$, $\lim_{n\to\infty} F_n (\lambda) = F(\lambda)$, where $F$ is a positive continuous function in $\R^+$, with $\lim_{\lambda\to0^+}F(\lambda)=1$, and $\lim_{\lambda\to+\infty}F(\lambda)=0$. Moreover, for every fixed $\lambda>0$, each shifted infinite fraction is well-defined, and
    \begin{align}
        \label{fried_con1}
        &\lim_{n\to\infty} [\lambda\beta_{n+1},\lambda\beta_{n+2},\dots] 
        =\sqrt{1+\left(\frac{\lambda c}{2}\right)^2} -\frac{\lambda c}{2}.
    \end{align}
\end{lemma}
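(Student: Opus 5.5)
The plan is to write the finite fractions through Möbius maps and to use the classical theory of continued fractions with positive partial denominators. Set $a_n := \lambda\beta_n > 0$ and introduce the convergents $p_n/q_n$ through the usual recursions $p_n = a_n p_{n-1} + p_{n-2}$ and $q_n = a_n q_{n-1} + q_{n-2}$, with suitable initial values. With these, $F_n(\lambda) = p_n/q_n$, and the determinant identity gives
\begin{align*}
    F_n - F_{n-1} = \frac{(-1)^{n-1}}{q_n q_{n-1}}.
\end{align*}
Every $a_n$ is positive, so the even convergents increase, the odd convergents decrease, and all of them lie in $(0, 1/a_1)$. The Seidel--Stern criterion states that the fraction converges exactly when $\sum a_n = \infty$. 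Here $a_n \to \lambda c > 0$, so the sum diverges and $F(\lambda) = \lim F_n(\lambda)$ exists. I will also show that $q_n$ grows at least geometrically, because $q_n \ge \min(1, \lambda c/2)\, (q_{n-1}+q_{n-2})$ for large $n$. The same argument applies to each shifted sequence $\{\beta_{n+k}\}$, so every shifted infinite fraction $G_k := [a_{k+1}, a_{k+2}, \dots]$ is well defined and positive. These tails satisfy $G_k = 1/(a_{k+1} + G_{k+1})$.

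For continuity in $\lambda$, each $F_n$ is continuous and $|F - F_n| \le 1/(q_n q_{n+1})$. On a compact set $\lambda \in [\lambda_1, \lambda_2] \subset (0,\infty)$, the lower bound $\beta_n \ge b > 0$ makes the growth of $q_n$ uniform, so $F_n \to F$ uniformly there and $F$ is continuous. Positivity of $F$ comes from $F = 1/(a_1 + G_1)$ together with $G_1 \le 1/a_2 < \infty$. The limit as $\lambda \to \infty$ follows from $0 < F \le 1/(\lambda\beta_1) \to 0$.

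The step I expect to be the main obstacle is $\lambda \to 0^+$, because there the convergence rate degenerates. My plan is a two-sided estimate using the monotone convergents. The even convergent $F_2 = a_2/(a_1a_2+1)$ gives a lower bound, and I will compare this with the odd convergents $F_{2j+1}$, which bound $F$ from above. Since $F_{2j} \le F \le F_{2j+1}$ for every $j$, it suffices to show that for each $\eta > 0$ there is a fixed $j$ with both $F_{2j}$ and $F_{2j+1}$ within $\eta$ of $1$ when $\lambda$ is small. This fails for fixed $j$: as $\lambda \to 0$ the odd convergents tend to $\infty$ (for instance $F_1 = 1/a_1$), so the odd ones do not work directly. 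I will instead use the tail recursion. Fix $N$ so that $\beta_n \in [c/2, 2c]$ for $n > N$. Comparing $G_N$ with the periodic fraction with constant partial denominators $\lambda c/2$ and $2\lambda c$ (the constant continued fractions have explicit values $\sqrt{1+(a/2)^2}-a/2$) shows that $G_N \to 1$ as $\lambda \to 0$. This comparison is valid because the map $(a_1,a_2,\dots)\mapsto[a_1,\dots]$ is monotone alternately in the even and odd entries, so I need a careful sandwich: replace the even-index entries by one bound and the odd-index entries by the other, which gives two-periodic fractions that are still explicitly computable and both tend to $1$. Then $G_{k-1} = 1/(a_k + G_k)$ has only finitely many steps with $a_k \to 0$ down to $F = G_0$, and since $x \mapsto 1/(a+x)$ is continuous at $x = 1$, $a = 0$, I get $F \to 1$.

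Finally, \eqref{fried_con1} follows the same way at fixed $\lambda$. For large $n$ the entries of $G_n$ lie in $[\lambda(c-\eta), \lambda(c+\eta)]$. The two-periodic sandwich gives
\begin{align*}
    \sqrt{1+(\lambda c'/2)^2} - \lambda c'/2
\end{align*}
type bounds, where $c'$ ranges over a set that shrinks to $c$ as $\eta \to 0$. Letting $\eta \to 0$ yields the stated limit.
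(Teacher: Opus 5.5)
Your convergence and continuity arguments are a valid alternative to the paper's 1-Lipschitz squeeze. You use Seidel--Stern and the determinant identity for convergence, then uniform convergence on $[\lambda_1,\infty)$ for continuity. The step you flag yourself, $\lambda\to0^+$, fails as written: with the fixed band $\beta_n\in[c/2,2c]$, the two-periodic comparison fractions do not tend to $1$. Indeed, set $x=[\lambda a,\lambda b,\lambda a,\dots]$ and $y=[\lambda b,\lambda a,\dots]$. Then $x=1/(\lambda a+y)$ and $y=1/(\lambda b+x)$ give $ax=by$ and $x(\lambda a+y)=1$. So $xy\to1$ and $x\to\sqrt{b/a}$ as $\lambda\to0$. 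Your upper comparison $[\lambda c/2,2\lambda c,\dots]$ therefore tends to $2$, and the lower one $[2\lambda c,\lambda c/2,\dots]$ tends to $1/2$. All you obtain is $\tfrac12\le\liminf_{\lambda\to0}G_N\le\limsup_{\lambda\to0}G_N\le2$, so continuity of $x\mapsto1/(a+x)$ at $(a,x)=(0,1)$ has nothing to act on. The small-$\lambda$ limit is governed by the ratio of neighbouring entries, not their size, which is why a fixed band is too crude.

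The repair, which is what the paper does, is to let the band shrink. For $\varepsilon\in(0,c)$, choose $N=N(\varepsilon)$ with $\beta_n\in[c-\varepsilon,c+\varepsilon]$ for $n>N$. The alternating sandwich then gives $\rho^{-1}\le\liminf_{\lambda\to0}G_N\le\limsup_{\lambda\to0}G_N\le\rho$ with $\rho=\sqrt{(c+\varepsilon)/(c-\varepsilon)}$; these are the paper's limits $R^\pm(\lambda)\to(c\pm\varepsilon)/\sqrt{c^2-\varepsilon^2}$. The interval $[\rho^{-1},\rho]$ is invariant under $x\mapsto1/x$, and $a_1,\dots,a_N\to0$. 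Hence the finitely many steps $G_{k-1}=1/(a_k+G_k)$ carry these liminf/limsup bounds down to $F=G_0$; the paper uses $f_N(0,z)=z$ for even $N$ here. Only then do you send $\varepsilon\to0$. You never get $G_N\to1$ for the fixed $N(\varepsilon)$ from this, so the argument must run through liminf/limsup rather than continuity at $1$.

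A smaller slip: $q_n\ge\min(1,\lambda c/2)(q_{n-1}+q_{n-2})$ gives growth only when the minimum exceeds $1/2$. Instead, compare by induction with $x_n=ax_{n-1}+x_{n-2}$, $x_{-1}=0$, $x_0=1$, $a=\lambda_1\inf_n\beta_n>0$. This sequence grows like $\bigl((a+\sqrt{a^2+4})/2\bigr)^n$. Your final tail-limit step is fine, because there the band does shrink with $\eta$.
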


The proof of the convergence is given in the appendix of \cite{friedlander1998instability}.
For completeness, we give a self-contained simple proof here.
We also prove the additional conclusion \eqref{fried_con1}.

\begin{proof}
    Note that a continued fraction $[a_1,a_2,\dots,a_n]$ with positive $a_i$ is monotonically increasing in $a_2,a_4,\dots$ and decreasing in $a_1,a_3,\dots$, so
    \begin{align*}
        0<F_2<F_4<F_6<\dots<F_5<F_3<F_1,
    \end{align*}
    hence $F_n$ with odd $n$ and even $n$ converge respectively. Denote
    \begin{align*}
        F_e(\lambda)&=\lim_{n\to\infty}F_{2n}(\lambda),&
        F_o(\lambda)&=\lim_{n\to\infty}F_{2n+1}(\lambda).
    \end{align*}
    To prove $F$ is well-defined, we need to show $F_e=F_o$.

    Given $0 < \e <c$, there exists a large even $N>0$ such that $0<c-\e<\beta_n<c+\e$ for all $n>N$. Let us introduce remainder bounds
    \begin{align*}
        R^-(\lambda)
        &=[\lambda(c+\e),\lambda(c-\e),\lambda(c+\e),
        \lambda(c-\e),\dots],\\
        R^+(\lambda)
        &=[\lambda(c-\e),\lambda(c+\e),\lambda(c-\e),
        \lambda(c+\e),\dots].
    \end{align*}
    They are continuous functions with the following explicit expressions:
    \begin{equation}\label{eq:R+-}
        \begin{aligned}
            R^\pm(\lambda)
            &:=\frac12\left(\sqrt{1+\frac4{\lambda^2(c^2-\e^2)}}-1\right)
                \cdot\lambda(c\pm\e)\\
            &=\frac12\left(\sqrt{\lambda^2(c\pm\e)^2+
            \frac{4(c\pm\e)^2}{c^2-\e^2}}-\lambda(c\pm\e)\right).
        \end{aligned}
    \end{equation}
    The computation is deferred to the end of the proof, which also proves convergence of these two continued fractions, using only the odd/even monotonicity above.
    For $n>N$, let us write
    \begin{align*}
        F_n(\lambda)=f_N(\lambda,z)
        &:=[\lambda\beta_1,\lambda\beta_2,\dots,
        \lambda\beta_N+z],\\
        \text{with }z&=[\lambda\beta_{N+1},\lambda\beta_{N+2},
        \dots,\lambda\beta_n].
    \end{align*}
    Here $z$ is the value of the remaining fraction. For even $N$, $f _N$ is increasing
    in $z$.
    By monotonicity,
    \begin{align}
        \label{eq:comparison}
        R^-(\lambda)
        &\le\liminf_{n\to\infty}
        [\lambda\beta_{N+1},\lambda\beta_{N+2},\dots,\lambda\beta_n]
        \notag\\
        &\le\limsup_{n\to\infty}
        [\lambda\beta_{N+1},\lambda\beta_{N+2},\dots,\lambda\beta_n]
        \le R^+(\lambda).
    \end{align}
    As a consequence,
    \begin{align}
        \label{eqn:squeeze-FeFo}
        f_N(\lambda,R^-(\lambda))
        \le F_e(\lambda)
        \le F_o(\lambda)
        \le f_N(\lambda,R^+(\lambda)).
    \end{align}
    Moreover, $f_N$ is $1$-Lipschitz in $z$. We begin with the two-entry calculation:
    \begin{align*}
        \frac{\d}{\d z}[a_1,z]
        &=\frac{\d}{\d z}\frac1{a_1+1/z}
        =\left(
        \frac1{a_1+1/z}
        \right)^2 \left(\frac1z\right)^2
        =\frac1{(a_1z+1)^2}<1.
    \end{align*}
    Similarly, a continued fraction is also 1-Lipschitz in the fourth entry:
    \begin{align*}
        \frac{\d}{\d z}[a_1,a_2,a_3,z]
        &=\frac{\d}{\d z}[a_1,a_2+[a_3,z]] = \frac{\d}{\d a _2} [a_1,a_2+[a_3,z]] \times \frac{\d}{\d z}[a_3,z] < 1 \times 1 = 1.
    \end{align*}
    Continuing this decomposition, we have 
    \begin{align*}
        \frac{\d}{\d z}[a_1,a_2,\dots,a_{N - 1},z] = \frac{\d}{\d a _2}[a_1,a_2,\dots,a_{N - 1},z] \times \frac{\d}{\d a _4}[a_3,a_4,\dots,a_{N - 1},z] \times \dots \times \frac{\d}{\d z}[a_{N - 1},z],
    \end{align*}
    which is smaller than 1. Hence a continued fraction is 1-Lipschitz in all the even entries. In particular, as $N$ is even, $f _N (\lambda, z)$ is 1-Lipschitz in $z$. Therefore, we get
    \begin{align*}
        F_o(\lambda)-F_e(\lambda)
        &\le
        f_N(\lambda,R^+(\lambda))-f_N(\lambda,R^-(\lambda))\\
        &\le R^+(\lambda)-R^-(\lambda)
        =\left(\sqrt{1+\frac4{\lambda^2(c^2-\e^2)}}-1\right)
          \cdot\lambda\e,
    \end{align*}
    as can be seen from the closed-form expression of $R ^\pm$. Since $\e$ can be taken arbitrarily small, we have $F_o(\lambda)=F_e(\lambda)$, so the limit $F(\lambda)$ exists. It is positive because $F(\lambda)\ge F_2(\lambda)>0$.

    Continuity of $F(\lambda)$ can be proven similarly. Fix $\e$, its corresponding even $N$, and $\lambda_1>0$. Using \eqref{eqn:squeeze-FeFo} and Lipschitz property of $f _N$ in $z$, we have for all $\lambda _2 > 0$ that
    \begin{align*}
        F(\lambda_2)-F(\lambda_1)
        &=F_o(\lambda_2)-F_e(\lambda_1)\\
        &\le f_N(\lambda_2,R^+(\lambda_2))
        -f_N(\lambda_1,R^-(\lambda_1))\\
        &= f_N(\lambda_2,R^+(\lambda_2))
        -f_N(\lambda_2,R^-(\lambda_1))
        +f_N(\lambda_2,R^-(\lambda_1))
        -f_N(\lambda_1,R^-(\lambda_1))\\
        &\le R^+(\lambda_2)-R^-(\lambda_1)
        +f_N(\lambda_2,R^-(\lambda_1))
        -f_N(\lambda_1,R^-(\lambda_1)).
    \end{align*}
    Now we take $\lambda_2\to\lambda_1$. Since both $R^\pm$ and $f _N$ are continuous in $\lambda$, we have
    \begin{align*}
        \limsup_{\lambda_2\to\lambda_1}
        \bigl(F(\lambda_2)-F(\lambda_1)\bigr)
        &\le R^+(\lambda_1)-R^-(\lambda_1)=\left(\sqrt{1+\frac4{\lambda_1^2(c^2-\e^2)}}-1\right)
          \cdot\lambda_1\e,
    \end{align*}
    which can be made arbitrarily small by taking $\e\to0$. With the same argument with liminf, we obtain
    \begin{align*}
        \liminf_{\lambda_2\to\lambda_1}
        \bigl(F(\lambda_2)-F(\lambda_1)\bigr)
        \ge-\bigl(R^+(\lambda_1)-R^-(\lambda_1)\bigr).
    \end{align*}
    Letting $\e\downarrow0$ in both bounds,  we have proven the continuity of $F$ in $\R_+$.

    To see the limit of $F (\lambda)$ as $\lambda\to0^+$, note that the following limit holds:
    \begin{align*}
        \lim_{\lambda\to0^+} R^\pm(\lambda)
        &=\lim_{\lambda\to0^+}\frac12\left(
        \sqrt{\lambda^2(c\pm\e)^2+
        \frac{4(c\pm\e)^2}{c^2-\e^2}}-\lambda(c\pm\e)\right)=\frac{c\pm\e}{\sqrt{c^2-\e^2}}.
    \end{align*}
    Moreover, for each fixed even $N$, we have
    \begin{align*}
        \lim_{\lambda\to0^+}f_N(\lambda,z)=f_N(0,z)=z.
    \end{align*}
    Hence we can pass $\lambda \to 0$ in \eqref{eqn:squeeze-FeFo}, and obtain
    \begin{align*}
        \frac{c-\e}{\sqrt{c^2-\e^2}}
        &\le \liminf _{\lambda\to0^+} F(\lambda)
        \le \limsup_{\lambda\to0^+} F(\lambda)
        \le\frac{c+\e}{\sqrt{c^2-\e^2}}.
    \end{align*}
    Taking $\e\to0$ yields $\lim_{\lambda\to0^+}F(\lambda)=1$. As for $\lambda\to+\infty$ we can simply use that $F_n(\lambda)\le F_1(\lambda)=1/(\lambda\beta_1)\to0$.

    Finally, we prove \eqref{fried_con1}. For each $n \ge 0$, the shifted sequence $\{\beta_{n + j}\}_{j\ge1}$ satisfies the same hypotheses. The convergence argument above therefore defines
    \begin{align*}
        \cF _n (\lambda) :=[\lambda\beta_{n+1}, \lambda\beta_{n+2}, \dots].
    \end{align*}
    Fix $\lambda>0$ and $0<\e<c$. For all $n \ge N$, every coefficient in this tail lies between $c-\e$ and $c+\e$. The same comparison as in \eqref{eq:comparison} gives
    \begin{align*}
        R^-(\lambda)\le \cF _n (\lambda)\le R^+(\lambda),
        \qquad n \ge N.
    \end{align*}
    Thus
    \[
        R^-(\lambda)
        \le\liminf_{n\to\infty} \cF _n (\lambda)
        \le\limsup_{n\to\infty} \cF _n (\lambda)
        \le R^+(\lambda).
    \]
    As $\e\downarrow0$, both $R ^\pm$ tend to $\frac12 (\sqrt{\lambda^2c^2+4}-\lambda c) =\sqrt{1+\left(\frac{\lambda c}{2}\right)^2} -\frac{\lambda c}{2}.$ This proves \eqref{fried_con1}.
\end{proof}

\begin{proof}[Proof of \cref{eq:R+-}]
    Define
    \begin{align*}
        R_n^-(\lambda)
        &:=[\lambda(c+\e),\lambda(c-\e),\lambda(c+\e),\dots,
        \lambda(c+(-1)^{n+1}\e)],\\
        R_n^+(\lambda)
        &:=[\lambda(c-\e),\lambda(c+\e),\lambda(c-\e),\dots,
        \lambda(c+(-1)^n\e)].
    \end{align*}
    We can similarly define $R_o^\pm(\lambda)$ and $R_e^\pm(\lambda)$ as the limit of odd and even terms. They satisfy the following relation:
    \begin{align*}
        R_o^-(\lambda)&=\frac1{\lambda(c+\e)+R_e^+(\lambda)},&
        R_e^+(\lambda)&=\frac1{\lambda(c-\e)+R_o^-(\lambda)}.
    \end{align*}
    Simplifying these equations, one first observes that
    \begin{align*}
        \lambda(c+\e)R_o^-(\lambda) =1-R_o^-(\lambda)R_e^+(\lambda) =\lambda(c-\e)R_e^+(\lambda)=:x.
    \end{align*}
    Multiplying $1-x$ by $\lambda^2(c^2-\e^2)$, we have
    \begin{align*}
        \lambda^2(c^2-\e^2)(1-x)
        &=\lambda^2(c^2-\e^2)R_o^-(\lambda)R_e^+(\lambda)=\lambda(c+\e)R_o^-(\lambda)
        \cdot\lambda(c-\e)R_e^+(\lambda)=x^2.
    \end{align*}
    The quadratic equation has only one positive root
    \begin{align*}
        x=\frac{\sqrt{b^2+4b}-b}{2} =\frac12\left(\sqrt{1+4/b}-1\right)b, \qquad b=\lambda^2(c^2-\e^2).
    \end{align*}
    Hence we can solve for $R_o^-$ and $R_e^+$ as
    \begin{align*}
        R_o^-(\lambda)
        &=\frac{x}{\lambda(c+\e)}
        =\frac12\left(\sqrt{1+\frac4{\lambda^2(c^2-\e^2)}}-1\right)
        \cdot\lambda(c-\e),\\
        R_e^+(\lambda)
        &=\frac{x}{\lambda(c-\e)}
        =\frac12\left(\sqrt{1+\frac4{\lambda^2(c^2-\e^2)}}-1\right)
        \cdot\lambda(c+\e).
    \end{align*}
    $(R_o^-,R_e^+)$ solves the same system as $(R_e^-,R_o^+)$, and the positive solution is unique, so $R_o^\pm=R_e^\pm$, therefore we can omit the subscript and conclude \eqref{eq:R+-}.
\end{proof}

\begin{remark}
    \label{rmk:Fried_con}
    In the calculation for sinusoidal profile in \cref{sec:Unstable_e_function}, the positive coefficients are $\beta_{-6}, \beta_{-11}, \dots$, with $\beta_{-1-5n} \to \frac2{15}$ as $n \to \infty$. Set $\tilde\beta_n:=\beta_{-1-5n}$, $j\ge1$. For $n=-1-5\tilde n$, we have
    \begin{align*}
        [\lambda \beta_{n-5}, \lambda \beta_{n-10}, \dots]
        = [\lambda \tilde \beta_{\tilde n+1}, 
          \lambda \tilde \beta_{\tilde n+2},\dots].
    \end{align*}
    Applying \eqref{fried_con1} with $c=\frac2{15}$ gives the desired formula
    \begin{align*}
        \lim_{\substack{n \to -\infty \\ n\equiv4\;({\rm mod}\;5)}}
        [\lambda \beta_{n-5}, \lambda \beta_{n-10},\dots]
        =\sqrt{1+\left(\frac{\lambda}{15}\right)^2}
         -\frac{\lambda}{15}, \qquad \lambda > 0.
    \end{align*}
\end{remark}
\scriptsize
\bibliographystyle{plain}
\bibliography{reference/citation}

\begin{bibdiv}
\begin{biblist}

\bib{Acheson1990}{book}{
      author={Acheson, D.~J.},
       title={Elementary fluid dynamics},
   publisher={Oxford University Press},
        date={1990},
}

\bib{AlbrittonEtAl2024Vishik}{book}{
      author={Albritton, Dallas},
      author={Bru{\'e}, Elia},
      author={Colombo, Maria},
      author={De~Lellis, Camillo},
      author={Giri, Vikram},
      author={Janisch, Maximilian},
      author={Kwon, Hyunju},
       title={Instability and non-uniqueness for the 2{D} {E}uler equations,
  after {M}. {V}ishik},
      series={Annals of Mathematics Studies},
   publisher={Princeton University Press},
     address={Princeton, NJ},
        date={2024},
      volume={219},
        ISBN={978-0-691-25752-5},
}

\bib{bardos2002stable}{article}{
      author={Bardos, C.},
      author={Guo, Y.},
      author={Strauss, W.},
       title={Stable and unstable ideal plane flows},
        date={2002},
        ISSN={0252-9599,1860-6261},
     journal={Chinese Ann. Math. Ser. B},
      volume={23},
      number={2},
       pages={149\ndash 164},
         url={https://doi.org/10.1142/S0252959902000158},
        note={Dedicated to the memory of Jacques-Louis Lions},
}

\bib{Bardos2013}{article}{
      author={Bardos, Claude~W.},
      author={Titi, Edriss~S.},
       title={Mathematics and turbulence: where do we stand?},
        date={2013},
     journal={J. Turbul.},
      volume={14},
      number={3},
       pages={42\ndash 76},
         url={https://doi.org/10.1080/14685248.2013.771838},
}

\bib{Batchelor1967}{book}{
      author={Batchelor, G.~K.},
       title={An introduction to fluid dynamics},
   publisher={Cambridge University Press},
     address={Cambridge},
        date={1967},
}

\bib{BedrossianHeIyerWang2024}{misc}{
      author={Bedrossian, Jacob},
      author={He, Siming},
      author={Iyer, Sameer},
      author={Wang, Fei},
       title={Uniform inviscid damping and inviscid limit of the {2D}
  {Navier--Stokes} equation with {Navier} boundary conditions},
         how={arXiv preprint arXiv:2405.19249},
        date={2024},
         url={https://arxiv.org/abs/2405.19249v1},
        note={Version 1, 29 May 2024},
}

\bib{BelenkayaFriedlanderYudovich1999}{article}{
      author={Belenkaya, L.},
      author={Friedlander, Susan},
      author={Yudovich, V.},
       title={The unstable spectrum of oscillating shear flows},
        date={1999},
     journal={SIAM Journal on Applied Mathematics},
      volume={59},
      number={5},
       pages={1701\ndash 1715},
}

\bib{BonaWu2002}{article}{
      author={Bona, Jerry~L.},
      author={Wu, Jiahong},
       title={The zero-viscosity limit of the {2D} {Navier--Stokes} equations},
        date={2002},
     journal={Studies in Applied Mathematics},
      volume={109},
      number={4},
       pages={265\ndash 278},
         url={https://doi.org/10.1111/1467-9590.t01-1-00223},
}

\bib{CaflischSammartino2006}{inproceedings}{
      author={Caflisch, R.~E.},
      author={Sammartino, M.},
       title={Vortex layers in the small viscosity limit},
        date={2006},
   booktitle={Waves and stability in continuous media: Proceedings of the 13th
  conference on {WASCOM} 2005},
      editor={Monaco, Roberto},
      editor={Mulone, Giuseppe},
      editor={Rionero, Salvatore},
      editor={Ruggeri, Tommaso},
   publisher={World Scientific},
     address={Hackensack, NJ},
       pages={59\ndash 70},
         url={https://doi.org/10.1142/9789812773616_0010},
}

\bib{Chemin1996}{article}{
      author={Chemin, Jean-Yves},
       title={A remark on the inviscid limit for two-dimensional incompressible
  fluids},
        date={1996},
     journal={Communications in Partial Differential Equations},
      volume={21},
      number={11--12},
       pages={1771\ndash 1779},
}

\bib{ConstantinDrivasElgindi2022}{article}{
      author={Constantin, Peter},
      author={Drivas, Theodore~D.},
      author={Elgindi, Tarek~M.},
       title={Inviscid limit of vorticity distributions in the {Yudovich}
  class},
        date={2022},
     journal={Communications on Pure and Applied Mathematics},
      volume={75},
      number={1},
       pages={60\ndash 82},
}

\bib{Constantin1988navier}{book}{
      author={Constantin, Peter},
      author={Foia{\c{s}}, Ciprian},
       title={{N}avier--{S}tokes equations},
   publisher={University of Chicago press},
        date={1988},
}

\bib{ConstantinKukavicaVicol2015}{article}{
      author={Constantin, Peter},
      author={Kukavica, Igor},
      author={Vicol, Vlad},
       title={On the inviscid limit of the {Navier--Stokes} equations},
        date={2015},
     journal={Proceedings of the American Mathematical Society},
      volume={143},
      number={7},
       pages={3075\ndash 3090},
         url={https://doi.org/10.1090/S0002-9939-2015-12638-X},
}

\bib{coti2023stationary}{article}{
      author={Coti~Zelati, Michele},
      author={Elgindi, Tarek~M},
      author={Widmayer, Klaus},
       title={Stationary structures near the {K}olmogorov and poiseuille flows
  in the 2{D} {E}uler equations},
        date={2023},
     journal={Archive for Rational Mechanics and Analysis},
      volume={247},
      number={1},
       pages={12},
}

\bib{elgindi2026flexibility}{article}{
      author={Elgindi, Tarek~M},
      author={Huang, Yupei},
       title={On the flexibility of 2{D} {Euler} steady states},
        date={2026},
     journal={arXiv preprint arXiv:2604.12962},
}

\bib{filho2005}{article}{
      author={Filho, M. C.~Lopes},
      author={Lopes, H. J.~Nussenzveig},
      author={Planas, G.},
       title={On the inviscid limit for two-dimensional incompressible flow
  with navier friction condition},
        date={2005},
     journal={SIAM Journal on Mathematical Analysis},
      volume={36},
      number={4},
       pages={1130\ndash 1141},
         url={https://doi.org/10.1137/S0036141003432341},
}

\bib{friedlander1998instability}{article}{
      author={Friedlander, Susan},
      author={Howard, Louis},
       title={Instability in parallel flows revisited},
        date={1998},
     journal={Studies in Applied Mathematics},
      volume={101},
      number={1},
       pages={1\ndash 21},
}

\bib{FriedlanderStraussVishik1997}{article}{
      author={Friedlander, Susan},
      author={Strauss, Walter},
      author={Vishik, Misha},
       title={Nonlinear instability in an ideal fluid},
        date={1997},
     journal={Annales de l'Institut Henri Poincar{\'e} C, Analyse non
  lin{\'e}aire},
      volume={14},
      number={2},
       pages={187\ndash 209},
}

\bib{FriedlanderStraussVishik1999}{article}{
      author={Friedlander, Susan},
      author={Strauss, Walter},
      author={Vishik, Misha},
       title={Robustness of instability for the two-dimensional {Euler}
  equations},
        date={1999},
     journal={SIAM Journal on Mathematical Analysis},
      volume={30},
      number={6},
       pages={1343\ndash 1354},
}

\bib{GerardVaretMaekawaMasmoudi2018}{article}{
      author={G{\'e}rard-Varet, David},
      author={Maekawa, Yasunori},
      author={Masmoudi, Nader},
       title={Gevrey stability of {Prandtl} expansions for 2-dimensional
  {Navier--Stokes} flows},
        date={2018},
     journal={Duke Mathematical Journal},
      volume={167},
      number={13},
       pages={2531\ndash 2631},
         url={https://doi.org/10.1215/00127094-2018-0020},
}

\bib{GerardVaretNguyen2012}{article}{
      author={G{\'e}rard-Varet, David},
      author={Nguyen, Toan},
       title={Remarks on the ill-posedness of the {Prandtl} equation},
        date={2012},
     journal={Asymptotic Analysis},
      volume={77},
      number={1-2},
       pages={71\ndash 88},
         url={https://doi.org/10.3233/ASY-2011-1075},
}

\bib{GieEtAl2019}{article}{
      author={Gie, Gung-Min},
      author={Kelliher, James~P.},
      author={Lopes~Filho, Milton~C.},
      author={Mazzucato, Anna~L.},
      author={Nussenzveig~Lopes, Helena~J.},
       title={The vanishing viscosity limit for some symmetric flows},
        date={2019},
     journal={Annales de l'Institut Henri Poincar{\'e} C, Analyse non
  lin{\'e}aire},
      volume={36},
      number={5},
       pages={1237\ndash 1280},
         url={https://doi.org/10.1016/j.anihpc.2018.11.006},
}

\bib{grenier2000nonlinear}{article}{
      author={Grenier, Emmanuel},
       title={On the nonlinear instability of {Euler} and {Prandtl} equations},
        date={2000},
     journal={Communications on Pure and Applied Mathematics},
      volume={53},
      number={9},
       pages={1067\ndash 1091},
}

\bib{GrenierGuoNguyen2016}{article}{
      author={Grenier, Emmanuel},
      author={Guo, Yan},
      author={Nguyen, Toan~T.},
       title={Spectral instability of characteristic boundary layer flows},
        date={2016},
     journal={Duke Mathematical Journal},
      volume={165},
      number={16},
       pages={3085\ndash 3146},
         url={https://doi.org/10.1215/00127094-3645437},
}

\bib{kato1995perturbation}{book}{
      author={Kato, T.},
       title={Perturbation theory for linear operators},
      series={Classics in Mathematics},
   publisher={Springer Berlin Heidelberg},
        date={1995},
        ISBN={9783540586616},
         url={https://books.google.com/books?id=8ji2kN_D3BwC},
}

\bib{Kato1984}{incollection}{
      author={Kato, Tosio},
       title={Remarks on zero viscosity limit for nonstationary
  {N}avier--{S}tokes flows with boundary},
        date={1984},
   booktitle={Seminar on nonlinear partial differential equations},
      editor={Chern, S.~S.},
      series={Mathematical Sciences Research Institute Publications},
      volume={2},
   publisher={Springer},
     address={New York, NY},
       pages={85\ndash 98},
}

\bib{Kelliher2006Navier}{article}{
      author={Kelliher, James~P.},
       title={{Navier--Stokes} equations with {Navier} boundary conditions for
  a bounded domain in the plane},
        date={2006},
     journal={SIAM Journal on Mathematical Analysis},
      volume={38},
      number={1},
       pages={210\ndash 232},
         url={https://doi.org/10.1137/040612336},
}

\bib{Kelliher2007Kato}{article}{
      author={Kelliher, James~P.},
       title={On {Kato}'s conditions for vanishing viscosity},
        date={2007},
     journal={Indiana University Mathematics Journal},
      volume={56},
      number={4},
       pages={1711\ndash 1721},
         url={https://doi.org/10.1512/iumj.2007.56.3080},
}

\bib{KiselevSverak2014}{article}{
      author={Kiselev, Alexander},
      author={{\v S}ver{\'a}k, Vladimir},
       title={Small scale creation for solutions of the incompressible
  two-dimensional {E}uler equation},
        date={2014},
     journal={Annals of Mathematics},
      volume={180},
      number={3},
       pages={1205\ndash 1220},
}

\bib{KukavicaVicolWang2017}{article}{
      author={Kukavica, Igor},
      author={Vicol, Vlad},
      author={Wang, Fei},
       title={The {van Dommelen} and {Shen} singularity in the {Prandtl}
  equations},
        date={2017},
     journal={Advances in Mathematics},
      volume={307},
       pages={288\ndash 311},
         url={https://doi.org/10.1016/j.aim.2016.11.013},
}

\bib{LacaveMazzucato2016}{article}{
      author={Lacave, Christophe},
      author={Mazzucato, Anna~L.},
       title={The vanishing viscosity limit in the presence of a porous
  medium},
        date={2016},
     journal={Mathematische Annalen},
      volume={365},
      number={3-4},
       pages={1527\ndash 1557},
         url={https://doi.org/10.1007/s00208-015-1313-x},
}

\bib{LandauLifshitz1987}{book}{
      author={Landau, L.~D.},
      author={Lifshitz, E.~M.},
       title={Fluid mechanics},
     edition={2},
      series={Course of Theoretical Physics},
   publisher={Pergamon Press},
     address={Oxford},
        date={1987},
      volume={6},
}

\bib{LeiShi2016}{article}{
      author={Lei, Zhen},
      author={Shi, Jia},
       title={Infinite-time exponential growth of the {E}uler equation on
  two-dimensional torus},
        date={2016},
     journal={arXiv preprint arXiv:1608.07010},
         url={https://arxiv.org/abs/1608.07010},
}

\bib{Lin2003}{article}{
      author={Lin, Zhiwu},
       title={Instability of some ideal plane flows},
        date={2003},
     journal={SIAM Journal on Mathematical Analysis},
      volume={35},
      number={2},
       pages={318\ndash 356},
}

\bib{lin2004nonlinear}{article}{
      author={Lin, Zhiwu},
       title={Nonlinear instability of ideal plane flows},
        date={2004},
     journal={International Mathematics Research Notices},
      volume={2004},
      number={41},
       pages={2147\ndash 2178},
}

\bib{lin2011inviscid}{article}{
      author={Lin, Zhiwu},
      author={Zeng, Chongchun},
       title={Inviscid dynamical structures near couette flow},
        date={2011},
     journal={Archive for rational mechanics and analysis},
      volume={200},
      number={3},
       pages={1075\ndash 1097},
}

\bib{lin2013unstable}{article}{
      author={Lin, Zhiwu},
      author={Zeng, Chongchun},
       title={Unstable manifolds of {E}uler equations},
        date={2013},
     journal={Communications on Pure and Applied Mathematics},
      volume={66},
      number={11},
       pages={1803\ndash 1836},
}

\bib{LinZeng2022}{article}{
      author={Lin, Zhiwu},
      author={Zeng, Chongchun},
       title={Instability, index theorem, and exponential trichotomy for linear
  {Hamiltonian PDEs}},
        date={2022},
     journal={Memoirs of the American Mathematical Society},
      volume={275},
      number={1347},
}

\bib{Lions1996}{book}{
      author={Lions, Pierre-Louis},
       title={Mathematical topics in fluid mechanics. vol.~1: Incompressible
  models},
      series={Oxford Lecture Series in Mathematics and its Applications},
   publisher={The Clarendon Press, Oxford University Press},
     address={New York},
        date={1996},
      number={3},
        ISBN={9780198514879},
  url={https://global.oup.com/academic/product/mathematical-topics-in-fluid-mechanics-9780198514879},
}

\bib{LiuZheng1999}{book}{
      author={Liu, Zhuangyi},
      author={Zheng, Songmu},
       title={Semigroups associated with dissipative systems},
      series={Research Notes in Mathematics},
   publisher={Chapman \& Hall/CRC},
     address={Boca Raton, FL},
        date={1999},
      volume={398},
}

\bib{Maekawa2014}{article}{
      author={Maekawa, Yasunori},
       title={On the inviscid limit problem of the vorticity equations for
  viscous incompressible flows in the half-plane},
        date={2014},
     journal={Communications on Pure and Applied Mathematics},
      volume={67},
      number={7},
       pages={1045\ndash 1128},
         url={https://doi.org/10.1002/cpa.21516},
}

\bib{maekawa2018}{article}{
      author={Maekawa, Yasunori},
      author={Mazzucato, Anna~L.},
       title={The inviscid limit and boundary layers for {N}avier--{S}tokes
  flows},
        date={2018},
       pages={781\ndash 828},
}

\bib{Masmoudi2007remarks}{article}{
      author={Masmoudi, Nader},
       title={Remarks about the inviscid limit of the {N}avier--{S}tokes
  system},
        date={2007},
     journal={Communications in mathematical physics},
      volume={270},
      number={3},
       pages={777\ndash 788},
}

\bib{MazzucatoTaylor2008}{article}{
      author={Mazzucato, Anna~L.},
      author={Taylor, Michael~E.},
       title={Vanishing viscosity plane parallel channel flow and related
  singular perturbation problems},
        date={2008},
     journal={Analysis \& PDE},
      volume={1},
      number={1},
       pages={35\ndash 93},
         url={https://doi.org/10.2140/apde.2008.1.35},
}

\bib{MeshalkinSinai1961}{article}{
      author={Meshalkin, L.~D.},
      author={Sinai, Ya.~G.},
       title={Investigation of the stability of a stationary solution of a
  system of equations for the plane movement of an incompressible viscous
  liquid},
        date={1961},
     journal={Journal of Applied Mathematics and Mechanics},
      volume={25},
      number={6},
       pages={1700\ndash 1705},
}

\bib{SammartinoCaflisch1998II}{article}{
      author={Sammartino, Marco},
      author={Caflisch, Russel~E.},
       title={Zero viscosity limit for analytic solutions of the
  {Navier--Stokes} equation on a half-space. {II}. construction of the
  {Navier--Stokes} solution},
        date={1998},
     journal={Communications in Mathematical Physics},
      volume={192},
       pages={463\ndash 491},
         url={https://doi.org/10.1007/s002200050305},
}

\bib{Seis2021}{article}{
      author={Seis, Christian},
       title={A note on the vanishing viscosity limit in the {Yudovich} class},
        date={2021},
     journal={Canadian Mathematical Bulletin},
      volume={64},
      number={1},
       pages={112\ndash 122},
}

\bib{ShvydkoyFriedlander2008}{article}{
      author={Shvydkoy, Roman},
      author={Friedlander, Susan},
       title={The unstable spectrum of the {N}avier--{S}tokes operator in the
  limit of vanishing viscosity},
        date={2008},
     journal={Ann. Inst. H. Poincar\'e C Anal. Non Lin\'eaire},
      volume={25},
      number={4},
       pages={713\ndash 724},
}

\bib{ShvydkoyLatushkin2005}{article}{
      author={Shvydkoy, Roman},
      author={Latushkin, Yuri},
       title={Essential spectrum of the linearized {2D Euler} equation and
  {Lyapunov--Oseledets} exponents},
        date={2005},
     journal={Journal of Mathematical Fluid Mechanics},
      volume={7},
      number={2},
       pages={164\ndash 178},
         url={https://doi.org/10.1007/s00021-004-0114-x},
}

\bib{vasseur2023}{article}{
      author={Vasseur, Alexis~F.},
      author={Yang, Jincheng},
       title={Boundary vorticity estimates for {N}avier--{S}tokes and
  application to the inviscid limit},
        date={2023},
     journal={SIAM Journal on Mathematical Analysis},
      volume={55},
      number={4},
       pages={3081\ndash 3107},
}

\bib{vasseur2024}{article}{
      author={Vasseur, Alexis~F.},
      author={Yang, Jincheng},
       title={Layer separation of the 3{D} incompressible {N}avier--{S}tokes
  equation in a bounded domain},
        date={2024},
     journal={Communications in Partial Differential Equations},
      volume={49},
      number={4},
       pages={381\ndash 409},
}

\bib{Vishik2018PartI}{article}{
      author={Vishik, Misha},
       title={Instability and non-uniqueness in the {C}auchy problem for the
  {E}uler equations of an ideal incompressible fluid. {P}art {I}},
        date={2018},
     journal={arXiv preprint arXiv:1805.09426},
}

\bib{Vishik2018PartII}{article}{
      author={Vishik, Misha},
       title={Instability and non-uniqueness in the {C}auchy problem for the
  {E}uler equations of an ideal incompressible fluid. {P}art {II}},
        date={2018},
     journal={arXiv preprint arXiv:1805.09440},
}

\end{biblist}
\end{bibdiv}
\end{document}